\begin{document}
\title{The space of circular planar electrical networks}
\author{
Richard W. Kenyon\thanks{Brown University.  Research partially supported by Microsoft Research and partially supported by NSF grant DMS-1208191 and Simons Foundation grant 327929.}
\and
David B.\! Wilson\thanks{Microsoft Research.}
}
\headers{The space of circular planar electrical networks}{Richard W. Kenyon and David B.\! Wilson}
\maketitle

\newcommand{\qedhere}{}

\newtheorem{conjecture}{Conjecture}
\newcommand{\Z}{\mathbb{Z}}
\newcommand{\Q}{\mathbb{Q}}
\newcommand{\R}{\mathbb{R}}
\renewcommand{\C}{\mathbb{C}}
\newcommand{\N}{\mathbb{N}}
\renewcommand{\G}{\mathcal{G}}
\newcommand{\BP}{\text{BP}}
\newcommand{\ZZ}{\mathcal{Z}}
\newcommand{\Pf}{\mathrm{Pf}}
\newcommand{\No}{\mathcal{N}}
\newcommand{\eps}{\epsilon}
\newcommand{\ep}{\varepsilon}
\newcommand{\old}[1]{}
\renewcommand{\t}{{\text t}}
\newcommand{\sym}{\text{sym}}
\newcommand{\asym}{\text{asym}}
\newcommand{\be}{\begin{equation}}
\newcommand{\ee}{\end{equation}}
\newcommand{\w}{{\mathrm w}}
\renewcommand{\b}{{\mathrm b}}
\newcommand{\sign}{\operatorname{sign}}
\newcommand{\note}[1]{{\color{red} #1}}
\newcommand{\Pu}{\ddddot\Pr}
\newcommand{\Pfd}{\operatorname{Pfd}}
\newcommand{\IG}{\operatorname{IG}}
\newcommand{\CM}{\mathrm{CM}}
\newcommand{\AD}{\mathrm{AD}}
\newcommand{\TAD}{\mathrm{TAD}}
\newcommand{\td}[1]{{\color{red}{\textbf {[#1]}}}}
\newcommand{\red}[1]{{\color{red} #1}}
\newcommand{\green}[1]{{\color{black!50!green} #1}}
\newcommand{\blue}[1]{{\color{blue} #1}}

\newcommand{\minor}[2]{\begin{tikzpicture}[scale=0.6,baseline=0cm-2.5pt]
    \foreach \x in {1,...,#1} { \coordinate (\x) at ({1.*cos(\x*360/#1)},{1.*sin(\x*360/#1)});}
    \draw[fill=yellow,draw=none] (0,0) circle(1.0);
    \draw [thick] \foreach \x/\y in {#2} {(\x)--(\y)};
    \foreach \x in {1,...,#1} {\node [circle,fill=orange!60!yellow,inner sep=.3pt] at (\x) {$\scriptstyle\x$};}
\end{tikzpicture}}

\newcommand{\minorsize}[3]{\begin{tikzpicture}[scale=#3,baseline=0cm-2.5pt]
    \foreach \x in {1,...,#1} { \coordinate (\x) at ({1.*cos(\x*360/#1)},{1.*sin(\x*360/#1)});}
    \draw[fill=yellow,draw=none] (0,0) circle(1.0);
    \draw [thick] \foreach \x/\y in {#2} {(\x)--(\y)};
    \foreach \x in {1,...,#1} {\node [circle,fill=orange!60!yellow,inner sep=.3pt] at (\x) {\scalebox{0.9}{$\scriptstyle\x$}};}
    \foreach \x/\y in {#2} {
      \node [circle,fill=green!50!yellow,inner sep=.2pt,draw] at (\x) {\scalebox{0.9}{$\scriptstyle\x$}};
      \node [circle,fill=red!0!white,inner sep=.2pt,draw] at (\y) {\scalebox{0.9}{$\scriptstyle\y$}};
     };
\end{tikzpicture}}
\newcommand{\minors}[2]{\minorsize{#1}{#2}{0.45}}
\newcommand{\minorsold}[2]{\begin{tikzpicture}[scale=0.45,baseline=0cm-2.5pt]
    \foreach \x in {1,...,#1} { \coordinate (\x) at ({1.*cos(\x*360/#1)},{1.*sin(\x*360/#1)});}
    \draw[fill=yellow,draw=none] (0,0) circle(1.0);
    \draw [thick] \foreach \x/\y in {#2} {(\x)--(\y)};
    \foreach \x in {1,...,#1} {\node [circle,fill=orange!60!yellow,inner sep=.3pt] at (\x) {\scalebox{0.9}{$\scriptstyle\x$}};}
    \foreach \x/\y in {#2} {
      \node [circle,fill=green!50!yellow,inner sep=.2pt,draw] at (\x) {\scalebox{0.9}{$\scriptstyle\x$}};
      \node [circle,fill=red!0!white,inner sep=.2pt,draw] at (\y) {\scalebox{0.9}{$\scriptstyle\y$}};
     };
\end{tikzpicture}}

\newcommand{\minorat}[3]{\begin{scope}[shift={#3},scale=0.35]
    \foreach \x in {1,...,#1} { \coordinate (\x) at ({1.*cos(\x*360/#1)},{1.*sin(\x*360/#1)});}
    \draw[fill=yellow,draw=none] (0,0) circle(1.0);
    \draw [thick] \foreach \x/\y in {#2} {(\x)--(\y)};
    \foreach \x in {1,...,#1} {\node [circle,fill=orange!60!yellow,inner sep=.2pt] at (\x) {\scalebox{0.7}{$\scriptstyle\x$}};}
    \foreach \x/\y in {#2} {
      \node [circle,fill=green!50!yellow,inner sep=.2pt,draw] at (\x) {\scalebox{0.7}{$\scriptstyle\x$}};
      \node [circle,fill=red!0!white,inner sep=.2pt,draw] at (\y) {\scalebox{0.7}{$\scriptstyle\y$}};
     };
\end{scope}}
\newcommand{\minorats}[3]{\begin{scope}[shift={#3},scale=0.35]
    \foreach \x in {1,...,#1} { \coordinate (\x) at ({1.*cos(\x*360/#1)},{1.*sin(\x*360/#1)});}
    \draw[fill=yellow,draw=none] (0,0) circle(1.0);
    \draw [thick] \foreach \x/\y in {#2} {(\x)--(\y)};
    \foreach \x in {1,...,#1} {\draw[fill=orange!60!yellow,draw=none] (\x) circle(0.2);}
    \foreach \x/\y in {#2} {
      \draw [fill=green!50!yellow,draw=none] (\x) circle(0.2);
      \draw [fill=white,draw=none] (\y) circle(0.2);
     };
\end{scope}}

\def\rcs $#1: #2 ${\expandafter\def\csname rcs#1\endcsname {#2}}
\rcs $Date: 2014/11/20 03:29:10 $

\maketitle
\begin{abstract}
  We discuss several parametrizations of the space of circular planar electrical networks.
  For any circular planar network we associate a canonical minimal
  network with the same response matrix, called a ``standard'' network.
  The conductances of edges in a standard network can be computed
  as a biratio of Pfaffians constructed from the response matrix.
  The conductances serve as coordinates that are compatible with the cell structure of
  circular planar networks in the sense that one conductance
  degenerates to $0$ or $\infty$ when moving from a cell to a boundary
  cell.

  We also show how to test if a network with $n$ nodes is well-connected
  by checking that $\binom{n}{2}$ minors of
  the $n\times n$ response matrix are positive;
  Colin de Verdi\`ere had previously
  shown that it was sufficient to check the positivity of
  exponentially many minors.  For standard networks with $m$ edges, positivity of
  the conductances can be tested by checking the positivity of $m+1$ Pfaffians.
\end{abstract}

\section{Introduction}

A \textbf{circular planar network} (CPN, or simply \textbf{network} in this
paper) is a finite graph $\G=(V,E)$ embedded in the plane with a
distinguished set of vertices $\No\subset V$, called nodes, on the outer
face, and a positive real-valued function $c:E\to\R_{>0}$ on the
edges.  The value~$c(e)$ is the \textit{conductance\/} of the edge.

On a network, the Laplacian operator $\Delta:\R^V\to\R^V$ is defined by
$\Delta(f)(v)=\sum_{v'} c_{v,v'}(f(v)-f(v'))$ where the sum is over
neighbors $v'$ of $v$.

Given a network and a function $u$ on $\No$, let $f$ be the unique
harmonic extension to $V$ of~$u$, and define $L(u)=-\Delta
f\big|_{\No}$.  The linear function $L:\R^{\No}\to\R^{\No}$ is the
\textbf{response matrix} of the network.  It is a symmetric,
negative semidefinite matrix with (if $\G$ is connected) kernel
consisting of the constant functions \cite{CdV}.

Circular planar networks arise in a number of different situations: in
electrical impedance tomography \cite{Borceaetal}, in the study of the
(positive part of the) orthogonal Grassmannian \cite{LP,ALT},
in statistical mechanics \cite{KW1}, in probability \cite{kw:annular}, and
in string theory (see e.g., \cite{huang-wen-xie,kim-lee}).

Their systematic study was first begun by Colin de Verdi\`ere \cite{CdV}
and Curtis, Ingerman, Mooers, and Morrow \cite{cmm,CIM},
who studied the space $\Omega_n$
of response matrices of all networks with $n$ nodes,
proving that it is a semialgebraic set whose interior is homeomorphic to a ball of dimension $n(n-1)/2$.
They showed that $\Omega_n$ was defined by inequalities $\det L_A^B\ge0$, where $A,B\subset\No$ run over
\textbf{noninterlaced} subsets of $\No$,
i.e., $A$ and $B$ are contained in disjoint intervals in the natural circular ordering of $\No$,
and $L_A^B$ is the minor of $L$ with rows~$A$ and columns~$B$.
(We use the term \textit{minor\/} to refer to 
both a submatrix and its determinant, when there is no chance of confusion.)

For minimal, well-connected networks (see definitions below)
it was shown that the edge conductances parametrize the interior $\Omega^+_n$ of $\Omega_n$.

The boundary of $\Omega_n$ has an interesting combinatorial structure.
Lam and Pylyavskyy showed that $\Omega_n$
has the structure of a cell complex \cite{LP}.
The cells are parametrized by equivalence
classes of minimal networks; two minimal networks are equivalent if they can be obtained from one another by
Y-$\Delta$ moves. The dimension of the cell is equal to the number of edges of the minimal network.
They conjectured that the cell structure is actually a regular CW complex, that is,
the closure of each cell is homeomorphic to a closed ball, and the
attaching maps from an $m$-cell to an $(m-1)$-cell arise by sending an appropriate conductance to $0$ or~$\infty$.
Recently Lam showed that the cell complex is Eulerian \cite{lam:eulerian}.

Our main goal is to understand the correspondence between response matrices and minimal networks.
We accomplish this by parameterizing in a canonical way each cell, and using it to give an explicit reconstruction map
(from the matrix to the conductances).
There are still some open problems pertaining to the determination of which cell a response matrix belongs to.

For each cell of dimension $m$, we define a canonical minimal network
in its corresponding network equivalence class, called a
\textbf{standard network}.  This definition is closely related to the
construction \cite{huang-wen}, however we found our formulation better
suited to our needs.
We associate to a standard minimal network a set of variables, the \textbf{tripod variables}
which can be computed in terms of Pfaffians and determinants involving
the response matrix.
The conductances are biratios of tripod variables at the adjacent vertices and faces.
The \textbf{reconstruction map} from the response matrix to the set of
conductances is given as an explicit rational function in these variables.  We previously
gave this explicit rational function in the case of well-connected
networks \cite{KW2}, and a recursive reconstruction procedure was
previously given in \cite{CIM} and studied further in
\cite{card-muranaka,russell} (see also \cite{johnson}).

We associate to a (standard or nonstandard) minimal network a set
of related variables called \textbf{$\mathbf B$ variables} at each face and vertex.  Like the tripod
variables, the
conductances are biratios of the $B$ variables at the adjacent
vertices and faces.  The $B$ variables transform under Y-$\Delta$
moves via the \textbf{cube recurrence} \cite{CS,GK}. 

The tripod variables and $B$ variables parameterize the same spaces but have different advantages:
$B$ variables can be defined for any network, but are harder to compute in terms of the response matrix;
tripod variables can be directly computed from the response matrix but are defined only for standard networks.

Another way to parametrize well-connected $n$-node networks is through a
collection of $\binom{n}{2}$ \textbf{central minors} of the $L$
matrix.  Positivity of the central minors implies positivity of all
noninterlaced minors.  This result is analogous to testing whether an
$n\times n$ matrix is totally positive by testing only $n^2$ minors
(see \cite{MR1745560}), but here there are $n$ nodes each of which can
index either a row or a column, while for the total positivity tests,
there are $n$ nodes that index rows and another $n$ nodes that index
columns.

Similar parametrizations (using minors of the $L$ matrix) seem to hold
for general minimal networks, but it is an open problem to find such
parametrizations in general; see \cite{ALT} for some work in this
direction.
\medskip

\noindent{\bf Acknowledgements.} We thank the referees for helpful comments and a careful review of the paper.

\section{Background}

Background in this section comes from \cite{CdV, curtis-morrow, kenyon:surfaces, lam-williams}.

\subsection{Dual network}

Given a circular planar network $\G$ with $n$ nodes, the \textbf{dual network}
is the network $\G^*$ with $n$ nodes, with a node between every two
adjacent nodes of $\G$, constructed from the dual graph of $\G$
embedded in the disk.  (That is, the vertices of $\G^*$ are the
regions of the disk which are bounded by edges of $\G$ or the boundary
of the disk, see Figure~\ref{strands-network}.)  The conductances of
an edge and its dual edge are reciprocals.

\subsection{Equivalence}

Two networks are \textbf{topologically equivalent} if they have the same number of nodes and one
can be obtained from the other by \textbf{electrical transformations}, see Figure~\ref{ETs}, disregarding
conductances.

\begin{figure}[htbp]
\includegraphics[width=\textwidth]{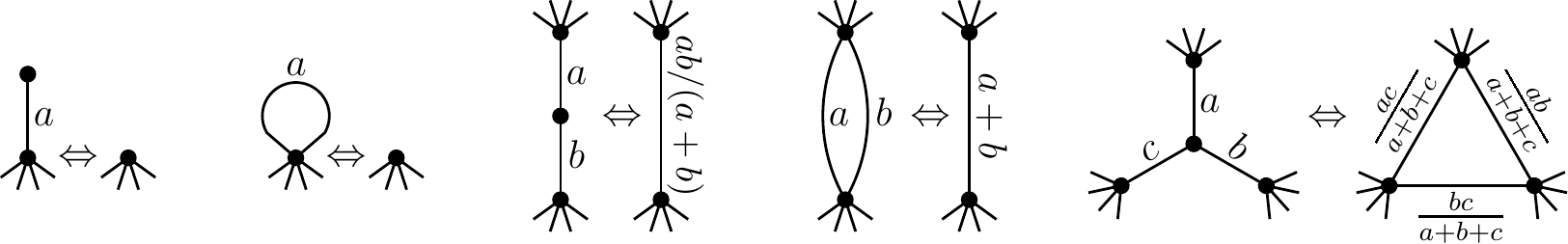}
\caption{\label{ETs}The electrical transformations:
removing a dead branch (a degree-$1$ non-node vertex),
removing a self-loop,
combining edges in series (when the central vertex is not a node),
combining edges in parallel,
and a Y-$\Delta$ transformation (when the central vertex is not a node).
This figure first appeared in \cite{KW2}.
}
\end{figure}

A network is \textbf{minimal} (also called \textbf{reduced}) if it has
the smallest number of edges in its topological equivalence class.
Two minimal equivalent networks can be obtained from one another using
only Y-$\Delta$ moves.

Two networks are \textbf{electrically equivalent\/} if they have the
same response matrix.  In \cite{CGV} it is shown that networks are
electrically equivalent if and only if they can be obtained from one
another by electrical transformations.

\subsection{Medial graph and strand matching}

The \textbf{medial graph} of a network is a degree-$4$ graph with a
vertex for every edge of $\G$, and an edge connecting two vertices if
the corresponding edges of $\G$ are consecutive around a face of $\G$
(see Figure~\ref{strands-network}).  For each node of $\G$ it is
customary to break the edge of the medial graph separating it from
$\infty$ into two half-edges, called \textbf{stubs}. In this way the
medial graph consists of $n$ \textbf{strands}, which are paths in the
medial graph which go straight (neither turning left nor right)
through each vertex; strands begin and end at stubs.

A circular planar network is minimal if there are no closed strands,
strands do not cross themselves, and two strands cross at most once
\cite{CGV}.

To each node $i$ of a minimal circular planar network there are two
stubs of the medial graph, one just to the left of $i$ (in the
circular order) and one just to the right of $i$.  We label these two
stubs $2i-1$ and $2i$.  There is a fixed-point free involution of the
stubs defined by following the strands from one end to the other.
This involution is called the \textbf{strand matching} $\pi=\pi(\G)$
of the network.

Note that Y-$\Delta$ moves do not change the strand matching~$\pi$.
Thus $\pi$ is a function only of the topological equivalence class of the
(minimal) network.

\begin{figure}[t]
\begin{center}
\includegraphics[width=3.5in]{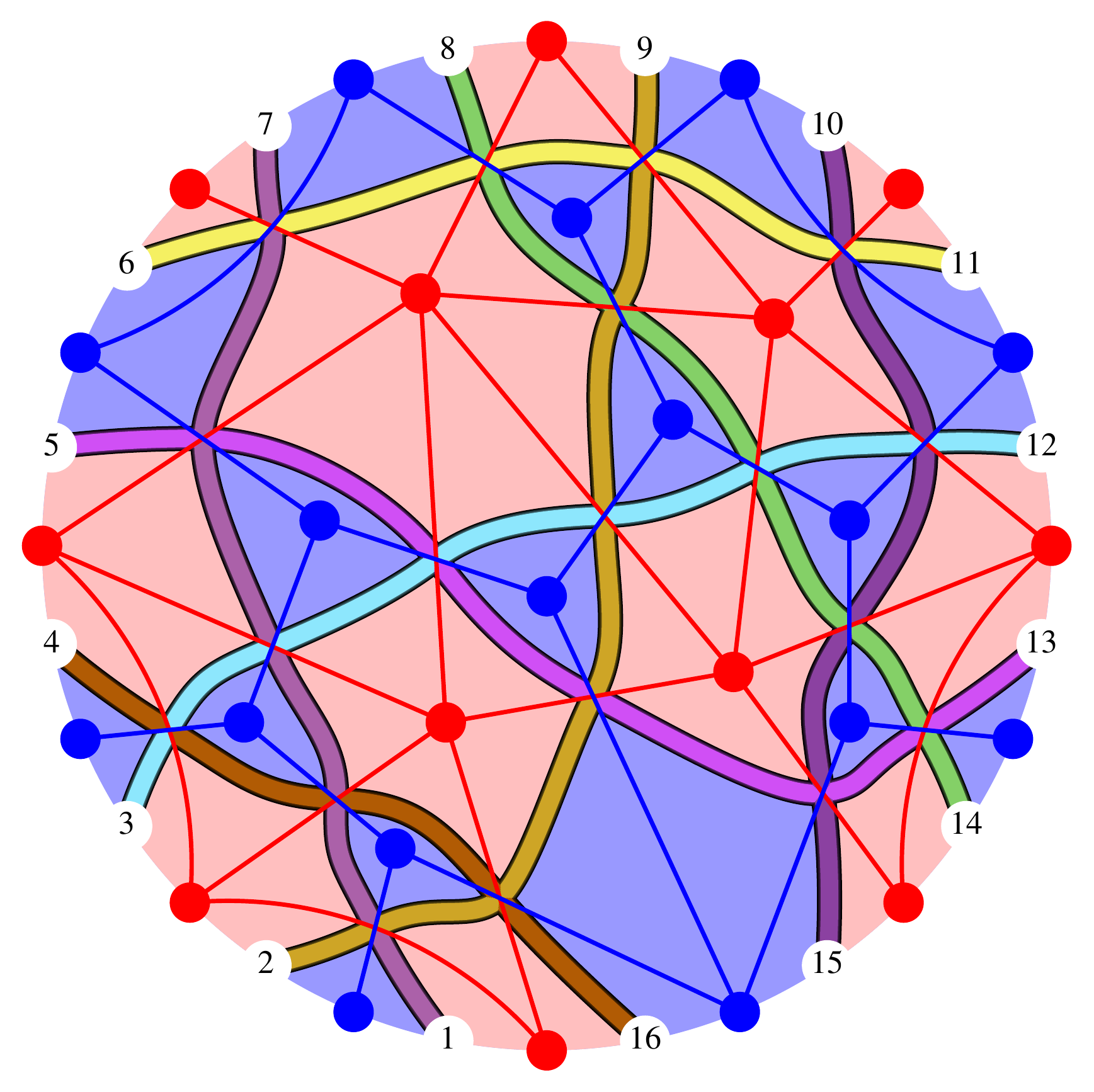}
\end{center}
\caption{ A strand diagram with 16 stubs.  The strands divide the disk
  into cells, which are alternately colored blue or red.  The blue
  cells are the vertices of the blue network, which has 8 nodes, and
  similarly the red cells define the red network, which is dual
  (within the disk) 
  to the blue network.  The vertices of the medial
  graph are where the blue and red network edges intersect, and the
  edges of the medial graph can be drawn where the strands are.
\label{strands-network}}
\end{figure}

Every fixed-point free involution on $\{1,\dots,2n\}$ is the strand matching of some
electrical network with $n$ nodes.  This can be seen by taking $2n$
points in generic position on the circle; join them in pairs using
chords according to the strand matching~$\pi$. The chords form the
medial graph of a network on $n$ nodes with strand matching~$\pi$.
For some strand matchings, some of the boundary nodes of the network
will be glued together or in different components --- the associated networks are called
cactus networks in \cite{lam}.

\subsection{Groves and partitions}
Given a network, a \textbf{grove}
is a set of edges with the property that it contains no cycles and every
component contains at least one node.
A grove is similar in concept to an \textit{essential spanning forest\/} on an infinite graph,
which is a set of edges containing no cycles for which every connected component is infinite,
in the sense that every tree reaches the ``boundary''.

Each grove partitions the nodes according to its connected components.
Associated to a partition $\tau$ of the nodes is its partition sum
\[Z_\tau=\sum_{T\in\tau} wt(T),\] where the sum is over groves having partition $\tau$,
and $wt(T)$ is the product of the conductances of edges in $T$.

In \cite{KW1} we showed how to compute $Z_\tau$ (appropriately
normalized) for any circular planar network, for any partition $\tau$,
as a polynomial of entries in $L$.  In \cite{kw:annular} we showed
that $Z_\tau$ (appropriately normalized) is a linear combination of
minors of $L$.

\subsection{Tripod and dual-tripod partitions}
\label{sec:tripod-pf}

A partition which contains a part of size $3$, with the 
remaining parts having size $2$ and
ordered in parallel in the three regions complementary to the
triple part is called a \textbf{tripod partition}.
We also allow the outermost part in any of the three regions to
be a singleton part rather than a doubleton part.  We also
consider partitions consisting of parallel doubleton parts,
possibly also with outermost singleton parts, to be (degenerate)
tripod partitions.  The bottom row of Figure~\ref{tripods} illustrates
a variety of tripod partitions.  The dual partitions are illustrated
in the top row, and are called \textbf{dual-tripod partitions}.
The boundary of the disk can partitioned into red, green, and blue
segments, such that such the parts of $\tau$ connect nodes of
different color.  Singleton nodes are split between these segments,
and can be regarded as having two colors.

\begin{figure}[t!]
\center{\includegraphics[width=\textwidth]{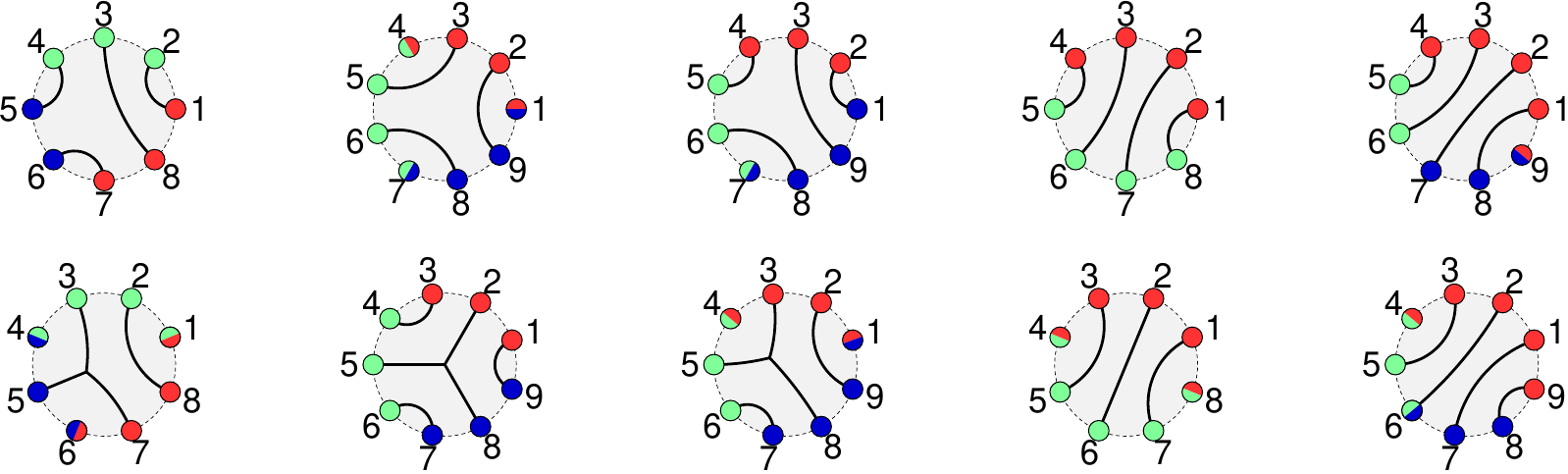}}
\caption{
Tripod partitions (bottom row) and dual-tripod partitions (top row).
There is another characterization of tripod and dual-tripod partitions, where the boundary is divided into three segments which are are colored red, green, and blue, and the parts of the partition connect nodes of different colors.
This figure first appeared in \cite{KW2}.
}
\label{tripods}
\end{figure}

In earlier work we showed how to compute grove partition
functions for dual-tripod and tripod partitions in terms of a Pfaffian of a
matrix constructed from the response matrix \cite{KW2}.
Recall that the Pfaffian (denoted $\Pf$) is defined for skew-symmetric matrices,
and its square is the determinant of the matrix.
We will show in Section~\ref{sec:tripod-variable} how to reconstruct
the edge conductances of a standard minimal network using the grove
partition functions of dual-tripod and tripod partitions.

\newcommand{\fm}{\phantom{-}}
\setcounter{MaxMatrixCols}{20}

The Pfaffian formula for dual-tripod partitions is best explained by an example (which we borrow from \cite{KW2}):
\[
\raisebox{-0.4\height}{$\displaystyle\frac{Z\left(\raisebox{-0.45\height}{\includegraphics[scale=0.85]{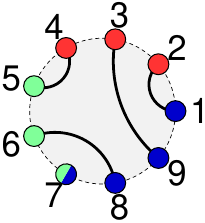}}\right)}{Z(uncrossing)}$} =
\Pf\footnotesize\begin{bmatrix}
 \red{0} & \red{0} & \red{0} &\fm L_{\red{2},\green{5}} &\fm L_{\red{2},\green{6}} &\fm L_{\red{2},\blue{8}} &\fm L_{\red{2},\blue{9}} &\fm L_{\red{2},\blue{1}} \\
 \red{0} & \red{0} & \red{0} &\fm L_{\red{3},\green{5}} &\fm L_{\red{3},\green{6}} &\fm L_{\red{3},\blue{8}} &\fm L_{\red{3},\blue{9}} &\fm L_{\red{3},\blue{1}} \\
 \red{0} & \red{0} & \red{0} &\fm L_{\red{4},\green{5}} &\fm L_{\red{4},\green{6}} &\fm L_{\red{4},\blue{8}} &\fm L_{\red{4},\blue{9}} &\fm L_{\red{4},\blue{1}} \\
 -L_{\green{5},\red{2}} & -L_{\green{5},\red{3}} & -L_{\green{5},\red{4}} & \green{0} & \green{0} &\fm L_{\green{5},\blue{8}} &\fm L_{\green{5},\blue{9}} &\fm L_{\green{5},\blue{1}} \\
 -L_{\green{6},\red{2}} & -L_{\green{6},\red{3}} & -L_{\green{6},\red{4}} & \green{0} & \green{0} &\fm L_{\green{6},\blue{8}} &\fm L_{\green{6},\blue{9}} &\fm L_{\green{6},\blue{1}} \\
 -L_{\blue{8},\red{2}} & -L_{\blue{8},\red{3}} & -L_{\blue{8},\red{4}} & -L_{\blue{8},\green{5}} & -L_{\blue{8},\green{6}} & \blue{0} & \blue{0} & \blue{0} \\
 -L_{\blue{9},\red{2}} & -L_{\blue{9},\red{3}} & -L_{\blue{9},\red{4}} & -L_{\blue{9},\green{5}} & -L_{\blue{9},\green{6}} & \blue{0} & \blue{0} & \blue{0} \\
 -L_{\blue{1},\red{2}} & -L_{\blue{1},\red{3}} & -L_{\blue{1},\red{4}} & -L_{\blue{1},\green{5}} & -L_{\blue{1},\green{6}} & \blue{0} & \blue{0} & \blue{0} \\
\end{bmatrix}
\]
Here $Z(uncrossing)$ denotes the partition function for groves in
which every node is in a separate tree.  The rows and columns are
indexed by the nodes (in cyclic order) which are red, green, and blue.
Singleton (bichromatic) nodes are omitted.  Above the diagonal, each
matrix entry is the response matrix, unless the row and column are of
the same color, in which case the matrix entry is $0$.

Since the Pfaffian is only defined for skew symmetric matrices, we
sometimes do not write the portion below the diagonal.

Tripod partitions have a similar but slightly different Pfaffian
formula, illustrated below.  In \cite{KW2} we gave a triple-sum of
Pfaffians formula for tripod partitions; this single-Pfaffian
formula for tripods is new, but follows from other formulas in \cite{KW2}.
\[
\raisebox{-0.4\height}{$\displaystyle\frac{Z\!\left(\!\raisebox{-0.45\height}{\includegraphics[scale=0.85]{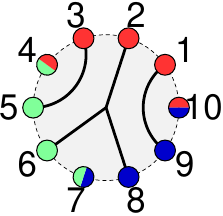}}\!\right)\!}{Z(uncrossing)}$} =
\Pf\footnotesize\begin{bmatrix}
 \red{0} & \red{0} & \red{0} & L_{\red{1},\green{5}} & L_{\red{1},\green{6}} & -L_{\red{1},\blue{8}} & -L_{\red{1},\blue{9}} & L_{\red{1},\green{5}}+L_{\red{1},\green{6}}+L_{\red{1},7} \\
 & \red{0} & \red{0} & L_{\red{2},\green{5}} & L_{\red{2},\green{6}} & -L_{\red{2},\blue{8}} & -L_{\red{2},\blue{9}} & L_{\red{2},\green{5}}+L_{\red{2},\green{6}}+L_{\red{2},7} \\
 & & \red{0} & L_{\red{3},\green{5}} & L_{\red{3},\green{6}} & -L_{\red{3},\blue{8}} & -L_{\red{3},\blue{9}} & L_{\red{3},\green{5}}+L_{\red{3},\green{6}}+L_{\red{3},7} \\
 & & & \green{0} & \green{0} & \fm L_{\green{5},\blue{8}} & \fm L_{\green{5},\blue{9}} & L_{\green{5},\blue{8}}+L_{\green{5},\blue{9}}+L_{\green{5},10} \\
 & & & & \green{0} & \fm L_{\green{6},\blue{8}} & \fm L_{\green{6},\blue{9}} & L_{\green{6},\blue{8}}+L_{\green{6},\blue{9}}+L_{\green{6},10} \\
 & & & & & \blue{0} & \blue{0} & L_{\blue{8},\red{1}}+L_{\blue{8},\red{2}}+L_{\blue{8},\red{3}}+L_{\blue{8},4} \\
 & & & & & & \blue{0} & L_{\blue{9},\red{1}}+L_{\blue{9},\red{2}}+L_{\blue{9},\red{3}}+L_{\blue{9},4} \\
 & & & & & & & 0\\
\end{bmatrix}
\]
Except for the last row and column, the matrix is constructed in the
same manner as for the dual-tripod matrix, except that entries whose
row is red and column is blue have a minus sign.  The last column is
obtained by summing the positive entries in the row, and adding one
more term corresponding to the singleton node (if any) which has no colors in
common with the row node.

\begin{proposition}
  For a tripod partition $\tau$, $Z_\tau/Z_{\text{uncrossing}}$ is given by the Pfaffian of the above matrix.
\end{proposition}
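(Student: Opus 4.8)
The plan is to derive the single-Pfaffian formula from the triple-sum-of-Pfaffians formula for tripod partitions established in \cite{KW2}, by recognizing the three Pfaffians there as the terms produced by the multilinear expansion, along the last row and column, of the single Pfaffian displayed in the statement.

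First I would fix notation to match \cite{KW2}: the triple part $\{a,b,c\}$, the induced red/green/blue segmentation of the boundary, the colored nodes, and the three region-singletons (if present), laid out as in the displayed matrix. I would then recall the precise form of the triple-sum formula of \cite{KW2}: $Z_\tau/Z_{\text{uncrossing}}$ is a signed sum of three Pfaffians, each closely related to the common ``core'' skew-symmetric matrix whose $(i,j)$ entry is $0$ when $i,j$ have the same color, equals $L_{i,j}$ when $\{i,j\}$ is red--green or green--blue, and equals $-L_{i,j}$ when $\{i,j\}$ is red--blue, together with one extra row and column recording the contribution of one boundary region. What must then be proved is purely algebraic: that these three matrices agree off their last row and column, and that their three last columns sum, entry by entry, to the last column in the statement.

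Next I would carry out that verification using multilinearity of the Pfaffian in the pair consisting of its last column and its skew-partner last row, the Pfaffian cofactor (Laplace) expansion, and the relation $L\mathbf{1}=0$ (the response matrix has vanishing row sums). The row-sum relation is the tool that lets one trade a sum of $L$-entries over one boundary region for the complementary sum, and so reconcile whatever region-columns appear in the \cite{KW2} matrices with the ``sum of the positive entries in the row, plus the opposite-colored singleton'' prescription in the statement. Once the three last columns are shown to add up correctly and the cores are seen to coincide, multilinearity gives $\Pf(M)=\Pf(N_R)+\Pf(N_G)+\Pf(N_B)=Z_\tau/Z_{\text{uncrossing}}$ at once.

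The step I expect to be the main obstacle is the bookkeeping of signs. The $-1$ on the red--blue entries is genuine: the core is not a signature conjugate of the all-positive matrix used for dual-tripod partitions, because the product of the three pairwise sign conventions around the color cycle is $-1$, so the signs must be tracked honestly through the cofactor expansions rather than absorbed by a change of basis, and the triple-sum formula must be invoked with exactly the signs under which its three cofactor contributions cohere. One must also verify that each region-singleton is attached to the rows of the ``opposite'' color with the correct sign, which is where the cyclic orientation red $\to$ green $\to$ blue enters. As consistency checks I would specialize to the degenerate tripod partitions (parallel doubletons), where the matrix must reduce to the dual-tripod matrix already treated, and verify the $Z_{\text{uncrossing}}$ normalization on a small explicit network. (Alternatively one could prove the formula from scratch by a Lindstr\"om--Gessel--Viennot / Stembridge-type argument, on a planar network in which red--blue strand crossings carry a sign, matching signed families of paths with groves; but collapsing the \cite{KW2} triple sum looks like the shorter route.)
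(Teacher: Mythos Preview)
Your route is plausible but genuinely different from the paper's. The paper does \emph{not} collapse the triple-sum-of-Pfaffians formula for tripods; instead it starts from a different formula in \cite{KW2} (Appendix~B): the \emph{resistance}-based Pfaffian for dual-tripod pairings, $Z^*_{\tau^*}/Z^*_{\text{tree}}$, applied on the dual network via the duality $Z_\tau/Z_{\text{uncrossing}} = Z^*_{\tau^*}/Z^*_{\text{tree}}$. It then performs consecutive-row differencing (a Pfaffian-preserving row/column operation) that converts the dual resistances into primal response entries using $R^*_{i,j}=\sum_{i'\le i<j'\le j} L_{i',j'}$, extracts the linear-in-$t$ term as a sum of \emph{two} Pfaffians (not three), and merges those two by shuffling one row/column to the end and summing the last columns, followed by further sign-fixing row/column operations. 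Missing singletons are handled by adjoining disconnected nodes.

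Your approach stays entirely on the response-matrix side and avoids the detour through duality and pairwise resistances, which is cleaner if it goes through. Its success, however, hinges on the triple-sum Pfaffians from \cite{KW2} really sharing an identical core off the last row/column with exactly the sign pattern you describe; that structural fact is the load-bearing step you must verify against \cite{KW2}, not derive here. If it holds, multilinearity of the Pfaffian in the last row/column and $L\mathbf{1}=0$ finish the job as you outline. The paper's route trades that structural verification for a longer but more mechanical computation whose starting point is already explicit in \cite{KW2}.
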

\begin{proof}
In \cite[Appendix~B]{KW2} we gave a formula for dual tripod partition functions (without singleton parts),
normalized by the weighted sum of trees rather than $Z(\text{uncrossing})$,
using the Pfaffian of a matrix constructed
from the pairwise resistances.  This formula takes the form
\newcommand{\plt}{\phantom{+}t}
\[
\raisebox{-0.4\height}{$\displaystyle\frac{Z\!\left(\!\raisebox{-0.45\height}{\includegraphics[scale=0.85]{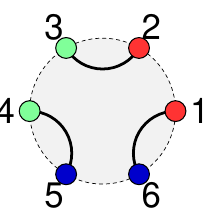}}\!\right)\!}{Z(tree)}$} =
[t]\,\Pf\footnotesize\begin{bmatrix}
\red{0}         &  \red{0}         &\plt-\frac12 R_{\red{1},\green{3}} &\plt-\frac12 R_{\red{1},\green{4}} &\plt-\frac12 R_{\red{1},\blue{5}} &\plt-\frac12 R_{\red{1},\blue{6}} \\[2pt]
&  \red{0}         &\plt-\frac12 R_{\red{2},\green{3}} &\plt-\frac12 R_{\red{2},\green{4}} &\plt-\frac12 R_{\red{2},\blue{5}} &\plt-\frac12 R_{\red{2},\blue{6}} \\[2pt]
&&  \green{0}         &  \green{0}         &\plt-\frac12 R_{\green{3},\blue{5}} &\plt-\frac12 R_{\green{3},\blue{6}} \\[2pt]
&&&  \green{0}         &\plt-\frac12 R_{\green{4},\blue{5}} &\plt-\frac12 R_{\green{4},\blue{6}} \\[2pt]
&&&&  \blue{0}         &  \blue{0}         \\
&&&&&  \blue{0}
\end{bmatrix}
\]
Here the Pfaffian is a polynomial in $t$, and we take the linear term.

Suppose that $\tau$ is a tripod partition with three singleton (bichromatic) nodes, so that the dual partition $\tau^*$ is a pairing of the dual nodes.
Since $Z_\tau/Z_{\text{uncrossing}} = Z^*_{\tau^*}/Z^*_{\text{tree}}$, where $Z^*_{\text{tree}}$ is the
weighted sum of trees in the dual network, we apply the above formula on the dual network.

Suppose that in the dual there are $r$ red nodes, $g$ green nodes, and $b$ blue nodes, so that in the primal there are $r-1$ fully red nodes, $g-1$ fully red nodes, and $b-1$ fully blue nodes, with the red/green node having index $r$, the green/blue node having index $r+g$, and the blue/red node having index $r+g+b$.  Let $n=r+g+b$.  We index the primal and dual nodes so that dual node $i$ is between primal nodes $i$ and $i+1\bmod n$.

We do a sequence of row and column operations on the above matrix which do not affect the Pfaffian.  We subtract each row from the previous row (and each column from the previous column).
Except along the borders of the blocks, and except for the last row and column, the new matrix entry at $i,j$ is either $0$ (if $i$ and $j$ have the same color), or else
$-\frac{1}{2}R^*_{i,j}+\frac{1}{2}R^*_{i+1,j}+\frac{1}{2}R^*_{i,j+1}-\frac{1}{2}R^*_{i+1,j+1}$.  But the dual resistance $R^*_{i,j}$ can be expressed in terms of the primal response matrix:
\[R^*_{i,j} = \sum_{\substack{i',j'\text{ such that}\\i'\leq i<j'\leq j\text{ cyclically}}} L_{i',j'},\]
so these non-border matrix entries simplify to $-L_{i,j}$.
After doing these row/column operations, the matrix becomes
\[\footnotesize
    \!\!\!\begin{blockarray}{ccccccc}
        & j<r\! & j=r& r<j<r{+}g & \!j=r{+}g\! & \cdots 
 & j=r{+}g{+}b \\
      \begin{block}{r[cccccc]}\\[-4pt]
        i<r\!\!\!\!\!\! & 0 & \frac{1}{2}R^*_{i,r{+}1}{-}\frac{1}{2}R^*_{i{+}1,r{+}1} & -L_{i,j} & * & -L_{i,j} & \frac{1}{2}R^*_{i{+}1,n}{-}\frac{1}{2}R^*_{i,n} \\
        i=r\!\!\!\!\!\! &  & 0 & \frac{1}{2}R^*_{r,j{+}1}{-}\frac{1}{2}R^*_{r,j} & t-* & -L_{i,j} & \frac{1}{2}R^*_{i{+}1,n}{-}\frac{1}{2}R^*_{i,n} \\[-3pt]
     \vdots\quad 
 &  & & 0 & * & -L_{i,j} & \frac{1}{2}R^*_{i{+}1,n}{-}\frac{1}{2}R^*_{i,n} \\
r{+}g\!\!\!\!\!\! &  &  & & 0 & * & t-*\\[-3pt]
        \vdots\quad 
&  & & & & 0 & 0 \\
r{+}g{+}b\!\!\!\!\!\! &  & & & & & 0 \\[4pt]
      \end{block}
    \end{blockarray}\!
\]
Notice that the only matrix entries containing $t$ have row/column $r+g$ and either row/column $r$ or row/column $r+g+b$.  Since we are extracting the linear
term, we can express this as a sum of two Pfaffians, the first with row/columns $r+g$ and $r$ removed (and sign $(-1)^{g-1}$), and the other with row/columns $r+g$ and $r+g+b$ removed (and sign $(-1)^{b-1}$).
For the second Pfaffian, we move row/column $r$ to the last position (which introduces a sign of $(-1)^{g+b}$), and combine it with the first Pfaffian by simply adding the last row/columns of both matrices.  
The last column becomes
\[
\footnotesize
    \begin{blockarray}{ccccccc} \\
      \begin{block}{r[cccccc]}\\[-4pt]
        i<r &  L_{i,r{+}1}+\cdots+L_{i,n{-1}}\\
     r<i<r+g &  -L_{i,1}-\cdots-L_{i,r-1}-L_{i,n}\\
r+g<i<r+g+b & L_{i,r} \\
\text{last row} & 0 \\[4pt]
      \end{block}
    \end{blockarray}
\times (-1)^{g-1}
\]
Then we do additional row/column operations to absorb the factor of $(-1)^{g-1}$ and make the matrix take a more symmetric form with respect to the three color classes.

If the tripod partition $\tau$ does not contain three singleton nodes,
we may reduce this to the previous case by simply adjoining such nodes
to the network with no edges connecting them to other vertices.  Both
$Z_\tau$ and $Z_{\text{uncrossing}}$ remain unchanged, and the
response matrix entries for the new nodes are zero, so the same
formula holds.
\end{proof}

\section{Standard networks} \label{sec:standard}
The \textbf{standard well-connected network} on $n$ nodes (defined in \cite{CGV}) is the
network illustrated in Figure~\ref{stdnet}. We construct a network of
similar form for each strand matching~$\pi$; we call these
\textbf{standard networks}.

\begin{figure}[htbp]
\center{\includegraphics[height=1.1in]{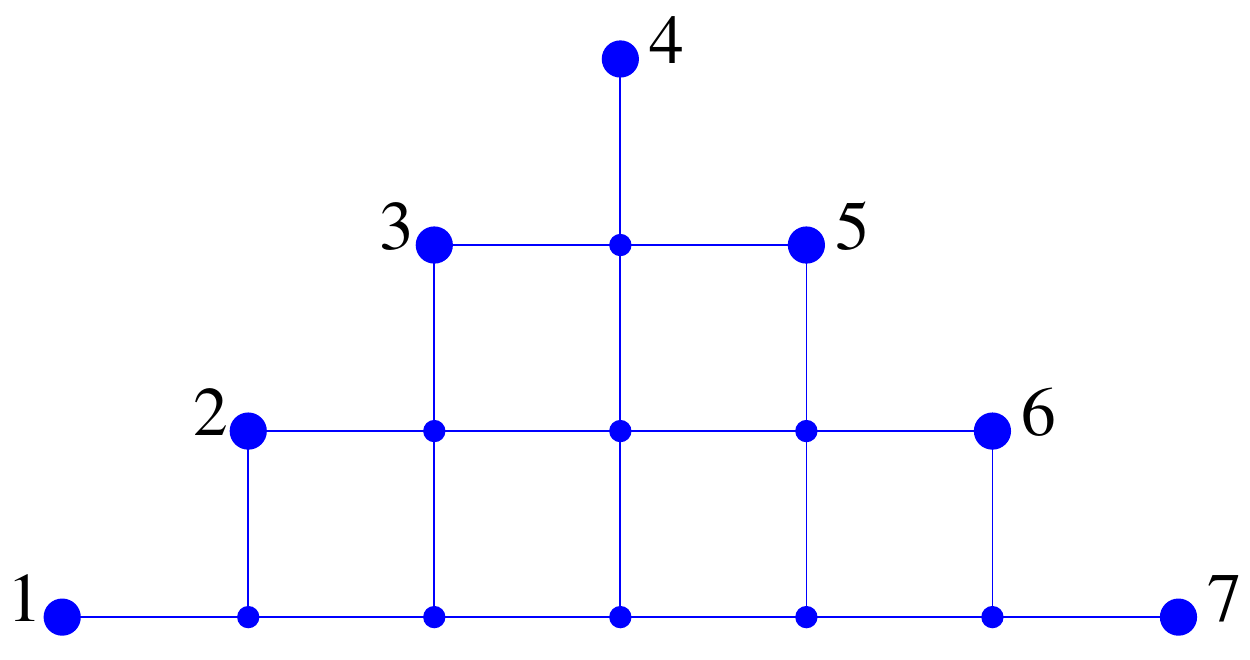}\hfill\includegraphics[height=1.1in]{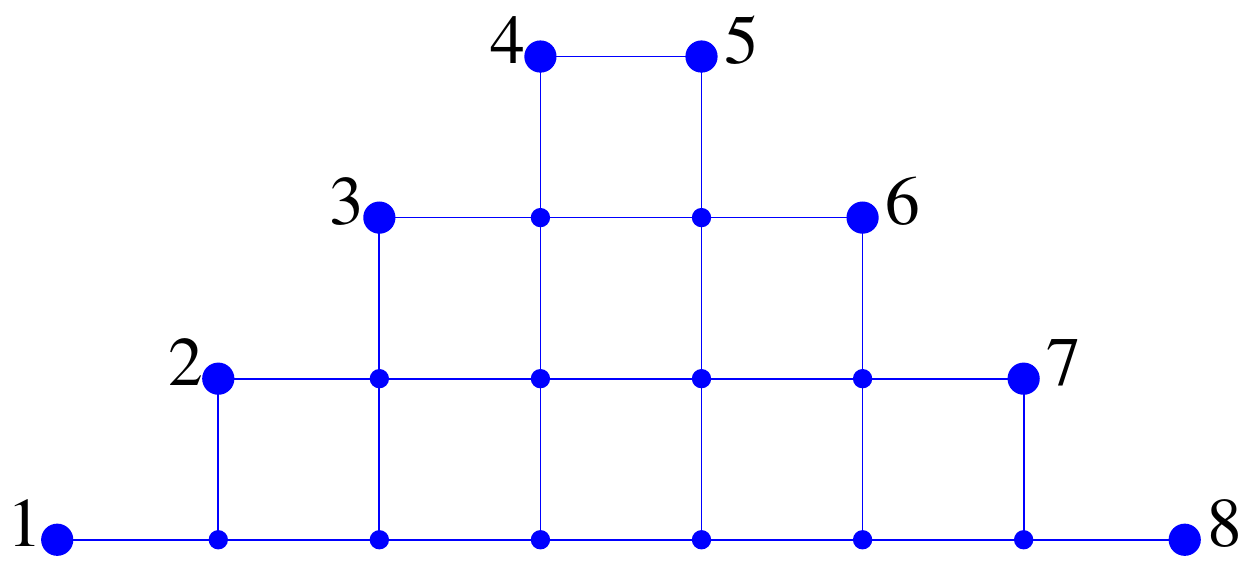}}
\caption{\label{stdnet}The standard well-connected networks on $7$ and $8$ nodes.}
\end{figure}

A \textbf{Dyck path} of order $n$
is the graph of a simple random walk on $\Z$ which starts at the origin, remains nonnegative,
and returns to the origin after $2n$ steps.
Given two Dyck paths $\lambda$ and $\mu$ of order $n$,
the domination partial order is defined by $\lambda\preceq\mu$ if at each horizontal position,
$\mu$ is at least as high as $\lambda$.  If $\lambda\preceq\mu$, then the region between them
is denoted $\lambda/\mu$ and is called a skew Young diagram, or simply a skew shape.
A \textbf{Dyck tiling} (see \cite{KW3,MR2927185}, where these were
called \textbf{cover-inclusive} Dyck tilings) is a tiling
of $\lambda/\mu$ with \textit{Dyck tiles\/} which are fattened Dyck paths, satisfying the following
``cover-inclusive'' constraint:
if two tiles are vertically adjacent (in the sense that some square of one lies in the same column
as a
square of the other, and one position above it), then the horizontal extent of the upper tile
is a subset of the horizontal extent of the lower tile.  See Figure~\ref{DyckTiling} for an
example of a Dyck tiling.
Dyck tilings have also been studied in \cite{kim,KMPW,kw:annular,fayers,fisher-nadeau}.
\begin{figure}[htbp]
\center{\includegraphics[width=5in]{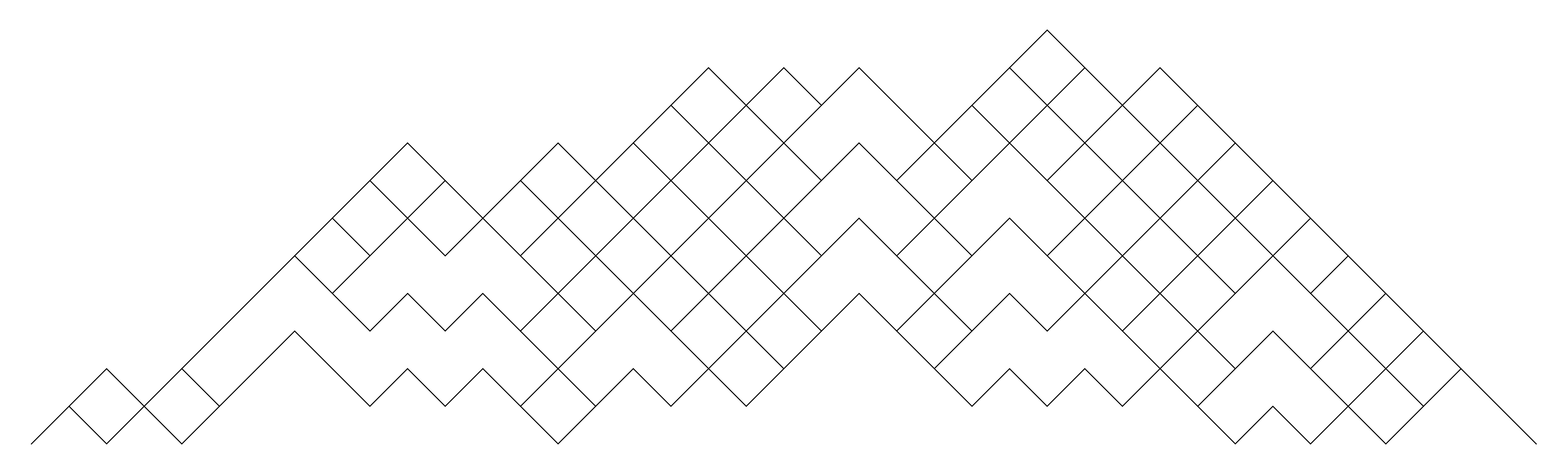}}
\caption{\label{DyckTiling}A Dyck tiling.}
\end{figure}

There is a bijection between Dyck tilings and perfect matchings of $\{1,\dots,2n\}$
\cite[Fig.~18]{kim} and \cite[Fig.~7]{KMPW}.  Here we need a slightly different bijection, which we now describe.
To the Dyck tiling we associate a strand diagram as follows (see Figure~\ref{strands}).
Along the upper Dyck path~$\mu$ of the tiling,
there is a strand starting or ending in the center of every edge.
Each Dyck tile contains two medial strands that cross once: the strand entering
at the lower edge adjacent to the left-most point of the tile and exiting at the upper edge adjacent to
the right-most point of the tile,
and the strand entering
at the upper edge adjacent to the left-most point of the tile and exiting at the lower edge adjacent to
the right-most point of the tile.
A tile may contain additional strands that pass through it horizontally, without crossing any
other strands within the tile.
From this strand diagram we can build the electrical network~$\G$ and its dual network~$\G^*$,
as illustrated in Figure~\ref{strands}.

In the special case where the lower path $\lambda$ is minimal (the zigzag path) and
the upper path $\mu$ is maximal, and every Dyck tile is a single box, then every
pair of strands cross, and the resulting network is the standard well-connected network.
The networks we obtain from other Dyck tilings are analogous to the standard networks
in the well-connected case, so we call them standard networks.

\begin{figure}[htbp]
\begin{center}
\raisebox{10pt}{(a)}\includegraphics[width=.71\textwidth]{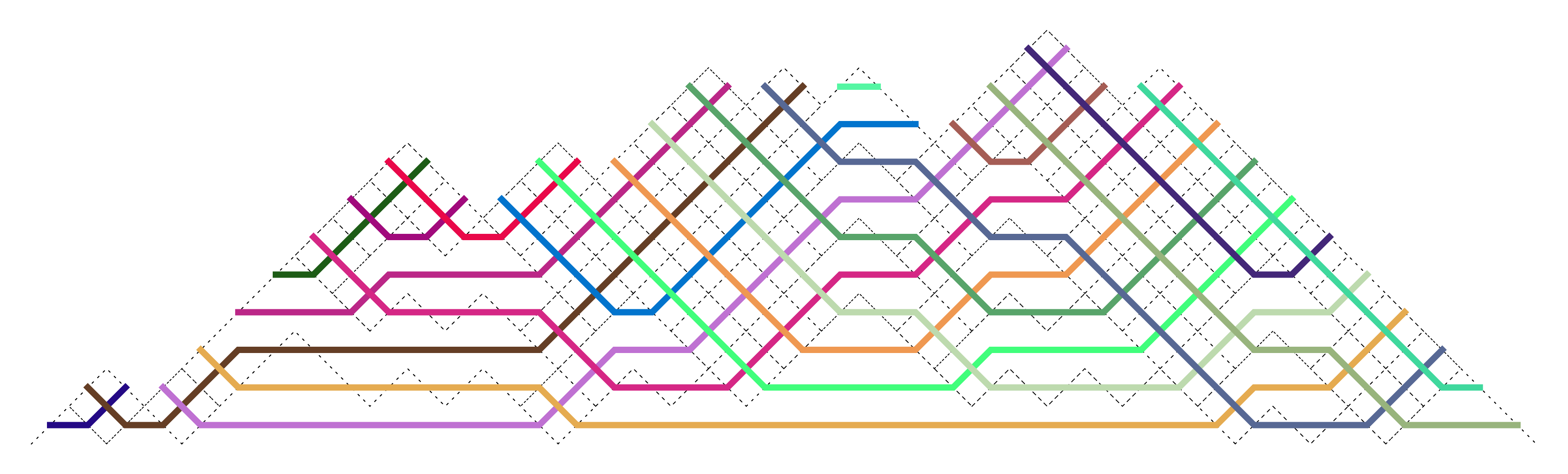}\\[-5pt]
\raisebox{10pt}{(b)}\includegraphics[width=.71\textwidth]{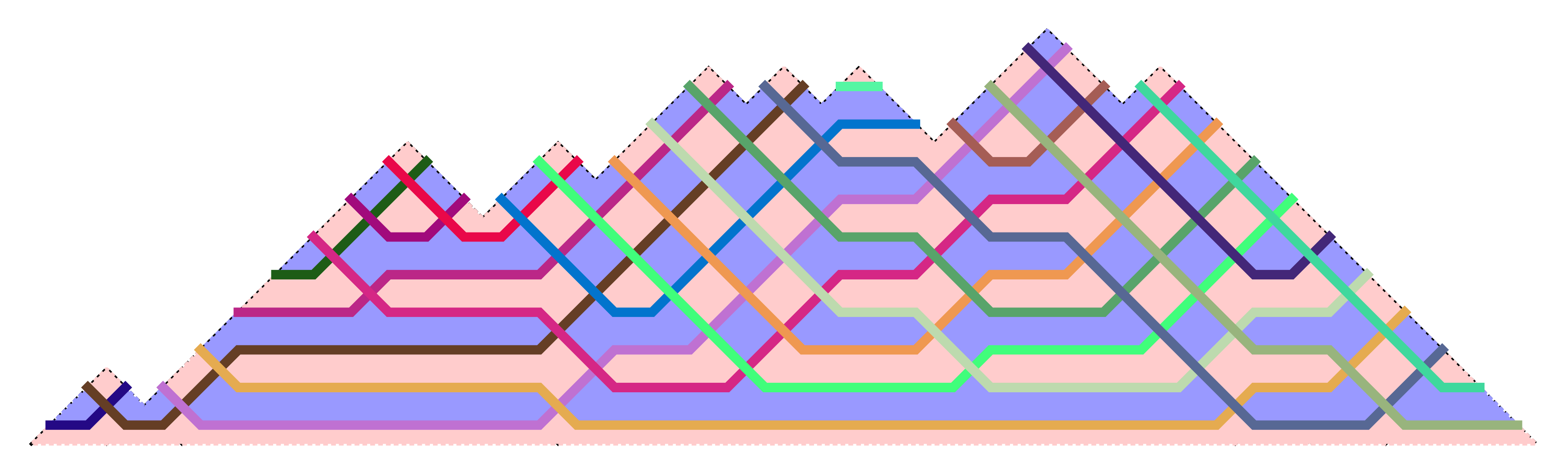}\\[5pt]
\raisebox{10pt}{(c)}\includegraphics[width=.71\textwidth]{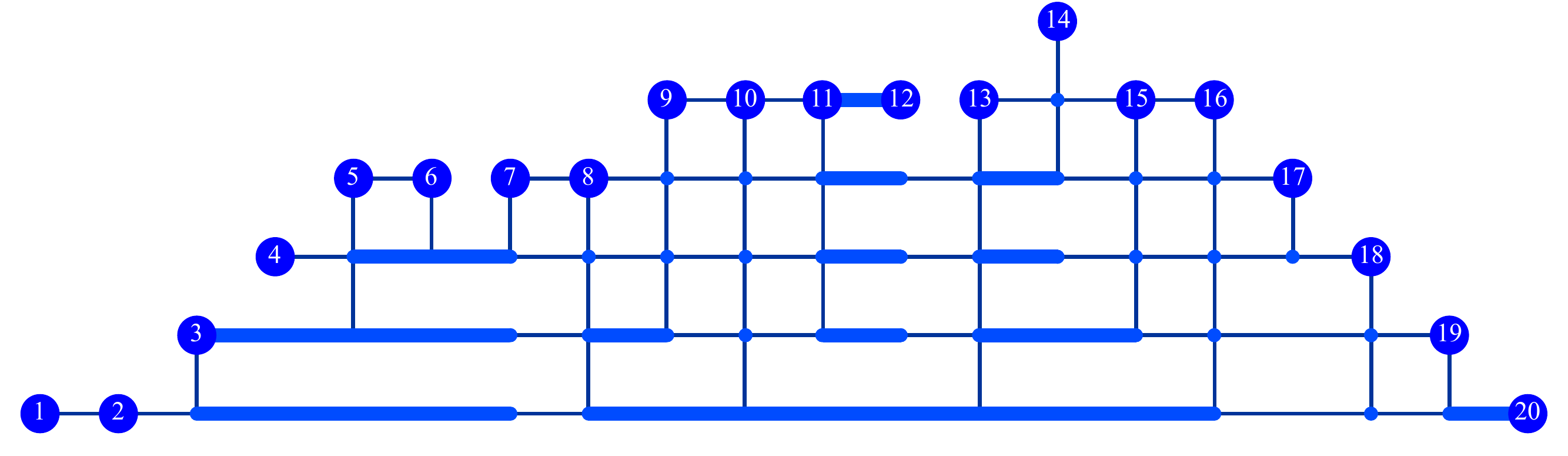}\\[5pt]
\raisebox{10pt}{(d)}\includegraphics[width=.71\textwidth]{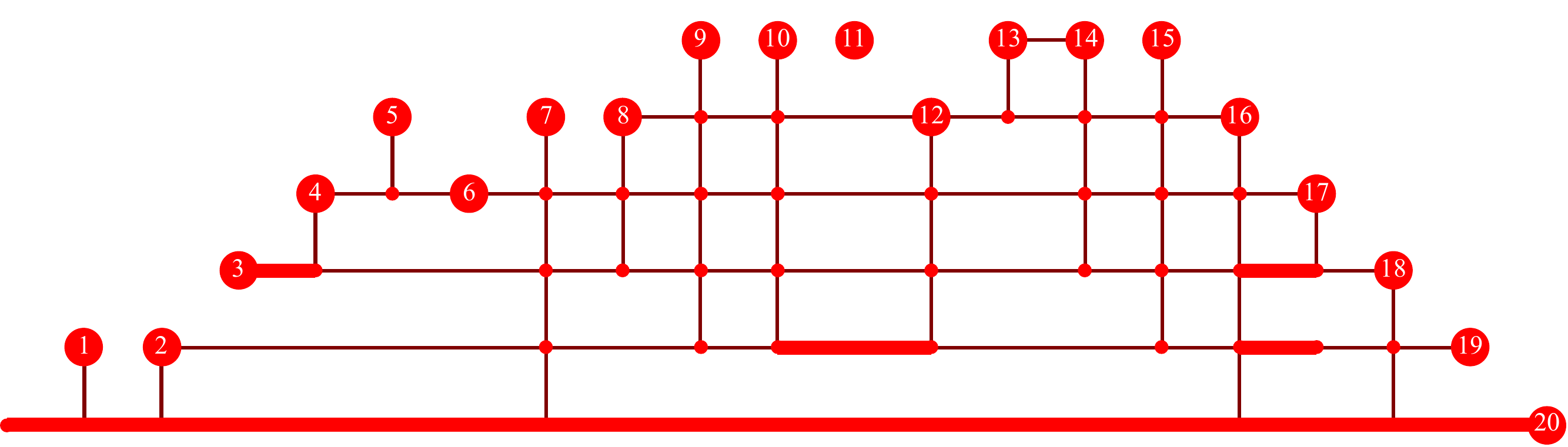}
\end{center}
\caption{(a) The strand diagram associated to a Dyck tiling of Figure \protect{\ref{DyckTiling}},
 (b) the strands in the region below the upper Dyck path (with tiles erased) together with the cells they bound colored red and blue, (c) the blue network, formed from the blue cells, (d) the red network formed from the red cells.  Some of the vertices in the networks are drawn in an extended fashion.  The blue and red networks are dual to one another.  In this example, red node 11 is disconnected, and blue nodes 11 and 12 are glued together.
\label{strands}}
\end{figure}

For every crossing in the strand diagram, there is either a horizontal
edge of $\G$ and a vertical dual edge of~$\G^*$, or else a vertical edge
of $\G$ and a horizontal dual edge of~$\G^*$.  By the \textit{horizontal
  conductance\/} of an edge (or crossing), we mean the conductance of
either the edge or its dual, whichever one is horizontal, and
similarly the \textit{vertical conductance\/} is the conductance of
whichever one is vertical (and is the reciprocal of the horizontal
conductance).

We now describe how to construct a Dyck tiling from a strand matching,
using an inductive procedure, which is
illustrated in Figure~\ref{strandtotiling}.  The base case is $n=0$,
in which the trivial strand matching, containing no strands or stubs,
corresponds to the trivial Dyck tiling, which has no tiles, and whose
upper and lower Dyck paths have length $2n=0$.  We can build up any
strand matching starting from the trivial matching by a sequence of
two types of elementary moves, and while doing this, we build up the
Dyck tiling and its associated rectilinear strand diagram starting
from the trivial tiling.
\begin{figure}[htbp]
\begin{center}
\includegraphics[width=3in]{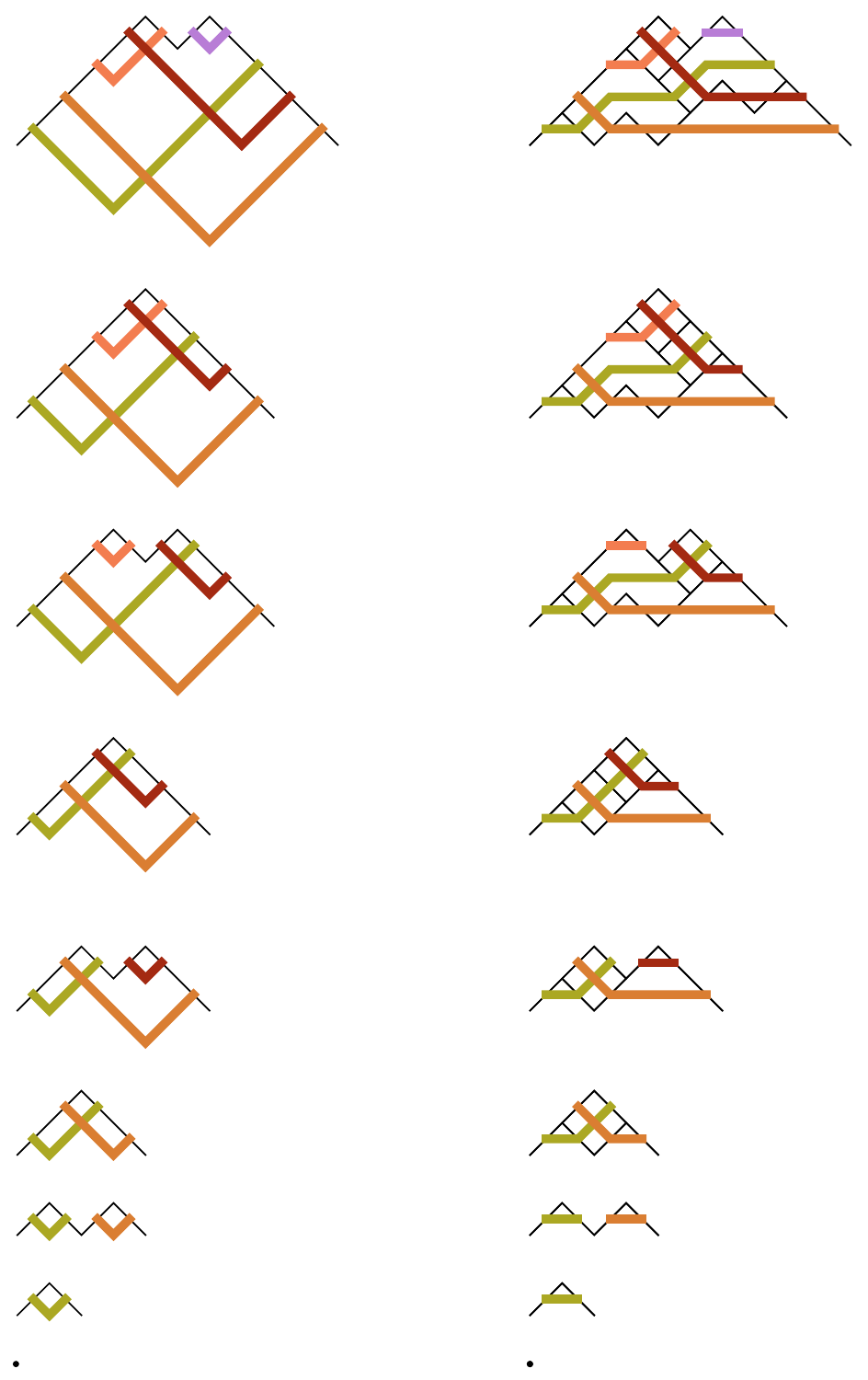}
\end{center}
\caption{
Going from a strand matching to a Dyck tiling with associated rectilinear strand diagram.
This construction is a modification of the recursive procedure illustrated in \cite[Fig.~7]{KMPW} (see also \cite[Fig.~18]{kim}).  We start in the upper left with the matching represented as a Dyck path and V-shaped strands (these are up to reflection and rotation the ``folded tableaux'' in \cite[Fig.~12]{huang-wen}).  This matching is deconstructed going down on the left by removing peaks in the Dyck path; each such peak corresponds to either matched neighbors $(i,i+1)$, or crossing neighbors $(a,i+1)$, $(i,b)$ where $a<i<i+1<b$.  For each deconstruction move going down on the left there is a corresponding construction move going up on the right: going up, if two neighbors are made to cross, then a box is added, and if a matched pair of neighbors is added, then the tiles overlapping that column are enlarged.
\label{strandtotiling}}
\end{figure}
The two moves to build up a strand matching are (1) introducing into the matching a new strand with adjacent stubs,
either at the beginning or end or between existing stubs,
and (2) crossing a pair of adjacent uncrossed strands, i.e., replacing $(a,i)$ and $(i+1,b)$ (where $a<i<i+1<b$)
with $(a,i+1)$ and $(i,b)$.
The operation of crossing a pair of adjacent endpoints corresponds to adding a square Dyck tile.
The operation of inserting a pair of paired endpoints corresponds to splitting a Dyck tiling
along a given column, in which case any tile crossing that column gets split into two halves which
are joined in a unique way (inserting a ``tent'') to make a larger tile.

\subsection{Tripod variables}
\label{sec:tripod-variable}

Given a standard minimal network, we define a collection of
tripod variables as illustrated in
Figure~\ref{dyck-tiling-tripod}.  Consider the strand diagram coming
from the Dyck tiling associated with the standard network.  For any
crossing $\chi$ in this strand diagram, the upward-left strand from
$\chi$ together with all the upward right strands from this strand
form a comb shape (Fig.~\ref{comb}).  If we resolve
each crossing in the comb vertically, and each crossing not in the
comb horizontally, as in Fig.~\ref{comb-tripod}, this
defines a partition $\tau_\chi$ of the nodes of the network (and a
dual partition $\tau^*_\chi$ of the dual nodes of the dual network).
We consider one further pair of partitions, the \textit{exterior
  partition\/}~$\tau_-$ and its dual $\tau^*_-$, which consist in
taking all the horizontal edges.

Let $\{Z_\tau\}$ be the collection of partition sums $Z_\tau$ as
$\tau$ ranges over the exterior partition~$\tau_-$ together with all
the partitions $\tau_\chi$ associated with crossings in the strand
diagram.
We refer to $\{Z_\tau\}$ as the \textbf{tripod variables} of the
network.  We similarly let $\{Z^*_{\tau^*}\}$ be the corresponding
tripod variables in the dual network.

\enlargethispage{12pt}
The partitions $\tau_\chi$ and $\tau_-$ are not themselves tripod or
dual-tripod partitions in general, but they are are closely related to
such partitions in an enlarged electrical network, justifying the name ``tripod variable''.
The enlarged electrical network
$\tilde\G_\chi$, shown in Fig.~\ref{comb-extended}, is formed by
extending the strands in the strand diagram, with a ``notch'' where
the uppermost tooth of the comb meets the spine of the comb.
(We do not define an enlarged network $\tilde\G_-$.)
In the enlarged network~$\tilde\G_\chi$, the comb at $\chi$ defines a
partition $\tilde\tau_\chi$ which \textit{is\/} a tripod or
dual-tripod partition (Fig.~\ref{comb-tripod-extended}).  The
all-horizontal partition $\tilde\tau_-$ in $\tilde\G_\chi$ is also a
tripod or dual-tripod partition.

\begin{lemma}\label{tau-unique}
  Let $\G$ be a standard minimal network, let $\chi$ be a crossing of
  $\G$'s strand diagram, and let $\tilde\G_\chi$ be the extended
  network of $\G$.  Within $\tilde\G_\chi$, there is a unique grove
  whose partition is $\tilde\tau_\chi$, and a unique grove whose
  partition is $\tilde\tau_-$.  Within $\G$, there is a unique grove
  whose partition is $\tau_\chi$, and a unique grove whose partition
  is $\tau_-$, which uniquely extend to the groves of type
  $\tilde\tau_\chi$ and $\tilde\tau_-$ within $\tilde\G_\chi$.
\end{lemma}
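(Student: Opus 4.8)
The plan is to prove existence by writing the groves down explicitly, then to prove uniqueness first inside $\G$ and finally to transfer the statement to $\tilde\G_\chi$.

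\textbf{Existence.} I would read the candidate groves directly off the resolved strand diagram. For $\tau_\chi$ (resp.\ $\tau_-$), resolve every crossing of $\G$'s strand diagram in the prescribed way (vertically on the comb at $\chi$ and horizontally elsewhere; resp.\ horizontally everywhere), and let $G_\chi$ (resp.\ $G_-$) be the set of edges of $\G$ that lie in the resolved direction at their own crossing. Since $\G$ is minimal, its strand diagram has no closed strands, no self-crossings, and no double crossings, so a single global resolution of all crossings turns the diagram into a disjoint family of simple arcs, each running between two stubs on the boundary circle. A cycle in $G_\chi$ would produce a closed curve or a self-intersection among these arcs, so $G_\chi$ is a forest; and each connected component of $G_\chi$ is traced out by one of these boundary-to-boundary arcs, so it contains a node. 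The induced node partition is $\tau_\chi$ by the very definition of $\tau_\chi$. The identical argument produces $G_-$, and in $\tilde\G_\chi$ the notch is placed precisely so that the analogous resolved configuration is a genuine tripod or dual-tripod grove; call these $\tilde G_\chi$ and $\tilde G_-$.

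\textbf{Uniqueness in $\G$.} Let $G'$ be any grove of $\G$ with $\mathrm{partition}(G')=\tau_\chi$. First a counting remark: in any grove the connected components are in bijection with the parts of the induced partition (each component contains a node, and distinct components give disjoint nonempty node sets), so $G'$ has exactly $|V(\G)|-|\tau_\chi|$ edges, the same as $G_\chi$; hence it suffices to show $G_\chi\subseteq G'$. I would show that each edge of $G_\chi$ is \emph{forced}: if $e\in G_\chi$ then $G_\chi-e$ induces a partition strictly finer than $\tau_\chi$, so $e$ carries a connection of $\tau_\chi$, whereas if $e\notin G_\chi$ then $G_\chi+e$ contains a cycle or induces a partition strictly coarser than $\tau_\chi$. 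The cleanest way to promote ``forced'' to ``contained in every realizing grove'' is planar duality: the planar complement $\widehat{G'}$ is a grove of $\G^*$ whose induced partition is necessarily the dual non-crossing partition $\tau_\chi^*$, and the pair $(G',\widehat{G'})$ selects exactly one of $\{\text{primal edge},\text{dual edge}\}$ at every crossing, i.e.\ it \emph{is} a resolution of the strand diagram, one whose primal part and dual part are both boundary-connected forests. Thus uniqueness in $\G$ reduces to: the canonical resolution is the only resolution of $\G$'s (minimal) strand diagram inducing $\tau_\chi$ (equivalently $\tau_\chi^*$ on the dual). For $\tau_-$ this is transparent — the resolution is globally ``horizontal'', and flipping any single crossing to ``vertical'' visibly merges two horizontal rows — and for $\tau_\chi$ I would run the same argument along the inductive construction of the standard network (Figure~\ref{strandtotiling}), carrying the invariant ``the groves of type $\tau_\chi$ and of type $\tau_-$ are unique'' through the two elementary moves, crossing two adjacent strands (adding a box) and inserting a paired stub (splitting along a column and enlarging the tiles), checking each time that $\tau_\chi$, $\tau_-$, and the comb evolve so as to force the unique grove to change only by a prescribed local modification.

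\textbf{From $\G$ to $\tilde\G_\chi$.} The network $\tilde\G_\chi$ is obtained from $\G$ by prolonging the strands and inserting the notch; the associated new edges are either bridges for $\tilde\tau_\chi$ (hence lie in every grove of that type) or are edges that would short-circuit the notch (hence lie in no such grove). So restriction to $\G$ is a bijection between groves of $\tilde\G_\chi$ of type $\tilde\tau_\chi$ and groves of $\G$ of type $\tau_\chi$, and likewise for $\tilde\tau_-$ and $\tau_-$. Combined with uniqueness in $\G$, this yields every assertion of the lemma, including that the unique groves in $\G$ extend uniquely to those in $\tilde\G_\chi$. The delicate point I expect to fight with is the uniqueness step in $\G$: reducing it to the rigidity of the canonical resolution is the right move, but verifying that rigidity for $\tau_\chi$ (rather than the easy case $\tau_-$) means carefully tracking the comb, and the bookkeeping under the tent-insertion move — where several tiles grow at once — is where the real work lies.
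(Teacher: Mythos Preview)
Your route and the paper's run in \emph{opposite} directions.  The paper first proves uniqueness in the extended network $\tilde\G_\chi$, exploiting that $\tilde\tau_\chi$ is a genuine tripod (or dual-tripod): in each of the three sectors cut out by the triple part, one peels the parallel pairings off from the outside in and checks that every connection is pushed as far from the triple part as possible, leaving ``no room'' to alter the grove.  Uniqueness in $\G$ is then a one-line corollary: given two groves of type $\tau_\chi$ in $\G$, fix the known grove of type $\tilde\tau_\chi$ in $\tilde\G_\chi$ and rewire its portion inside $\G$, contradicting uniqueness upstairs.

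Your plan instead proves uniqueness in $\G$ first (by induction along the standard-network construction) and then tries to push it up to $\tilde\G_\chi$ by asserting that restriction to $\G$ is a bijection on groves of the relevant type.  That last step is the gap.  The extension $\tilde\G_\chi$ is not just a notch: it extends \emph{all} strands and, as the paper describes later, is built by a sequence of adjoining boundary edges, internalizing nodes, and inserting isolated nodes, so the node set changes and many new crossings appear.  Your blanket claim that every new edge is ``a bridge for $\tilde\tau_\chi$ or would short-circuit the notch'' amounts exactly to saying the extension part of any $\tilde\tau_\chi$-grove is rigid --- but that is precisely the content of the paper's direct tripod argument, and you have not supplied an independent reason for it.  Going from $\tilde\G_\chi$ down to $\G$ is easy (rewiring inside a subnetwork); going from $\G$ up to $\tilde\G_\chi$ requires you to control the extension, which you have not done.

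Two smaller remarks.  Your existence paragraph is essentially right, but the sentence ``a cycle in $G_\chi$ would produce a closed curve or a self-intersection among these arcs'' conflates the network edges with the medial arcs; the clean statement is that a full resolution of the strand diagram determines a complementary pair (primal grove, dual grove), and acyclicity plus boundary-connectedness follow from the resolved arcs being disjoint simple paths between stubs.  And in your uniqueness-in-$\G$ step, the counting/``forced edge'' discussion is superfluous once you invoke the grove/dual-grove correspondence with resolutions; the inductive verification through the two elementary moves is a reasonable alternative to the paper's peeling argument, but it is more bookkeeping for the same payoff, and it does not by itself help you with $\tilde\G_\chi$.
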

\newpage
\begin{proof}[Proof sketch]
  Suppose that $\tilde\tau_\chi$ is a tripod partition, since
  otherwise we may consider the dual network $\tilde\G_\chi^*$ and
  partition $\tilde\tau_\chi^*$.  Consider the grove illustrated in
  Fig.~\ref{comb-tripod-extended}.  In each of the three regions
  separated by the triple part, by considering the connections between
  the boundary nodes starting with the outermost pairings and
  progressing towards to the triple part, one may check that the
  connections are as far away from the triple part as possible.  Since
  this is true in each of the three regions, there is not ``room'' to
  make any changes to the grove while keeping the partition
  $\tilde\tau_\chi$.  The argument for $\tilde\tau_-$ is similar.

  If there were more than one grove of type $\tau_\chi$ or $\tau_-$
  within $\G$, we could take a grove of type $\tilde\tau_\chi$ or
  $\tilde\tau_-$ within $\tilde\G_\chi$ and rewire the portion within
  $\G$ to obtain a new grove of type $\tilde\tau_\chi$ or
  $\tilde\tau_-$, a contradiction.
\end{proof}

\begin{figure}[t]
\centering
\begin{subfigure}[t]{.48\textwidth}\includegraphics[width=\textwidth]{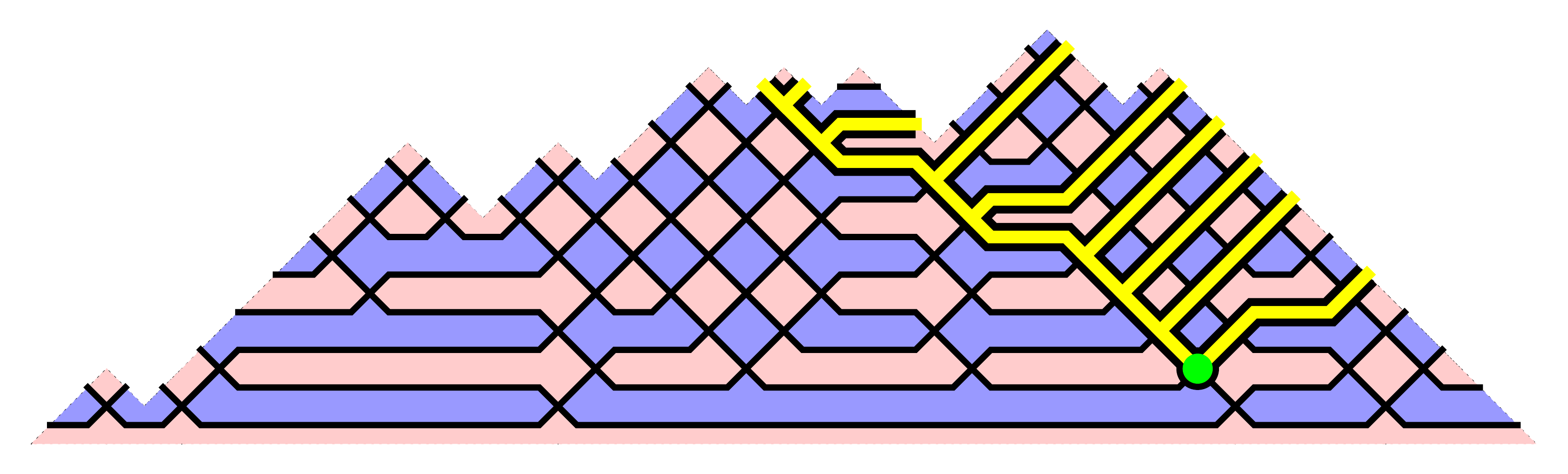}\caption{Strand diagram with ``comb'' (yellow) from one of its crossings $\chi$ (green).}\label{comb}\end{subfigure}\hfill
\begin{subfigure}[t]{.48\textwidth}\includegraphics[width=\textwidth]{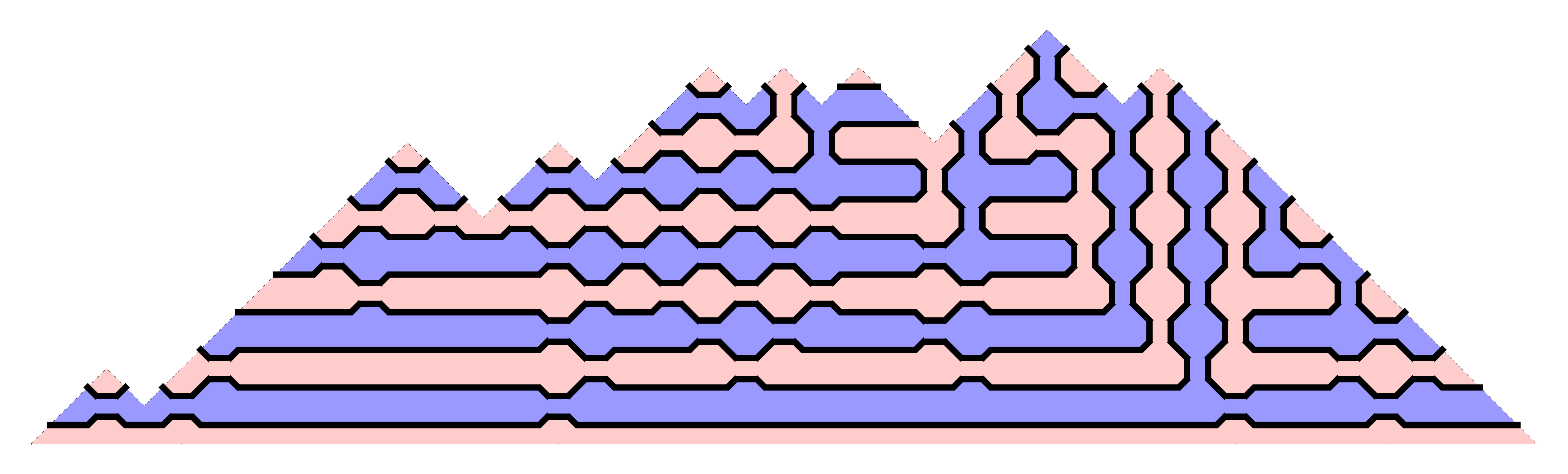}\caption{The unique grove of type $\tau_\chi$.}\label{comb-tripod}\end{subfigure}\hfill
\begin{subfigure}[t]{.48\textwidth}\includegraphics[width=\textwidth]{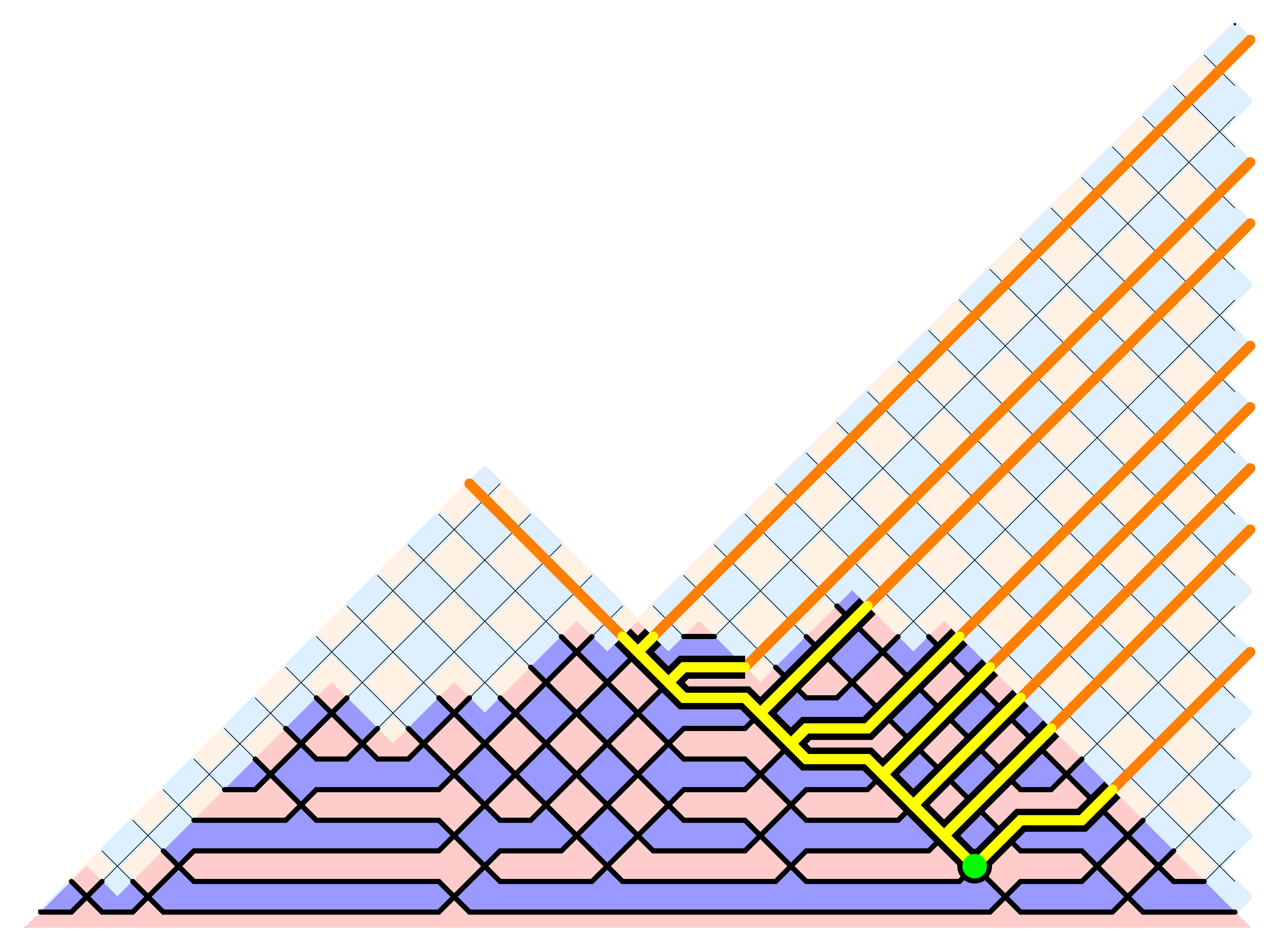}\caption{Strand diagram with comb at $\chi$ in \quad\quad\quad extended graph $\tilde\G_\chi$.}\label{comb-extended}\end{subfigure}
\begin{subfigure}[t]{.48\textwidth}\includegraphics[width=\textwidth]{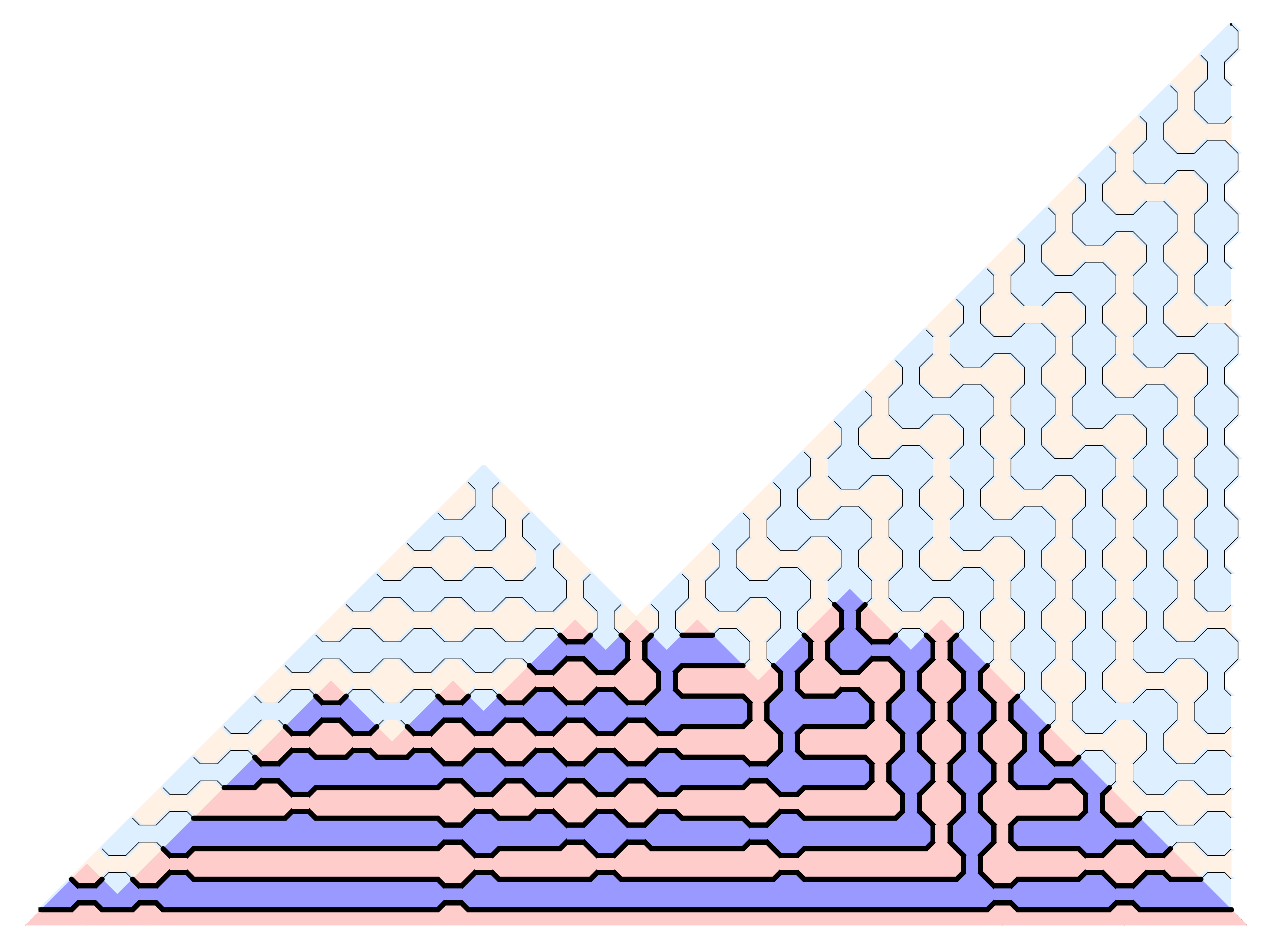}\caption{The unique grove of type $\tilde\tau_\chi$ in extended graph $\tilde\G_\chi$.  $\tau_\chi$ is a tripod or dual-tripod partition.}\label{comb-tripod-extended}\end{subfigure}
\caption{
In panel (a) is the rectilinear strand diagram coming from the Dyck tiling associated with a standard
network.  Also shown in panel (a) is a particular crossing of the strand diagram marked in green, together with the ``comb'' based at that crossing.  The spine of the comb is formed by following the up-left strand segment from the green crossing.  The teeth of the comb are formed by following the up-right strand segments from each crossing in the spine.
  In panel (b) is the associated grove and dual grove.  For each strand crossing in the comb, the vertical edge or dual edge is selected, and otherwise the horizontal edge or dual edge is selected.
Panel (c) shows the strand diagram of the enlarged network, which has a notch above the top tooth of the comb,
and panel (d) shows its associated grove and dual grove.
In this example, the blue grove in panel (d) is a
tripod, and the red grove is a dual tripod.
}
\label{dyck-tiling-tripod}
\end{figure}

\begin{theorem} \label{biratio-tripod}
The number of tripod variables is one more than the number of edges of $\G$.
Each tripod variable is a product of conductances.
The conductances are biratios of tripod variables:
The vertical conductance of a crossing $e$ is
\begin{equation}\label{c=ZZZZ}
c_e = \frac{Z_{\tau_e} Z_{\tau_f}}{Z_{\tau_a} Z_{\tau_b}} = \frac{Z^*_{\tau^*_e} Z^*_{\tau^*_f}}{Z^*_{\tau^*_a} Z^*_{\tau^*_b}}\,,
\end{equation}
where $a$ is the first crossing on the strand going up-and-left from crossing~$e$,
$b$ is the first crossing on the strand going up-and-right from~$e$,
and $f$ is the first crossing on the strand going up-and-left from crossing~$b$.
(If one or more of the crossings $a$, $b$, or $f$ does not exist, then
$\tau_a$, $\tau_b$, or $\tau_f$, respectively, is replaced with the exterior partition~$\tau_-$.)
\end{theorem}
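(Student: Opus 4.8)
The plan is to use Lemma~\ref{tau-unique} to replace every tripod variable by a single monomial in the conductances, and then reduce the biratio identity \eqref{c=ZZZZ} to a short combinatorial identity about how the combs of Figure~\ref{dyck-tiling-tripod} overlap. The first two assertions come essentially for free. The medial graph has one vertex per edge of~$\G$, and in a minimal network every medial vertex is a transverse crossing of two distinct strands, so the crossings are in bijection with the edges; together with the exterior partition~$\tau_-$ this gives exactly one more tripod variable than there are edges. And by Lemma~\ref{tau-unique} each of the partitions $\tau_\chi$ and $\tau_-$ is realized by a \emph{unique} grove, so the corresponding $Z_\tau$ is the product of the conductances of the edges of that grove.

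Write $v_\chi$ for the vertical conductance at a crossing~$\chi$ (so the horizontal conductance there is $1/v_\chi$), and let $\mathrm{Comb}(\chi)$ be the set of crossings lying on the spine or a tooth of the comb based at~$\chi$, with $\mathrm{Comb}$ of the exterior partition taken to be~$\emptyset$. The unique grove $G_-$ of type~$\tau_-$ selects, at every crossing, the horizontal one of the two objects (edge or dual edge), while the unique grove $G_\chi$ of type~$\tau_\chi$ agrees with it outside $\mathrm{Comb}(\chi)$ and selects the vertical object at each crossing of $\mathrm{Comb}(\chi)$. Comparing the two groves crossing by crossing, and using that horizontal and vertical conductances at any crossing are reciprocal, one finds that each crossing $\chi'\in\mathrm{Comb}(\chi)$ contributes a factor $v_{\chi'}$ to the ratio of grove weights---whether its primal edge is the horizontal or the vertical one---while crossings outside the comb contribute nothing. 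Hence
\begin{equation}\label{ZoverZ-comb}
\frac{Z_{\tau_\chi}}{Z_{\tau_-}}\;=\;\prod_{\chi'\in\mathrm{Comb}(\chi)}v_{\chi'}\,,
\end{equation}
and the same computation in the dual network---where horizontal and vertical are interchanged, but so are the corresponding conductance factors---gives $Z^*_{\tau^*_\chi}/Z^*_{\tau^*_-}=\prod_{\chi'\in\mathrm{Comb}(\chi)}v_{\chi'}$ as well.

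By \eqref{ZoverZ-comb}, both equalities in \eqref{c=ZZZZ} now reduce to the single multiset identity $\mathrm{Comb}(e)\uplus\mathrm{Comb}(f)=\mathrm{Comb}(a)\uplus\mathrm{Comb}(b)\uplus\{e\}$. To prove this, let $T(\psi)$ be the set of crossings on the up-right strand from~$\psi$, including~$\psi$ itself. The spine of the comb at~$e$ consists of~$e$ together with the spine of the comb at~$a$, and its teeth are $T(e)$ together with the teeth of the comb at~$a$, so $\mathrm{Comb}(e)=T(e)\sqcup\mathrm{Comb}(a)$. Since $b$ is the first crossing met on the up-right strand from~$e$, that strand runs straight through~$b$, whence $T(e)=\{e\}\sqcup T(b)$; and likewise $\mathrm{Comb}(b)=T(b)\sqcup\mathrm{Comb}(f)$. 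Substituting, $\mathrm{Comb}(e)\uplus\mathrm{Comb}(f)=\{e\}\uplus T(b)\uplus\mathrm{Comb}(a)\uplus\mathrm{Comb}(f)=\{e\}\uplus\mathrm{Comb}(a)\uplus\mathrm{Comb}(b)$, as required. When $a$, $b$, or $f$ fails to exist the corresponding comb is empty and the identity persists, which is precisely the convention made in the theorem. Combining with \eqref{ZoverZ-comb} gives $Z_{\tau_e}Z_{\tau_f}/(Z_{\tau_a}Z_{\tau_b})=v_e=c_e$, and the same for the dual tripod variables.

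The main obstacle is making the two comb decompositions $\mathrm{Comb}(e)=T(e)\sqcup\mathrm{Comb}(a)$ and $\mathrm{Comb}(b)=T(b)\sqcup\mathrm{Comb}(f)$ genuinely disjoint---equivalently, verifying that (part of) a tooth of a comb never re-enters the sub-comb rooted higher up its spine. This requires the geometry of the rectilinear strand diagrams arising from Dyck tilings, notably that two strands of a minimal network cross at most once and that the strands making up a comb are nested monotonically, rather than the combinatorics of the strand matching alone. The same input gives the distinctness of the partitions $\tau_\chi$ (distinct combs yield distinct monomials in \eqref{ZoverZ-comb}), confirming that the number of tripod variables is exactly one more than the number of edges. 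Once this case analysis is in place, the remaining steps are the elementary manipulations above.
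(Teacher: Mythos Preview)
Your argument is correct and is essentially the same as the paper's: both use Lemma~\ref{tau-unique} to identify each $Z_\tau$ with a monomial, then compare combs to see that $Z_{\tau_e}/Z_{\tau_a}$ is the product of vertical conductances along the tooth $T(e)$ and likewise for $Z_{\tau_b}/Z_{\tau_f}$, and finally use $T(e)=\{e\}\sqcup T(b)$. The only cosmetic differences are that you route everything through $Z_{\tau_-}$ and the multiset identity on $\mathrm{Comb}$'s, and that for the dual equality you repeat the grove computation in~$\G^*$ whereas the paper uses the one-line observation $Z^*_{\tau^*}=Z_\tau/\prod_e c(e)$; your caution about the disjointness of $T(e)$ from $\mathrm{Comb}(a)$ is well placed but is exactly the same geometric input the paper silently uses when it says ``the comb based at $e$ has one more tooth.''
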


\begin{proof}
  The first statement follows since each edge of $\G$ is associated
  with a crossing in the strand diagram; thus there is one tripod
  variable for each edge, as well as the exterior tripod variable.

  By construction, for each tripod partition $\tau$, there is only
  one grove of partition type $\tau$.  Hence $Z_\tau$ is the product of the
  conductances of the edges in this grove.

  Let $e$ be a crossing of the strand diagram, and let $a$ be the first crossing
  up and left from~$e$.  Comparing the combs based at $e$ and $a$, the comb based
  at $e$ has one more tooth, which starts at $e$.  Thus $Z_{\tau_e}/Z_{\tau_a}$ is
  the product of the vertical conductances of all crossings along
  the up-and-right strand segment starting at $e$.  (If there is no such crossing $a$,
  then the same statement holds when we replace $\tau_a$ with $\tau_-$.)
  The ratio $Z_{\tau_b}/Z_{\tau_f}$ has a similar interpretation, except starting
  at the crossing $b$ (if crossing $b$ exists).  Since crossing $b$ is the
  first crossing on the up-and-right strand segment from crossing $e$, the ratio
  $(Z_{\tau_e}/Z_{\tau_a})/(Z_{\tau_b}/Z_{\tau_f})$ is just the vertical conductance at $e$
  (with $\tau_x$ replaced by $\tau_-$ if crossing $x$ does not exist).

  The formula in terms of the dual groves follows because
  \[Z^*_{\tau^*} = Z_\tau / (\text{product of all edge weights})\]
  for each $\tau$.
\end{proof}

There is one monomial relation between either the tripod variables or
the dual-tripod variables: Consider any lower-most crossing $\chi$ of
the strand diagram.  The edge associated with $\chi$ is horizontal for
either the network or the dual network; for convenience say that it is
horizontal for the network.  The conductance of the associated edge
does not appear as a factor in the tripod variable $Z_{\tau_\chi}$.
Since each edge weight is the biratio of tripod variables, each tripod
variable is a Laurent monomial in the set of tripod variables.
Because $\chi$ is a lowermost crossing, its tripod variable appears in
the biratio of only the associated edge.  Thus $Z_{\tau_\chi}$ is a
Laurent monomial in the \textit{other\/} tripod variables, giving a
nontrivial monomial relation.

For the purpose of reconstruction, we compute \textit{normalized\/} tripod variables
using the response matrix $L$, as explained below.
Combined with Theorem~\ref{biratio-tripod}, this will give an explicit expression for each
conductance as a rational function of $L$.

We first need to compute the response matrix for the
enlarged graph in terms of the original response matrix
$L$.  The enlargement of the graph $\G$ to $\tilde\G_\chi$ can be expressed
as a sequence of steps which either (1) adjoin an edge connecting
adjacent nodes, (2) internalize a node, i.e., demoting to a regular
(internal) vertex, or (3) insert a new node between a pair of adjacent
nodes, with no edges connecting it to the rest of the network.
Let $\tilde\G$ denote an intermediate graph in this sequence of steps,
and let $\tilde L$ denote its response matrix.
Adjoining an edge of conductance $c$ between nodes $i$ and $i+1$
increases $\tilde L_{i,i+1}=\tilde L_{i+1,i}$ by $c$ and decreases
$\tilde L_{i,i}$ and $\tilde L_{i+1,i+1}$ by $c$.
Inserting a new node
inserts an all-zero row and column into $\tilde L$.  When internalizing
a node, the response matrix is updated by Schur reduction \cite{zhang}.

With $e$, $a$, $b$, and $f$ denoting the four crossings from Theorem~\ref{biratio-tripod},
the graph extensions $\tilde\G_a=\tilde\G_e$ are the same, as are $\tilde\G_f=\tilde\G_b$.
From Lemma~\ref{tau-unique}, we see that for any choice of the edge conductances in
the graph extensions, we have
\begin{equation}
\frac{Z_{\tau_e}}{Z_{\tau_a}} = 
\frac{Z_{\tilde\tau_e}(\tilde\G_e)/Z_{unc}(\tilde\G_e)}{Z_{\tilde\tau_a}(\tilde\G_a)/Z_{unc}(\tilde\G_a)}
\quad\quad\text{and}\quad\quad
\frac{Z_{\tau_f}}{Z_{\tau_b}} = 
\frac{Z_{\tilde\tau_f}(\tilde\G_f)/Z_{unc}(\tilde\G_f)}{Z_{\tilde\tau_b}(\tilde\G_b)/Z_{unc}(\tilde\G_b)}\,,
\end{equation}
since the weights coming from the new edges in the graph extension cancel.
If one or more of the crossings $a$, $b$, or $f$ do not exist, so that $\tau_-$ is used in equation~\eqref{c=ZZZZ},
we choose the corresponding graph extensions for $\tau_-$ so that
these above equalities hold.
Consequently, we can compute these above ratios of tripod variables
using a ratio of the Pfaffians as described in Section~\ref{sec:tripod-pf},
since the $\tilde Z_{unc}$ terms cancel.
These ratios of tripod variables are sufficient to compute the edge conductances using \eqref{c=ZZZZ}.

When doing computations, it is easier to consider the limiting case when the new edges in the
graph extensions have infinite conductance.

\subsection{Reconstruction example} \label{sec:reconstruction}

It is instructive to work out the reconstruction problem for a small example.  Consider the following circular-planar network with 5 nodes, 2 internal vertices, and 7 edges together with the response matrix $L$:
\[
\raisebox{-0.45\height}{\includegraphics[width=1.5in]{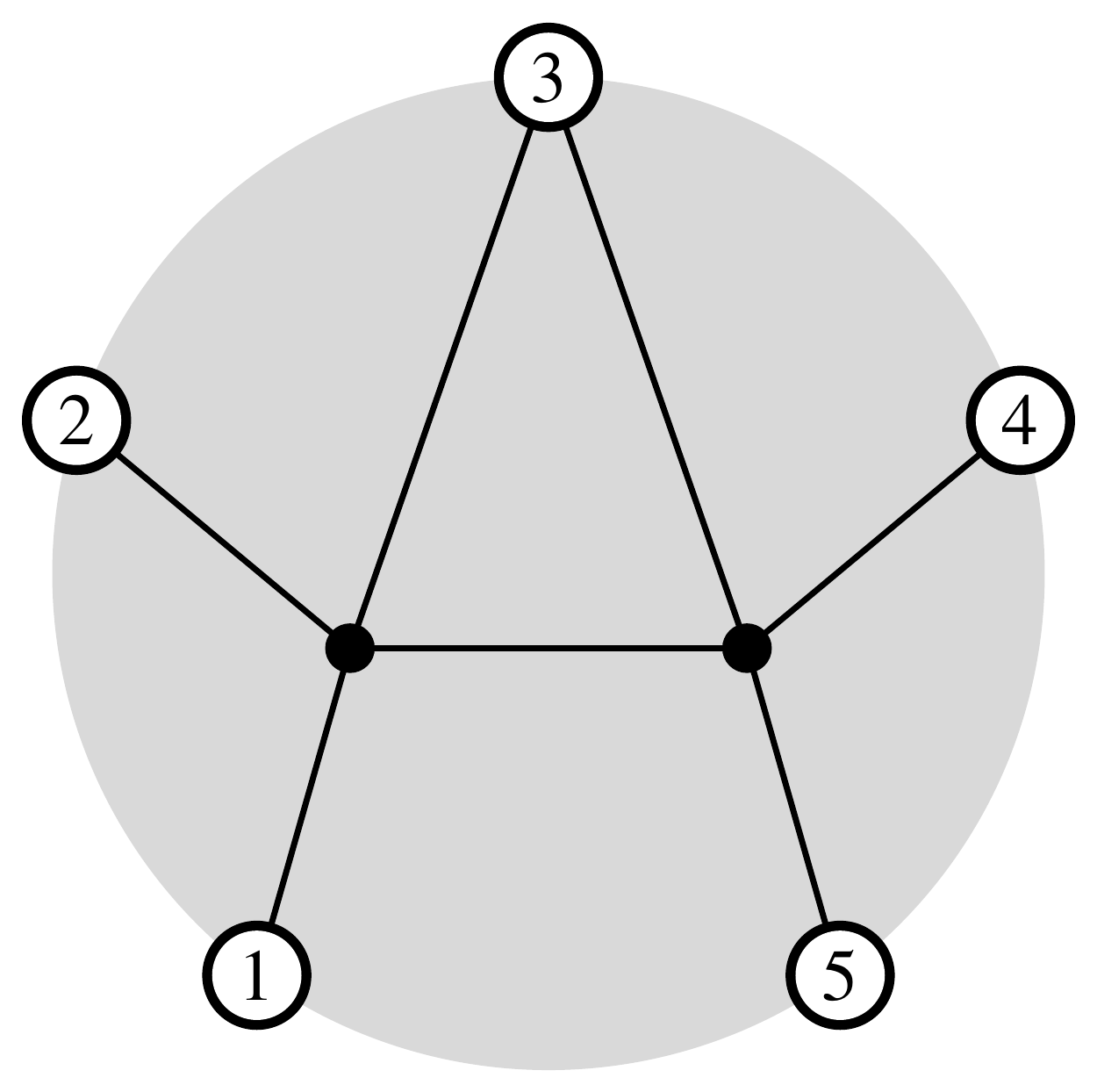}}\quad\quad\quad\quad 
L=\begin{bmatrix}
 -100 & 40 & 45 & 10 & 5 \\
 40 & -88 & 36 & 8 & 4 \\
 45 & 36 & -99 & 12 & 6 \\
 10 & 8 & 12 & -40 & 10 \\
 5 & 4 & 6 & 10 & -25 \\
\end{bmatrix}
\]
The first step is to draw the strand diagram:
\[
\includegraphics[width=1.5in]{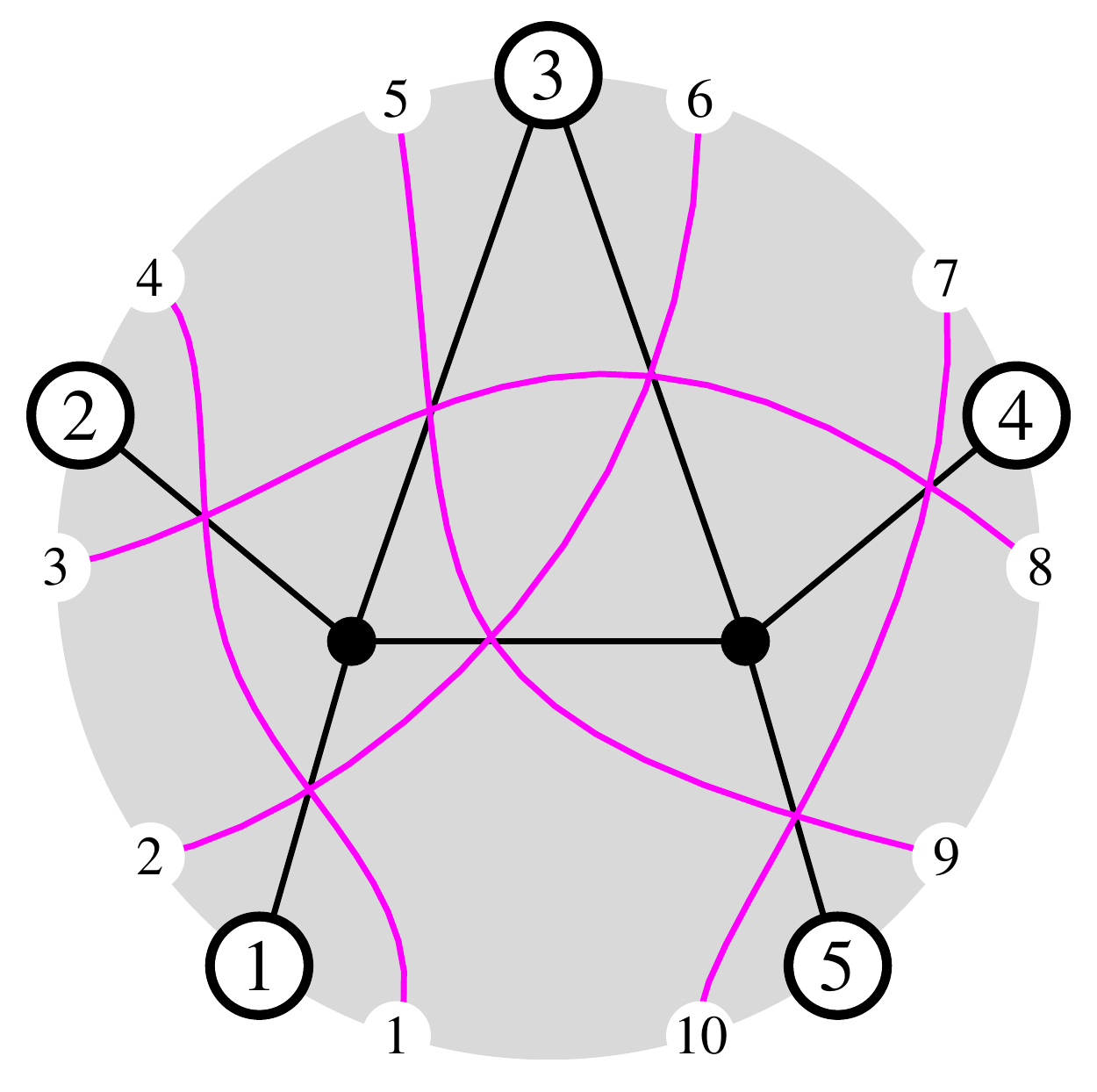}
\]
We see that the above network is minimal and has strand matching 
\[\{\{1,4\},\{2,6\},\{3,8\},\{5,9\},\{7,10\}\}.\]
The associated Dyck tiling and standard minimal network are
\[
\includegraphics[width=1.4in]{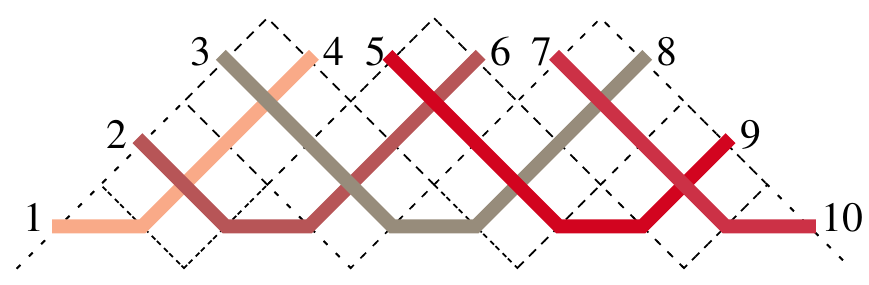}\quad\quad
\includegraphics[width=1.4in]{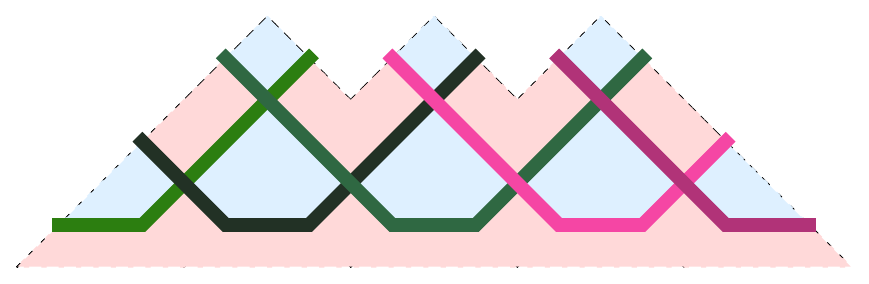}\quad\quad
\includegraphics[width=1.4in]{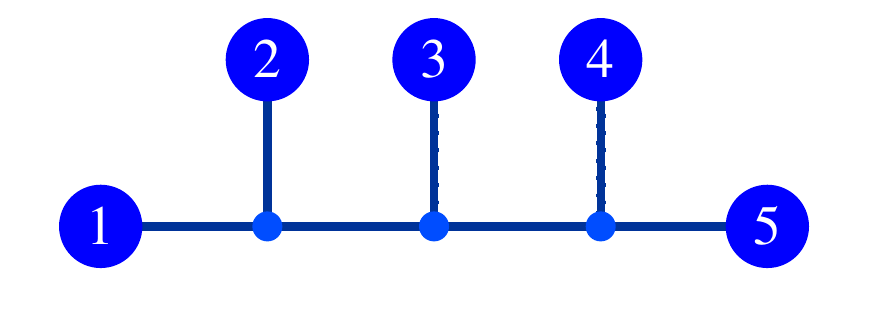}
\]
This is a small example for which all the Dyck tiles are single-box tiles,
but a more complicated example might look like Fig.~\ref{strands}.
Next we carry out the reconstruction for the standard minimal network.

\[
\raisebox{-.4\height}{\includegraphics[width=1.2in]{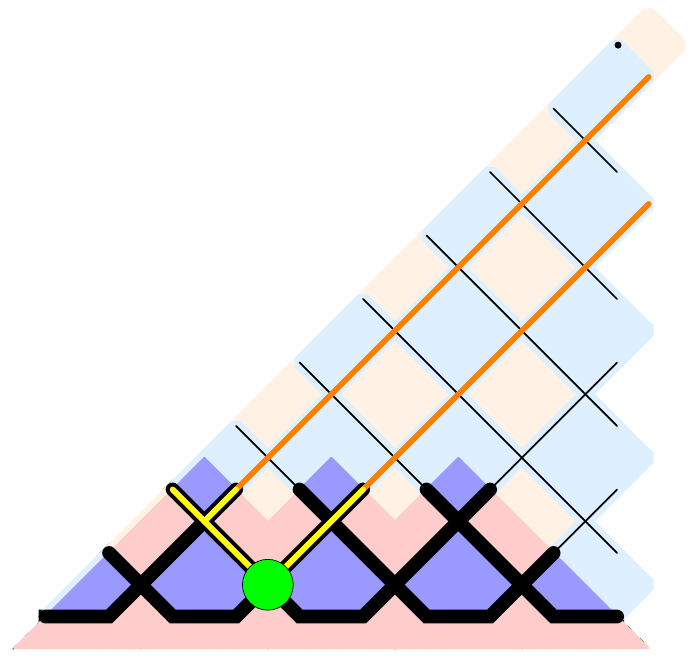}}\quad\quad
\raisebox{-.4\height}{\includegraphics[width=1.2in]{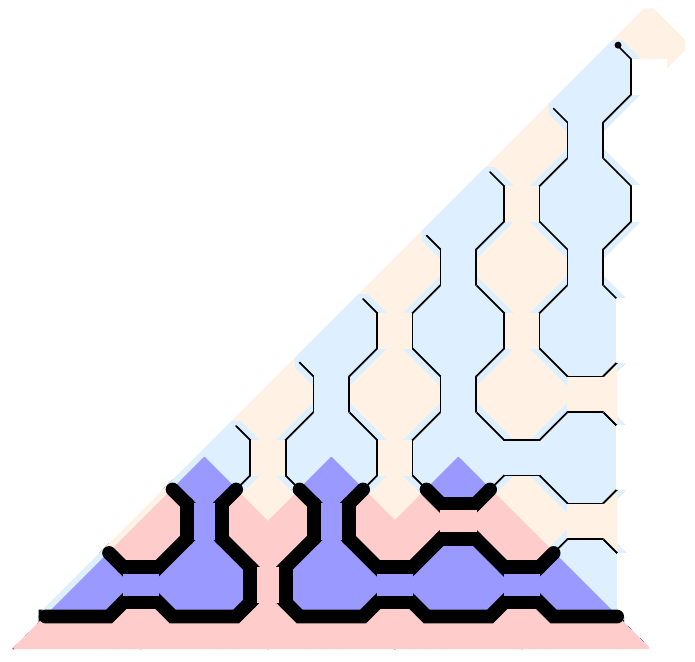}}\quad\quad
\Pf\begin{bmatrix}
  0 & 40 & 45 & 5 \\
  & 0 & 0 & 4 \\
  & & 0 & 6 \\
  & & & 0 \\
\end{bmatrix}
=60
\]

For the next two crossings we glue nodes $3$ and $4$ together to form node $3\&4$,
with the new response matrix being
\[\footnotesize\begin{bmatrix}
-100 & 40 & 55 & 5 \\
 40 & -88 & 44 & 4 \\
 55 & 44 & -115 & 16 \\
 5 & 4 & 16 & -25 \\
\end{bmatrix}.\]
Using the dual tripod Pfaffian formula with $R=\{2\}$ and $B=\{5\}$ with singleton nodes $1$ and $3\&4$, we find
\[
\raisebox{-.4\height}{\includegraphics[width=1.2in]{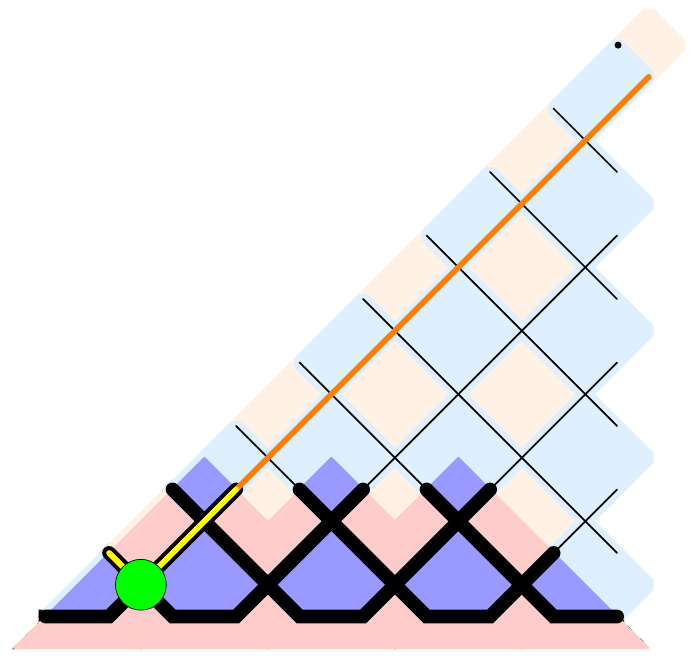}}\quad\quad
\raisebox{-.4\height}{\includegraphics[width=1.2in]{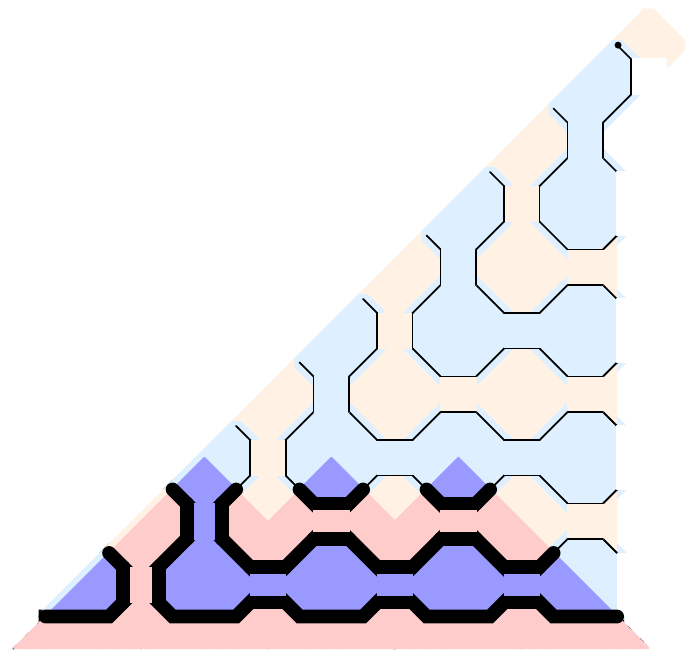}}\quad\quad
\Pf\begin{bmatrix}
0&4\\&0\\
\end{bmatrix}
=4.
\]
Using the tripod Pfaffian formula (with one singleton node $3\&4$) we obtain
\[
\raisebox{-.4\height}{\includegraphics[width=1.2in]{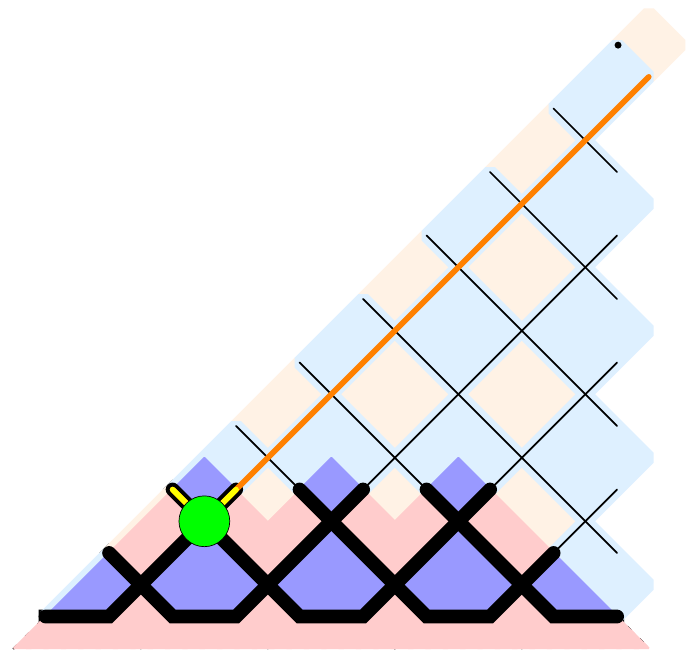}}\quad\quad
\raisebox{-.4\height}{\includegraphics[width=1.2in]{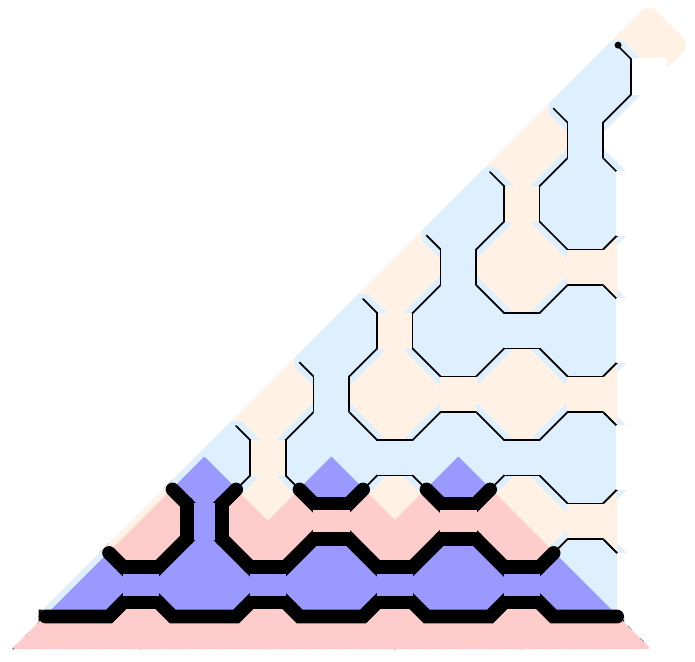}}\quad\quad
\Pf\begin{bmatrix}
 0 & 40 & -5 & 95 \\
 -40 & 0 & 4 & 4 \\
 5 & -4 & 0 & 5 \\
 -95 & -4 & -5 & 0 \\
\end{bmatrix}
= 600
\]

The next two crossings use the original response matrix
\[
\raisebox{-.4\height}{\includegraphics[width=1.2in]{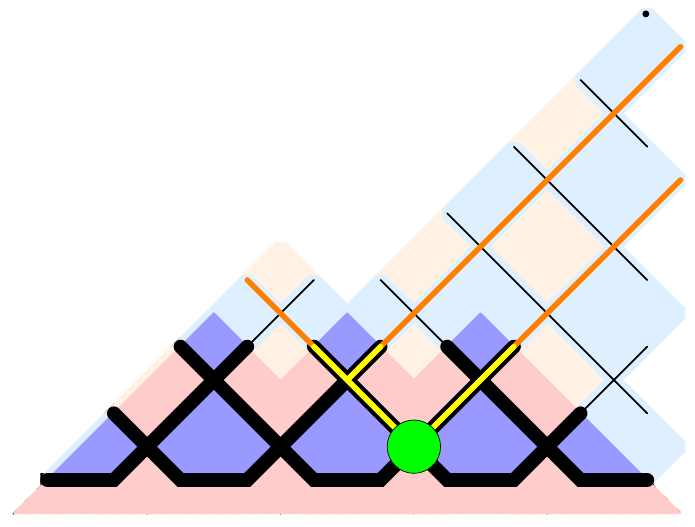}}\quad\quad
\raisebox{-.4\height}{\includegraphics[width=1.2in]{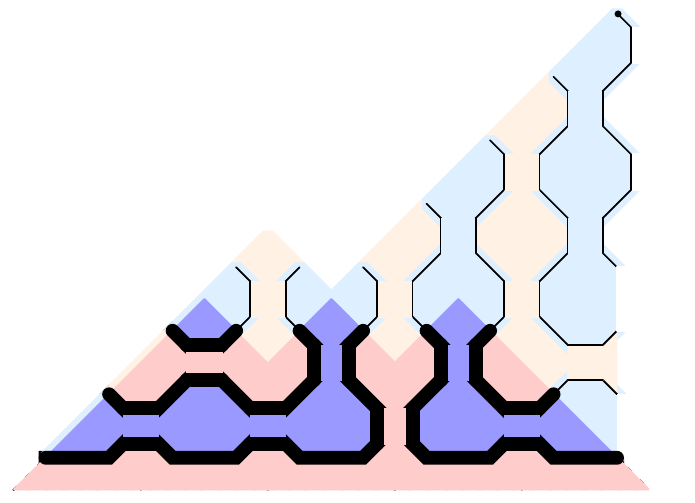}}\quad\quad
\Pf\begin{bmatrix}
 0 & 45 & 10 & 5 \\
 -45 & 0 & 0 & 6 \\
 -10 & 0 & 0 & 10 \\
 -5 & -6 & -10 & 0 \\
\end{bmatrix}
=390
\]

\[
\raisebox{-.4\height}{\includegraphics[width=1.2in]{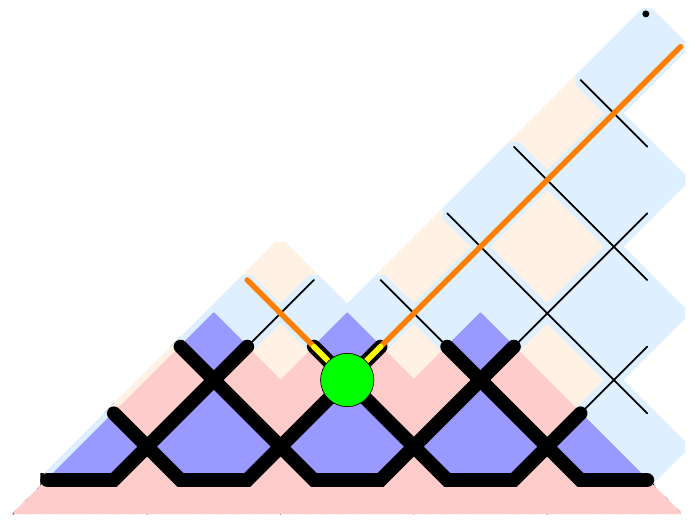}}\quad\quad
\raisebox{-.4\height}{\includegraphics[width=1.2in]{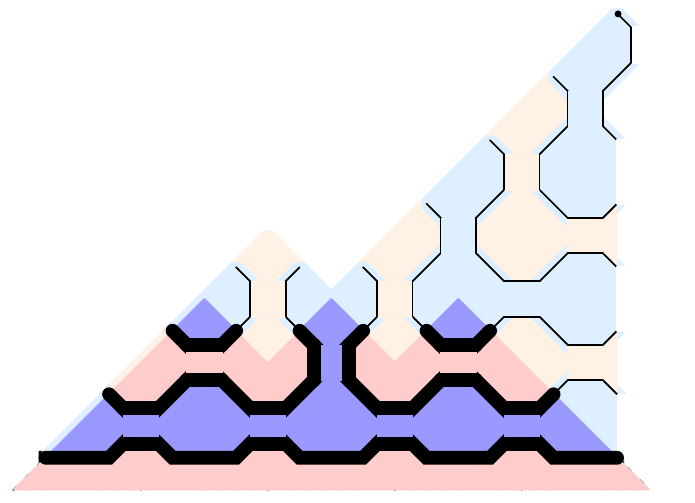}}\quad\quad
\Pf\begin{bmatrix}
 0 & 45 & -5 & 55 \\
 -45 & 0 & 6 & 6 \\
 5 & -6 & 0 & 9 \\
 -55 & -6 & -9 & 0 \\
\end{bmatrix}
=765
\]

For the next two crossings we glue nodes 2 and 3 together to obtain reponse matrix
\[\footnotesize\begin{bmatrix}
-100 & 85 & 10 & 5 \\
 85 & -115 & 20 & 10 \\
 10 & 20 & -40 & 10 \\
 5 & 10 & 10 & -25 \\
\end{bmatrix}\]

\[
\raisebox{-.4\height}{\includegraphics[width=1.2in]{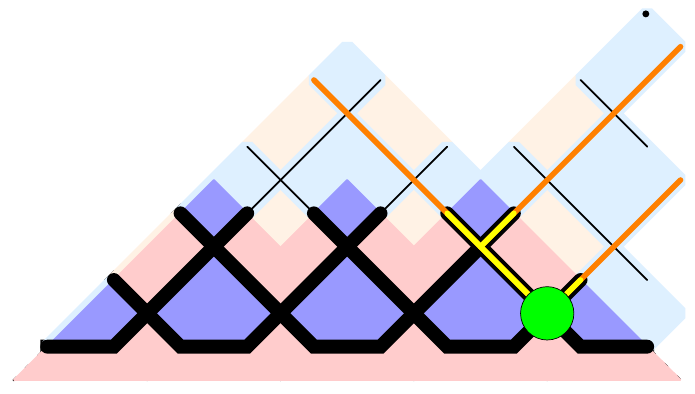}}\quad\quad
\raisebox{-.4\height}{\includegraphics[width=1.2in]{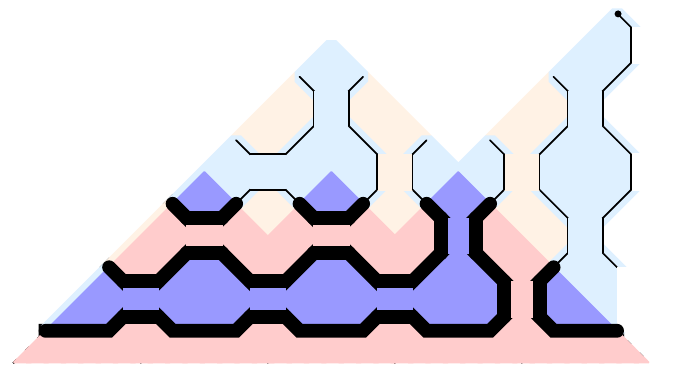}}\quad\quad
\Pf\begin{bmatrix}
0 & 10 \\
& 0\\
\end{bmatrix}
=10
\]

\[
\raisebox{-.4\height}{\includegraphics[width=1.2in]{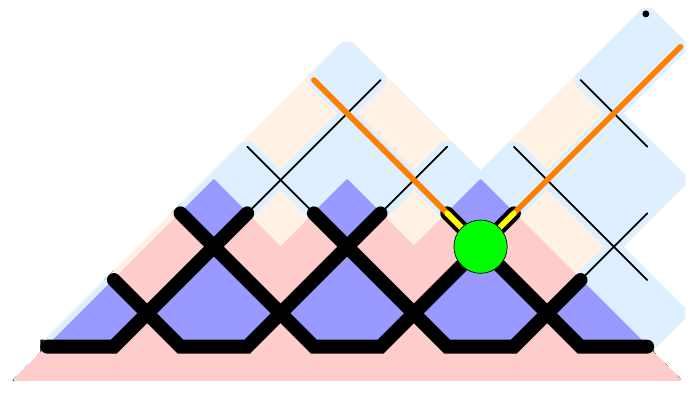}}\quad\quad
\raisebox{-.4\height}{\includegraphics[width=1.2in]{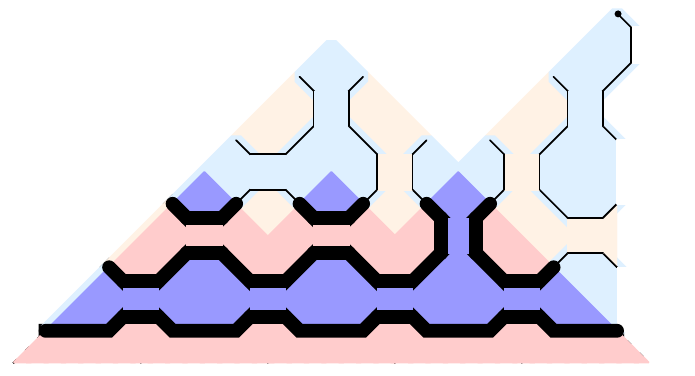}}\quad\quad
\Pf\begin{bmatrix}
0 & 10 & -5 & 10 \\
 -10 & 0 & 10 & 10 \\
 5 & -10 & 0 & 15 \\
 -10 & -10 & -15 & 0 \\
\end{bmatrix}
=300
\]

For each of the exterior partitions we get $\Pf\footnotesize\begin{bmatrix}0&5\\&0\end{bmatrix}=5$.

To summarize, the normalized tripod variables are
\[
\includegraphics[width=1.5in]{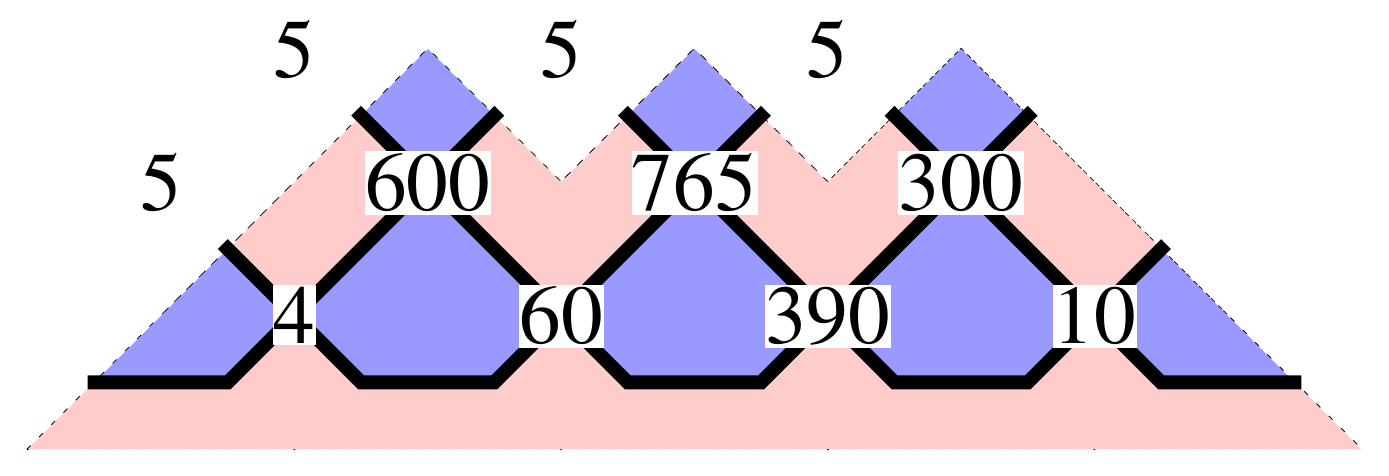}
\]
the ratios of the tripod variables (along the up-and-left strand) are
\[
\includegraphics[width=1.5in]{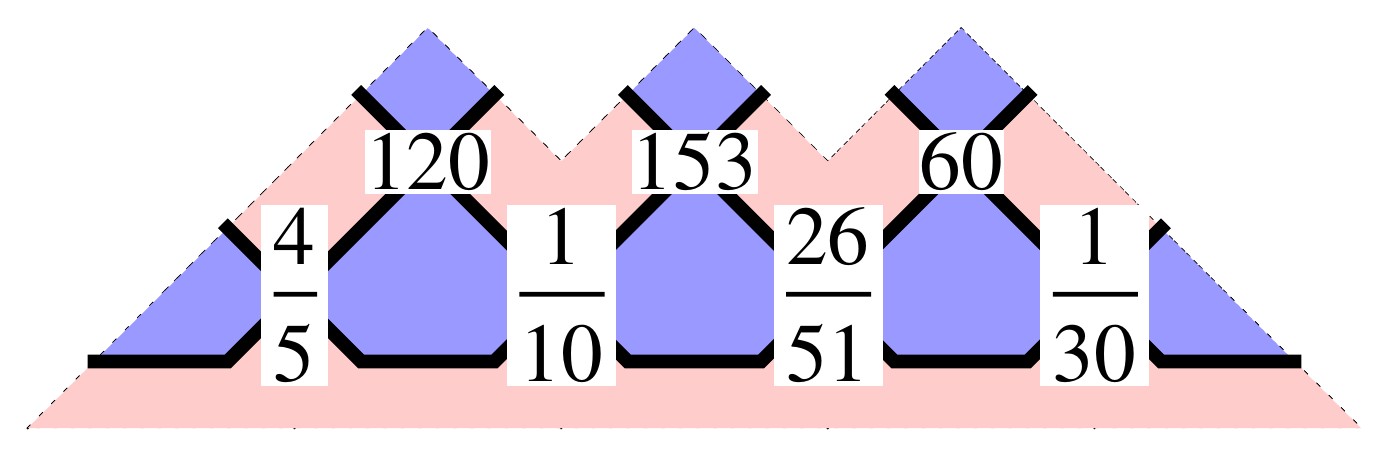}
\]
and the biratios, which equal the vertical conductances, are
\[
\includegraphics[width=1.5in]{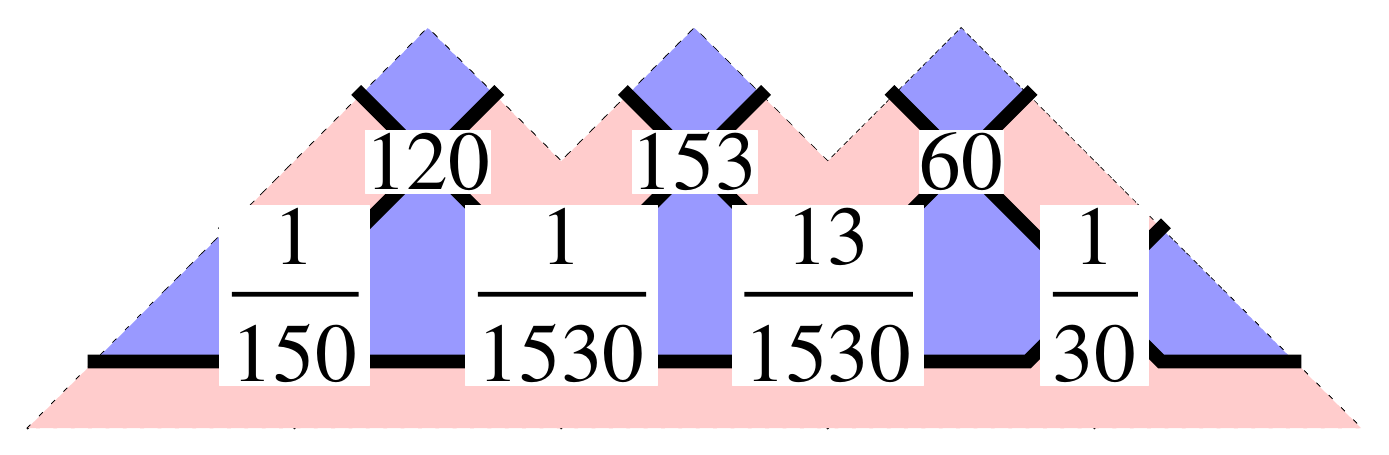}
\]
These are the conductances for the vertical edges and the reciprocal conductances for the horizontal edges,
so the primal network's conductance are
\[
\includegraphics[width=1.5in]{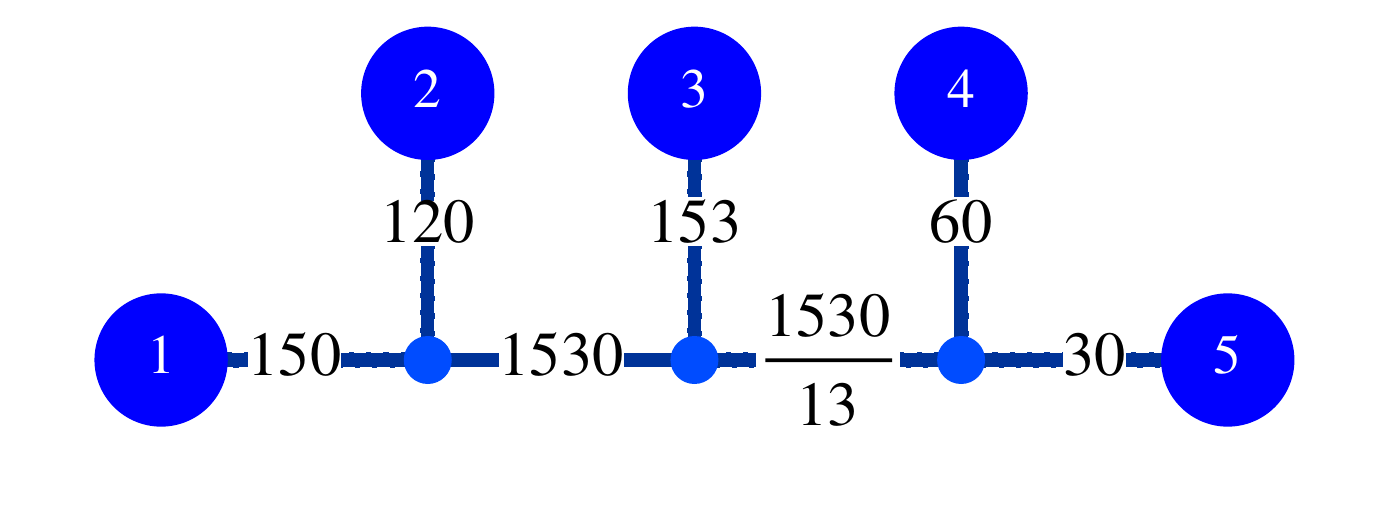}
\]
Upon doing a $\Delta$-Y transformation, we obtain the conductances of the original network:
\[
\includegraphics[width=1.5in]{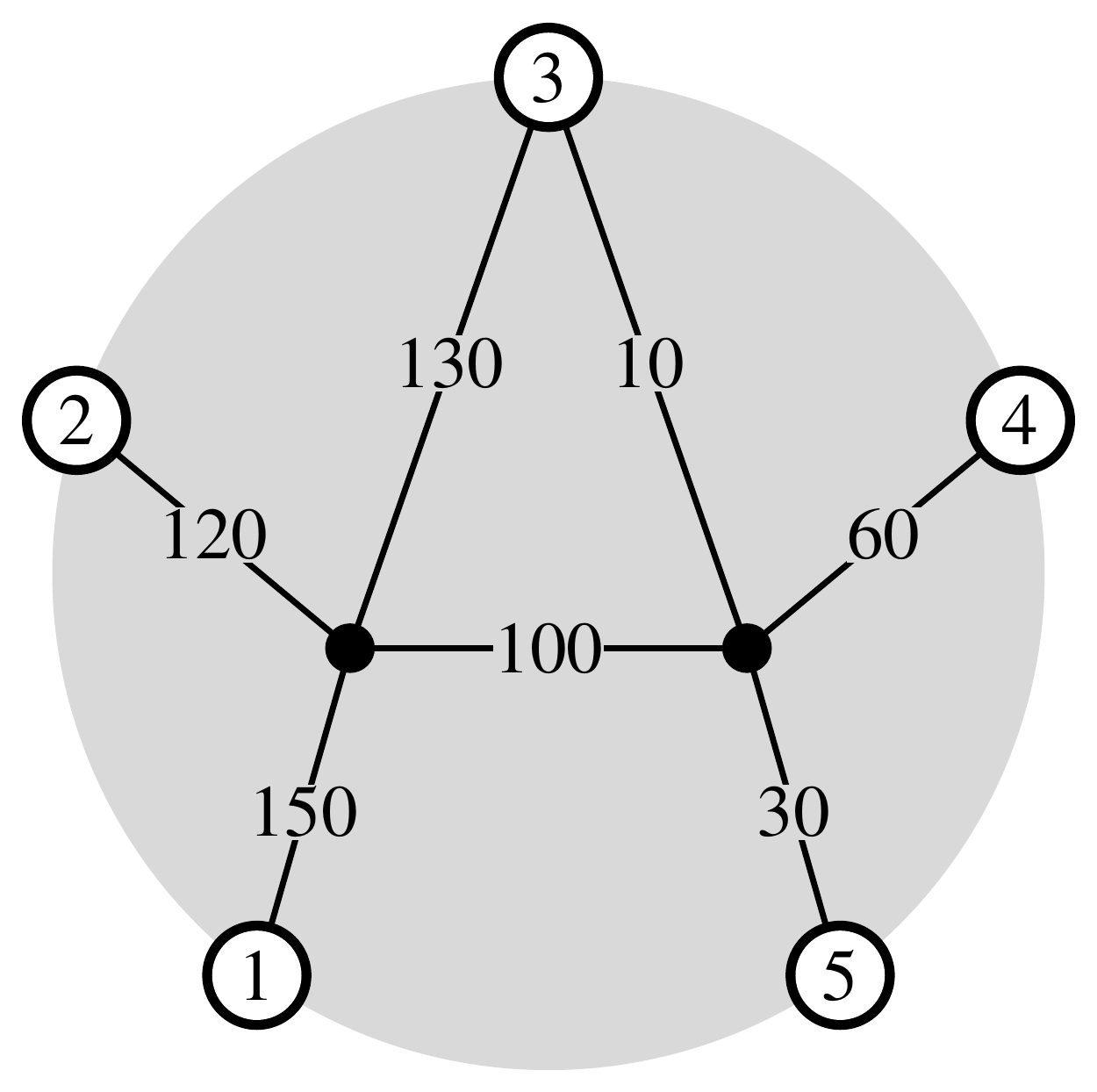}
\]

\subsection{B variables}
Above we defined the tripod variables of a standard network.  Here we associate a different set of variables,
called \textbf{B variables}, to the vertices and faces of an arbitrary minimal network,
or equivalently, to the cells of its associated strand diagram.
The defining characteristic of $B$ variables is that each conductance $c=c(e)$ satisfies
\be\label{BBBB}c=\frac{B_v B_{v'}}{B_f B_{f'}},\ee
where $v$ and $v'$ are the endpoints of edge $e$ and $f$ and $f'$ are the adjacent faces.
This biratio formula is very similar to the biratio formula~\eqref{c=ZZZZ} for the tripod variables,
but the tripod variables do not comprise a set of $B$~variables because
there are different tripod variables along different portions of the outer face,
and there can only be one $B$~variable per face.  The $B$~variables are not
uniquely defined by \eqref{BBBB}, since we can multiply the $B$ variables on the cells to one
side of strand without affecting \eqref{BBBB}, but these are the only degrees of freedom.
We show here how to construct a valid set of $B$ variables from a minimal strand diagram.

\begin{figure}[b!]
\begin{subfigure}[c]{.48\textwidth}\centerline{\includegraphics[width=.7\textwidth]{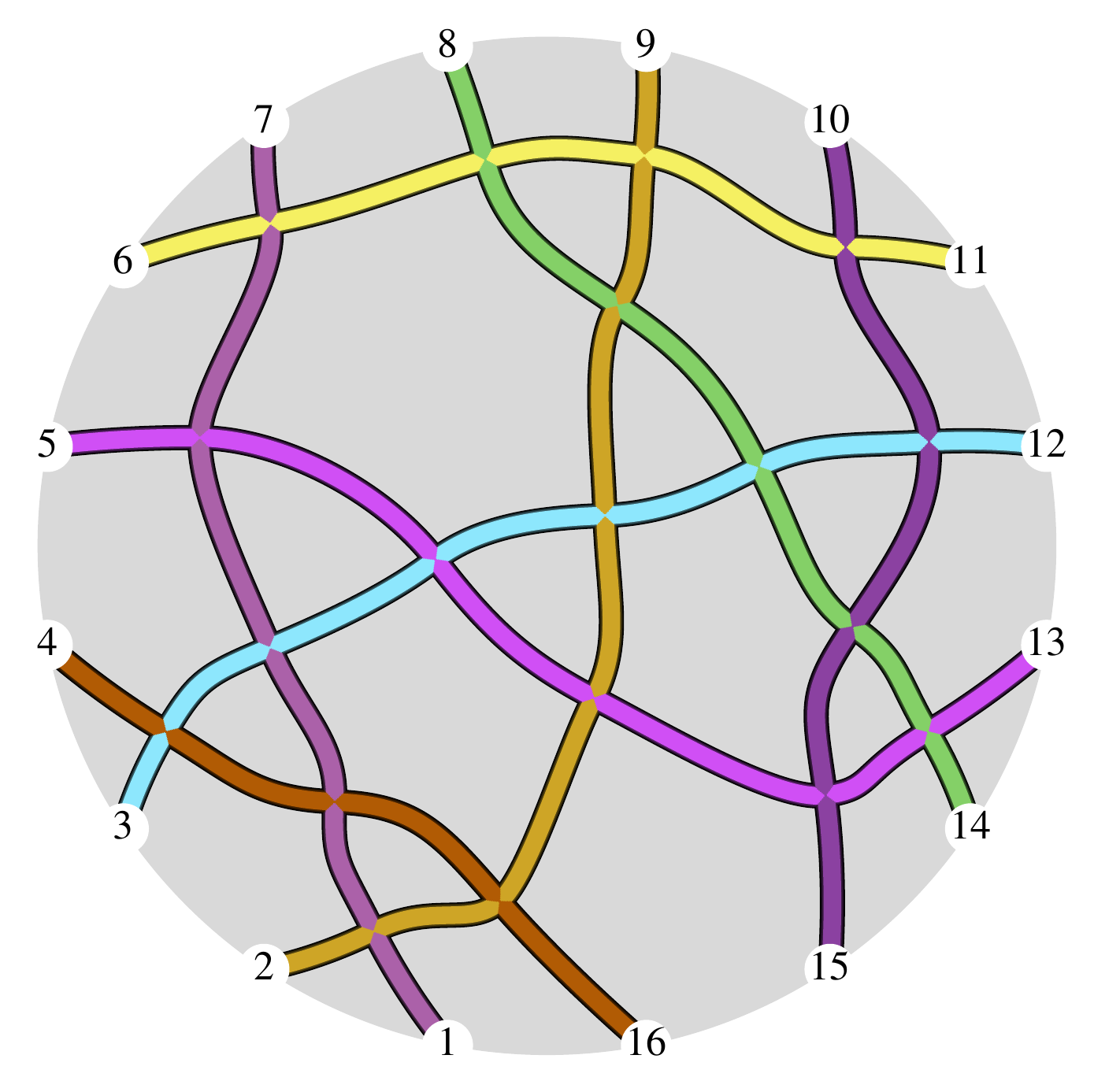}}\caption{Minimal strand diagram (from Fig.~\ref{strands-network})\label{fig:minimal-nonmonotone}}\end{subfigure}
\hfill
\begin{subfigure}[c]{.48\textwidth}\includegraphics[width=\textwidth]{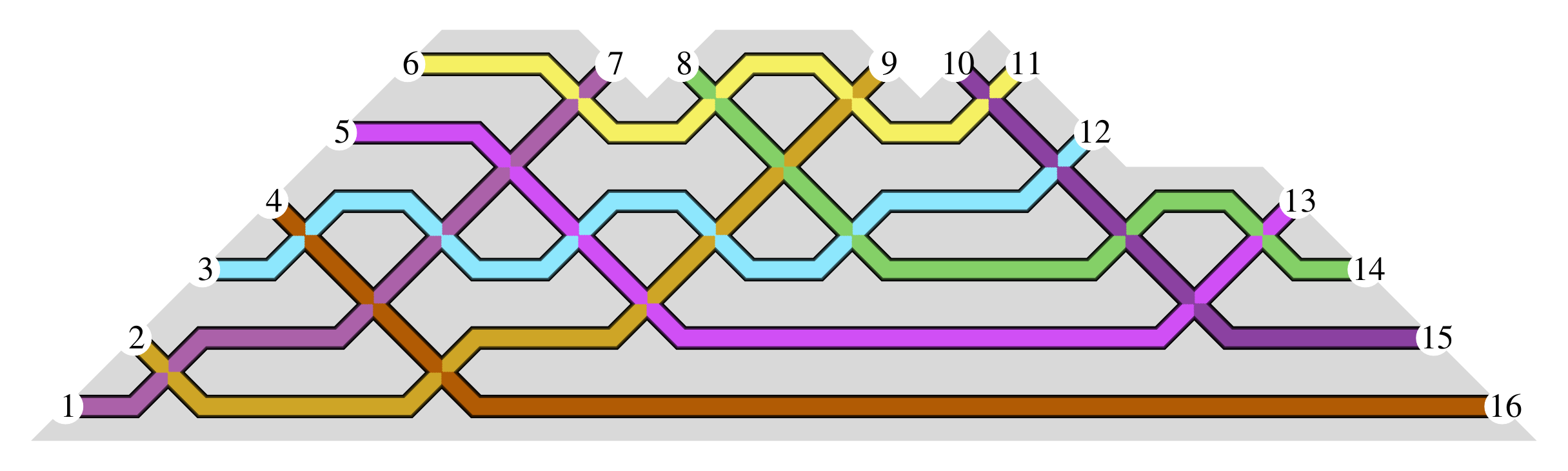}\caption{Strand diagram drawn rectilinearly with each strand going from left to right monotonically ($x$-coordinate only increases along path)\label{fig:minimal-monotone}}\end{subfigure}
\begin{subfigure}[t]{.48\textwidth}\includegraphics[width=\textwidth]{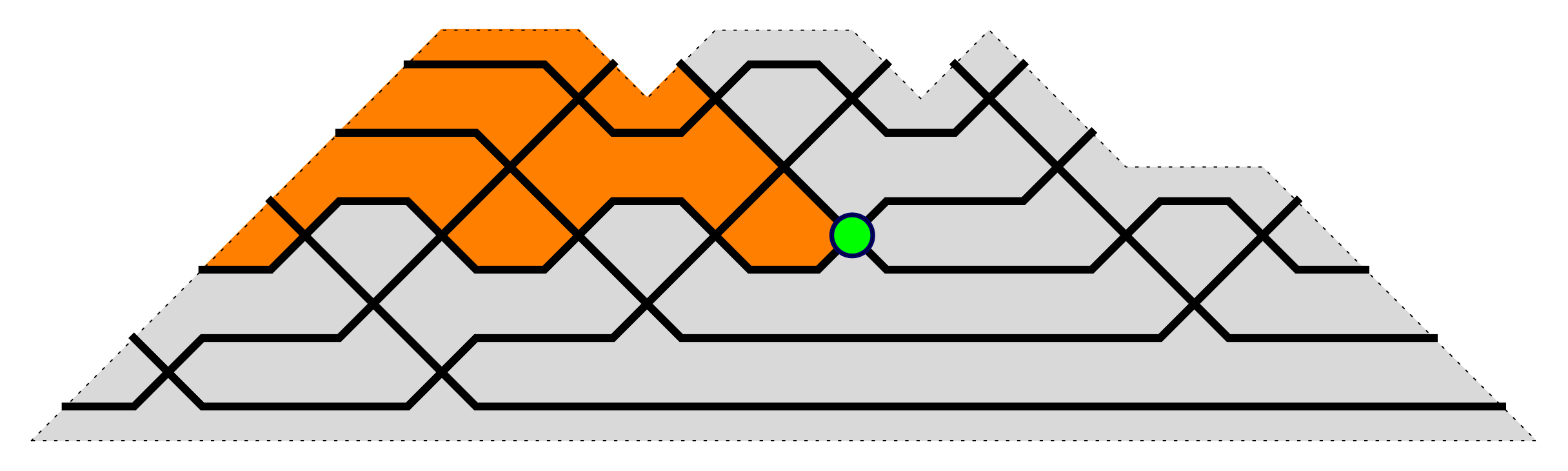}\caption{Region to the left of a crossing\label{fig:left-region}}\end{subfigure}
\hfill
\begin{subfigure}[t]{.48\textwidth}\includegraphics[width=\textwidth]{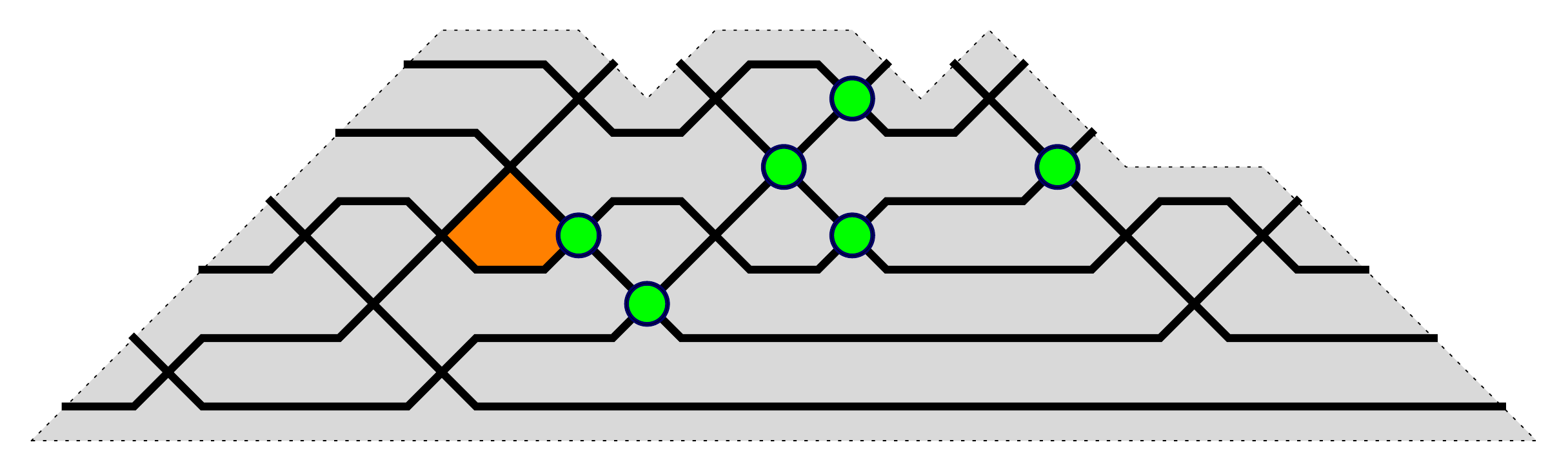}\caption{A cell together with the crossings to which it is to the left}\label{fig:right-crossings}\end{subfigure}
\caption{Construction of the $B$-variables for minimal networks.
There is one $B$-variable for each cell in the strand diagram, which is the product of the horizontal edge conductances
to which it is to the left, when drawn as above.}
\label{dyck-tiling-B}
\end{figure}

We can draw a minimal strand diagram so that each strand $(i,j)$ with $i<j$ goes from $i$ to $j$ monotonically left to right, i.e., without backtracking left, as shown in Figure~\ref{fig:minimal-monotone}.  One way to construct such an embedding is to start with the strand diagram associated to the Dyck tiling of the standard minimal network (Figure~\ref{strands}), in which each strand goes monotonically from left to right, and then perform a sequence of $\Delta$-Y transformations to recover the original minimal network, while preserving the left-to-right monotonicity of the strands, as indicated in Figure~\ref{BYD}.

Consider a crossing between two strands $(i_1,j_1)$ and $(i_2,j_2)$ where $i_1<i_2<j_1<j_2$.  These two strands define four regions, and we call the region between nodes $i_1$ and $i_2$ the \textit{left region}.  In the left-to-right monotone embedding of the strand diagram, the ``left region'' lies entirely to the left of the crossing (Figure~\ref{fig:left-region}), but as a formal definition, the ``left region'' makes sense regardless of how the strands are embedded in the plane, e.g., we could find the left region of a crossing directly from Figure~\ref{fig:minimal-nonmonotone} without the embedding in Figure~\ref{fig:minimal-monotone}.  We define the \textit{horizontal conductance\/} of a crossing to be the conductance of the associated edge in the primal network or dual network, according to whether or not the edge's endpoints lie in the left and right regions of the crossing.  (This is consistent with our earlier definition for standard minimal networks.)

\begin{figure}[t]
\center{\includegraphics[width=.3\textwidth]{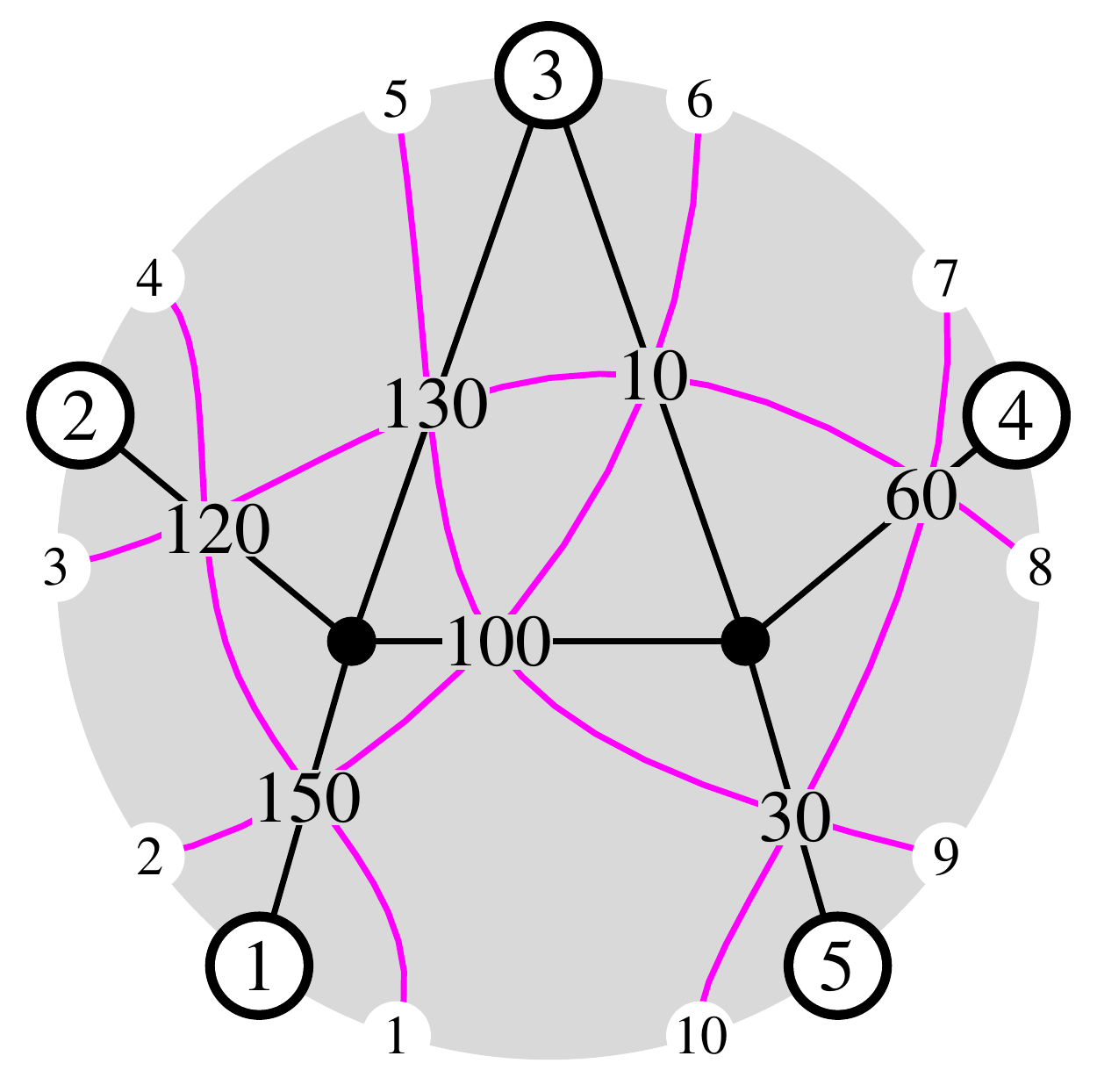}\hfill\includegraphics[width=.3\textwidth]{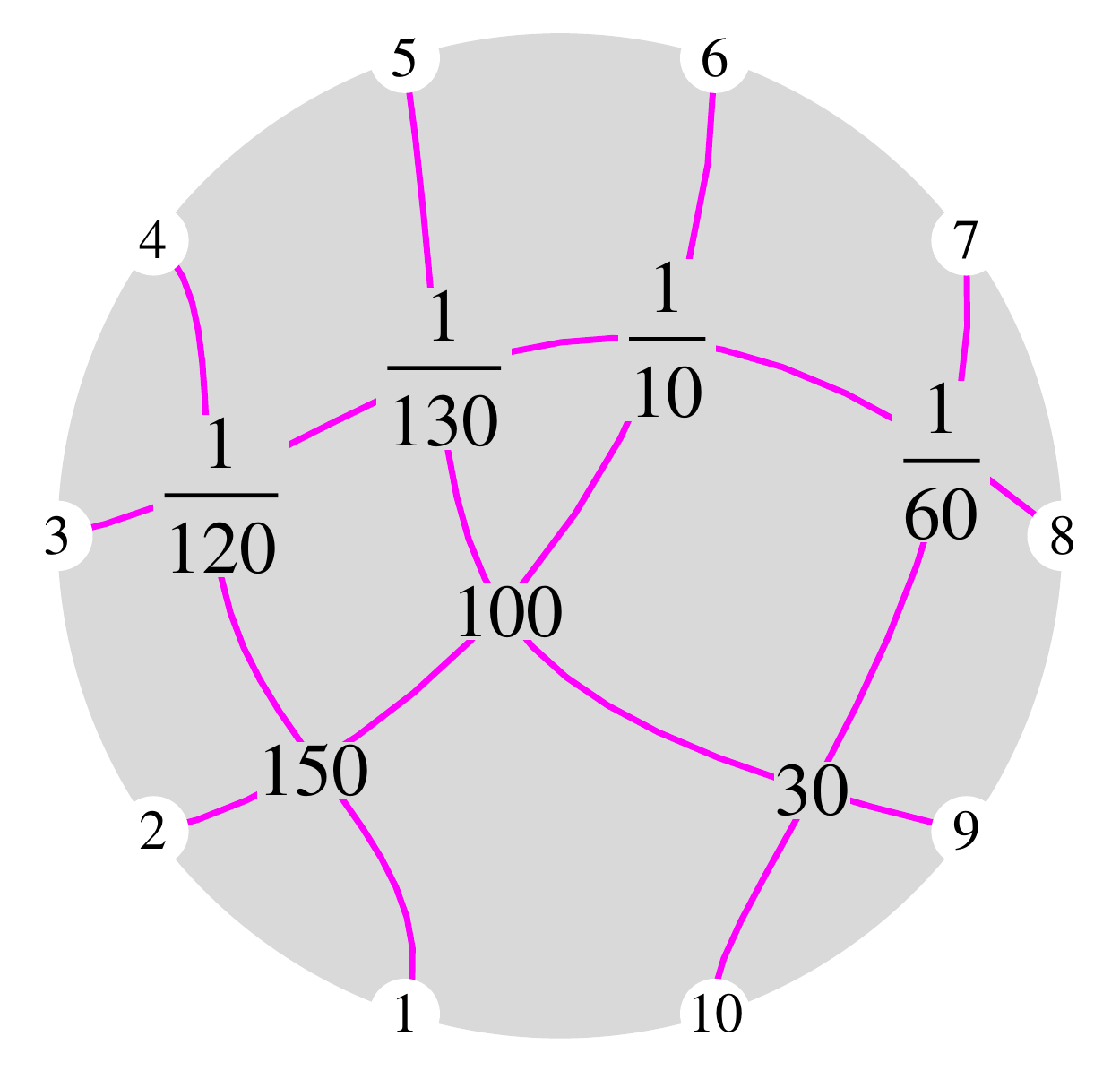}\hfill\includegraphics[width=.3\textwidth]{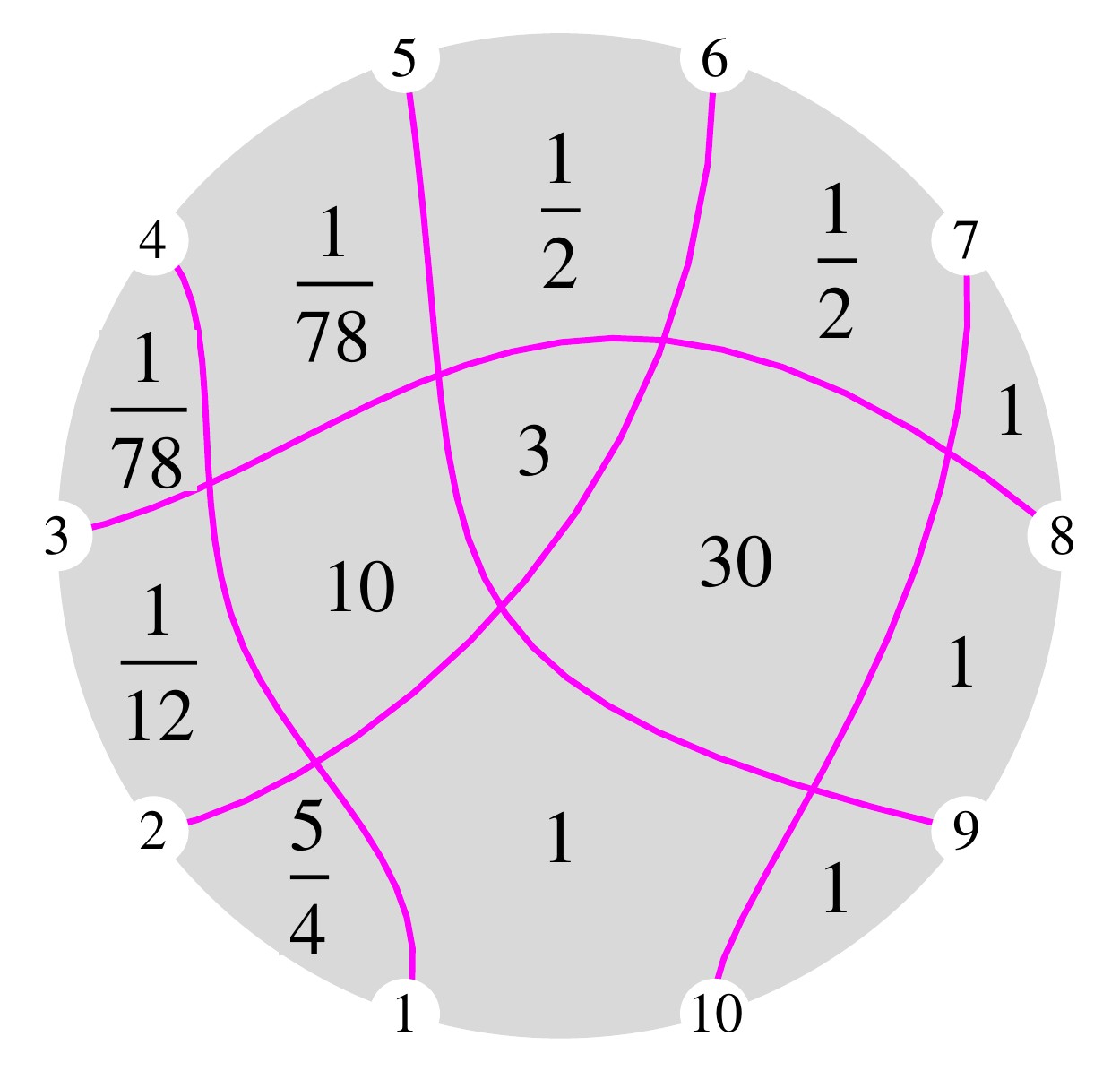}}
\caption{Construction of $B$ variables.  At left is the network, conductances, and strand diagram from the reconstruction example in Section~\ref{sec:reconstruction}.  In the middle is the strand diagram with horizontal conductances.  At right is the strand diagram with $B$ variables.\label{fig:b-variables}}
\vspace{-20pt}
\end{figure}
\enlargethispage{20pt}
\begin{figure}[b!]
\center{\hfill\includegraphics[width=1.55in]{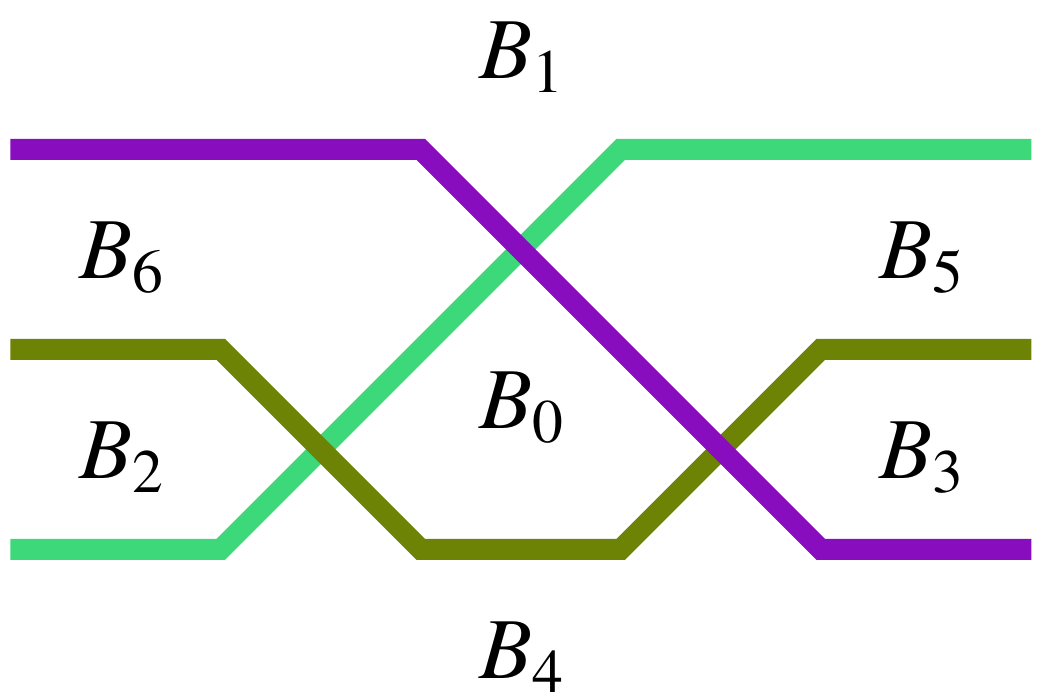}\hfill\includegraphics[width=1.55in]{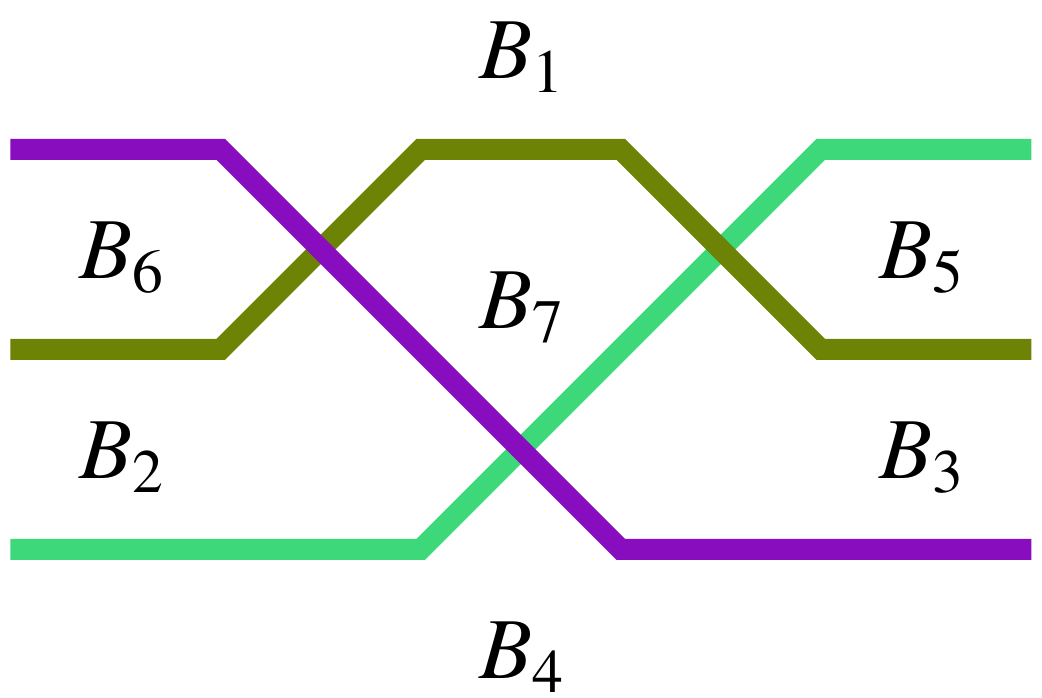}\hfill}
\caption{Transformation of $B$ variables under a Y-$\Delta$ transformation. We have $B_0 B_7=B_1 B_4+B_2 B_5+B_3 B_6$.\label{BYD}}
\end{figure}

For each cell of the minimal strand diagram (corresponding to a face or vertex of the network), consider the set of
crossings for which the cell is part of the left region (see for example Figure~\ref{fig:right-crossings}).  The
product of horizontal conductances on these crossings is the $B$ variable for that cell.
For the reconstruction example in Section~\ref{sec:reconstruction}, the $B$ variables
given by this construction are shown in Figure~\ref{fig:b-variables}.

\begin{proposition}
  With the construction in Figure~\ref{dyck-tiling-B},
  which is further illustrated in Figure~\ref{fig:b-variables},
  the $B$ variables satisfy the biratio formula \eqref{BBBB}.
\end{proposition}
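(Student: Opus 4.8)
The plan is to expand both sides of \eqref{BBBB} as Laurent monomials in the horizontal conductances $\{h_\chi\}$ of the crossings of the strand diagram and observe that all but one factor cancels. Fix an edge $e$ of $\G$, corresponding to a crossing $\chi_0$ of two strands $s_1,s_2$; let $v,v'$ be the endpoints of $e$ and $f,f'$ the two adjacent faces. In the strand diagram these four cells occupy the four quadrants around the point $\chi_0$, appearing in the cyclic order $v,f,v',f'$ (opposite quadrants get the same color). Writing $L_\chi$ for the left region of a crossing $\chi$, the construction of Figure~\ref{dyck-tiling-B} gives $B_d=\prod_\chi h_\chi^{[d\in L_\chi]}$ for each cell $d$, so that
\[\frac{B_v B_{v'}}{B_f B_{f'}}=\prod_\chi h_\chi^{N(\chi)},\qquad N(\chi)=[v\in L_\chi]+[v'\in L_\chi]-[f\in L_\chi]-[f'\in L_\chi].\]
It therefore suffices to prove $N(\chi)=0$ for every $\chi\neq\chi_0$ and that $N(\chi_0)=\pm1$ with the sign such that $h_{\chi_0}^{N(\chi_0)}=c(e)$.

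The key observation is that each cell, being disjoint from every strand, lies entirely inside one of the four regions cut out by the two strands of any crossing $\chi$, and moreover that $L_\chi\cap\{v,f,v',f'\}$ is a \emph{contiguous arc} of the cyclic sequence $(v,f,v',f')$. Here is where minimality enters: since two strands of a minimal diagram cross at most once, for $\chi\neq\chi_0$ the two strands of $\chi$ cannot coincide with $s_1,s_2$, and since only $s_1,s_2$ pass through the point $\chi_0$, at most one strand of $\chi$ passes through $\chi_0$. Thus either $\chi_0$ lies in the interior of a single region of $\chi$ — in which case all four quadrant cells lie in that one region and $L_\chi$ meets them in an arc of length $0$ or $4$ — or exactly one strand of $\chi$ runs through $\chi_0$, splitting the four quadrants into two cyclically adjacent pairs, each contained in one region of $\chi$, so $L_\chi$ meets the four cells in an arc of length $0$ or $2$. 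In all cases the arc has even length, and an even-length arc of the alternating sign pattern $(+,-,+,-)$ has zero signed sum, giving $N(\chi)=0$.

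For $\chi=\chi_0$ both strands pass through $\chi_0$, so $L_{\chi_0}$ meets $\{v,f,v',f'\}$ in exactly one cell, namely the cell occupying the "left quadrant." By the definition of horizontal conductance, $h_{\chi_0}=c(e)$ precisely when the endpoints of $e$ sit in the left and right regions of $\chi_0$ (so the left-quadrant cell is $v$ or $v'$, contributing $+1$ and giving $h_{\chi_0}^{N(\chi_0)}=c(e)$), whereas $h_{\chi_0}=1/c(e)$ when the endpoints of $e$ sit in the top and bottom regions (so the left-quadrant cell is $f$ or $f'$, contributing $-1$ and again giving $h_{\chi_0}^{N(\chi_0)}=c(e)$). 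Hence the product telescopes to $c(e)$, which is \eqref{BBBB}. The only substantive step is the contiguous-arc claim, and the place to be careful is the small case analysis of how the strands of $\chi$ meet a neighborhood of $\chi_0$; the rest is bookkeeping, and one should also note that in a reduced network the four quadrant cells around a crossing are genuinely distinct, so no degenerate coincidences arise.
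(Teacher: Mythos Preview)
Your argument is correct and is essentially the same as the paper's: both compute the exponent of each horizontal conductance $h_\chi$ in the biratio and observe that the four cells around $\chi_0$ meet the left region of $\chi$ in a contiguous arc of length $0$, $2$, or $4$ when $\chi\neq\chi_0$, so the contribution cancels, while the sole surviving factor at $\chi=\chi_0$ gives $c(e)$. Your write-up is in fact more complete than the paper's, since you make explicit the role of minimality (strands cross at most once, so at most one strand of $\chi$ passes through $\chi_0$) and you verify the sign at $\chi_0$ by matching the definition of horizontal conductance to whether the left quadrant is a vertex-cell or a face-cell; the paper leaves both of these implicit.
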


\begin{proof}
  Consider the contribution of a particular crossing $e$ to the
  biratio of $B$ variables at any other crossing $e'$.  If $e'$ is in
  the interior of the shadow of $e$, then all four $B$ variables have
  a factor of $c_e$.  If $e'$ is in the interior of the complement of
  the shadow, none of the four $B$ variables have a factor of $c_e$.
  If $e'$ is on the boundary of the shadow but not at $e$, then two
  adjacent $B$ variables have the weight $c_{e}$ and this gets divided
  out in the biratio.  The only net contribution comes when $e'=e$.
\end{proof}

One advantageous property of the $B$ variables is that they transform in a simple manner under
Y-$\Delta$ moves: via the cube recurrence \cite{CS,GK}, see Figure~\ref{BYD}.

\section{Positivity and Laurent phenomenon}

The top-dimensional cell of $\Omega_n$ consists of networks which are called \textbf{well-connected}. There are a number of
equivalent definitions of well-connected networks, due to \cite{CdV}.
A network~$\G$ is \textit{well-connected\/} if for any pair of noninterlaced subsets $A,B\subset \No$ of the same
cardinality $k$, there is a pairwise vertex-disjoint set of $k$ paths in $\G$ connecting
in pairs the nodes in $A$ to those in~$B$.  A minimal well-connected network has exactly
$n(n-1)/2$ edges; it has strand matching $\pi = \{\{1,n+1\},\{2,n+2\},\dots,\{n,2n\}\}$. A
network is well-connected if and only if all noninterlaced minors of $L$ are strictly positive.

For well-connected networks, we show here that one can test positivity
using $\binom{n}{2}$ minors of $L$, rather than Pfaffians. We
conjecture that an analogous statement holds for general minimal
networks:

\begin{conjecture}
  For a minimal network on $m$ edges, one can test positivity using $m$ noninterlaced minors of $L$.
\end{conjecture}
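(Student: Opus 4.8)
The plan is to reduce the conjecture to the tripod-variable description of a cell (Theorem~\ref{biratio-tripod}) and then to realize enough of the tripod variables as noninterlaced minors of~$L$. Fix a minimal network~$\G$ with $m$ edges, and let $Z_{\tau_\chi}$, one per crossing~$\chi$, together with $Z_{\tau_-}$, be the $m+1$ tripod variables of its standard representative. On the open cell every conductance is the biratio~\eqref{c=ZZZZ} of these, and every $Z_\tau$ is a product of conductances; hence the given matrix lies in the interior of its cell if and only if all $m+1$ (normalized) tripod variables are positive. By the monomial relation noted after Theorem~\ref{biratio-tripod}, the tripod variable of a lowermost crossing is a Laurent monomial in the other~$m$; since a Laurent monomial in positive reals is positive, positivity of those $m$ tripod variables already forces positivity of the last, hence of all conductances. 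So the conjecture reduces to showing that $m$ of the tripod variables can each be taken, up to sign and up to a cell-independent positive factor, to be a noninterlaced minor of~$L$.

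First I would dispose of the ``two-colour'' crossings. By the discussion following Lemma~\ref{tau-unique}, the ratios of tripod variables appearing in~\eqref{c=ZZZZ} are computed by the Pfaffian formulas of Section~\ref{sec:tripod-pf} in the extended network $\tilde\G_\chi$, the new-edge weights cancelling. When the comb at~$\chi$ meets only two of the three colour classes (a degenerate tripod: parallel doubletons, with possibly an outermost singleton), the relevant Pfaffian matrix has the block-anti-diagonal form $\bigl[\begin{smallmatrix}0&L_A^B\\ \ast&0\end{smallmatrix}\bigr]$ with $A$ and $B$ lying in complementary arcs of $\No$, so $Z_{\tau_\chi}/Z_{\mathrm{unc}}=\pm\det L_A^B$ is, up to sign, a noninterlaced minor. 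For a minimal well-connected network the standard network has only single-box tiles, every comb is two-coloured, and this is exactly the $\binom{n}{2}$-central-minors test; more generally the conjecture already follows for any minimal network whose standard network has only two-coloured combs.

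The genuine obstacle is a comb~$\tilde\tau_\chi$ that uses all three colours: then $Z_{\tau_\chi}$ is a three-block Pfaffian and is not literally a minor of~$L$. One must instead produce a noninterlaced minor $L_A^B$ whose sign tracks that of $Z_{\tau_\chi}$ on the closed cell. I would try, in order: (i) a Pfaffian analogue of the Desnanot--Jacobi / Pl\"ucker relations expressing a three-block Pfaffian as $\pm$ a ratio of products of two-block Pfaffians, i.e.\ of minors, so that its positivity becomes equivalent to that of minors already on the list; (ii) exploiting that each conductance is an invariant of the whole Y-$\Delta$ equivalence class, so that a suitable \emph{non}-standard representative (reached by Y-$\Delta$ moves while keeping the strands monotone, as in the $B$-variable construction) presents the same biratio through a two-coloured tripod; (iii) a direct Lindstr\"om--Gessel--Viennot / flow argument, in the spirit of the $\IG$ and $\Pfd$ constructions, identifying the unique-grove count $Z_{\tau_\chi}$ on the closed cell with the sign of a disjoint-path count and hence with a noninterlaced minor. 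Option~(ii) looks the cleanest, but carrying it out uniformly over all strand matchings while controlling signs and checking that the resulting row and column sets really do land in complementary arcs is the delicate part.

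Finally one must verify that the $m$ chosen minors cut out \emph{exactly} the open cell and not some larger semialgebraic set --- that no spurious sign coincidence among them can certify positivity off the cell. This is a dimension/irreducibility statement ($m$ constraints on an $m$-dimensional cell) that I expect to follow once the image of the reconstruction map is pinned down, paralleling the well-connected argument; but it is precisely this step, together with the three-colour reduction above, that keeps the statement a conjecture and where the partial progress of~\cite{ALT} stops short.
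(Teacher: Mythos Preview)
The statement you are attacking is stated in the paper as a \emph{conjecture}; the paper gives no proof, and your own last paragraph concedes as much. So there is nothing of the paper's to compare against, and the only question is whether your outline actually closes the gap. It does not, and the bridge you build to the one settled case is incorrect.

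Your first reduction is fine: by Theorem~\ref{biratio-tripod} the conductances are biratios of the $m+1$ tripod variables, each tripod variable is a monomial in the conductances, and the monomial relation recorded after that theorem lets one drop to $m$ of them. So positivity of a suitable $m$ of the (normalized) tripod quantities is indeed equivalent to positivity of all conductances.

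The trouble is the second step, realizing those quantities as noninterlaced minors of~$L$. Two concrete things go wrong. First, the tripod and dual-tripod Pfaffians of Section~\ref{sec:tripod-pf} are built from the response matrix~$\tilde L$ of the \emph{extended} network~$\tilde\G_\chi$, not from~$L$; the passage $L\to\tilde L$ involves gluing nodes and Schur reduction (see the reconstruction example, where nodes $3,4$ or $2,3$ are glued before any Pfaffian is taken), so even a two-colour Pfaffian that collapses to a determinant is a minor of~$\tilde L$, not of~$L$. Your ``up to a cell-independent positive factor'' does not absorb a Schur complement. Second, and more importantly, your assertion that in the well-connected case ``every comb is two-coloured, and this is exactly the $\binom{n}{2}$-central-minors test'' is wrong. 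The paper explicitly separates the two tests: the sentence immediately \emph{before} the conjecture says that for well-connected networks one can test positivity ``using $\binom{n}{2}$ minors of $L$, rather than Pfaffians'', and the abstract likewise contrasts the $m+1$-Pfaffian test with the minor test. The well-connected case (Corollary~\ref{centralminorcor}) is proved by an entirely different mechanism --- Desnanot--Jacobi and Kuo condensation on Aztec diamonds (Theorem~\ref{domino}) followed by the ``jaw move'' (Theorem~\ref{centralminorthm}) --- and makes no contact with tripod variables at all. So the one case that is known is not known by your route, and your claim to recover it is the genuine gap in the proposal. That the general statement remains open is exactly why the paper records it as a conjecture and points to~\cite{ALT} for partial progress.
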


\subsection{Contiguous and central minors}

We define a \textbf{contiguous minor} of an $n\times n$ matrix $M$ to be a minor of the form
\[\CM_{a,b,y}(M)=\det M_{a,a+1,\dots,a+y-1}^{b+y-1,\dots,b+1,b}\]
where the indices are interpreted modulo $n$.
In other words, the row and column indices are both contiguous modulo~$n$.
We define the \textbf{central (contiguous) minor} $\CM_{x,y}(M)$ (we use one less subscript)
to be the contiguous minor $\CM_{a,b,y}(M)$ with
\begin{align*}
a &= \left\lfloor\frac{x-y}{2}\right\rfloor&
b &= \left\lfloor\frac{x-y+n-(n-1\bmod 2)}{2}\right\rfloor\,.
\end{align*}
In other words, the central minor $\CM_{x,y}$ is a $y\times y$ minor
of~$M$ whose row and column indices are (cyclically) contiguous, and
when the matrix indices are arranged on a circle, the chords
connecting the row and column indices are generally as central as
possible, modulo some details about rounding.  Increasing $x$ by~$2$
is equivalent to cyclically shifting the indices, so $x$ is naturally
interpreted modulo $2n$.  The parameter $y$ naturally ranges
from~$0$ to~$n$.
Central minors were implicitly defined within a proof in \cite[Chapt.~5.3]{curtis-morrow}.

\renewcommand{\minorat}[3]{\begin{scope}[shift={#3},scale=0.30]
    \foreach \x in {1,...,#1} { \coordinate (\x) at ({1.*cos(\x*360/#1)},{1.*sin(\x*360/#1)});}
    \draw[fill=yellow,draw=none] (0,0) circle(1.0);
    \draw [thick] \foreach \x/\y in {#2} {(\x)--(\y)};
    \foreach \x in {1,...,#1} {\node [circle,fill=orange!60!yellow,inner sep=.5pt] at (\x) {\scalebox{1}{$\scriptstyle\x$}};}
    \foreach \x/\y in {#2} {
      \node [circle,fill=green!50!yellow,inner sep=.5pt,draw] at (\x) {\scalebox{1}{$\scriptstyle\x$}};
      \node [circle,fill=red!0!white,inner sep=.5pt,draw] at (\y) {\scalebox{1}{$\scriptstyle\y$}};
     };
\end{scope}}

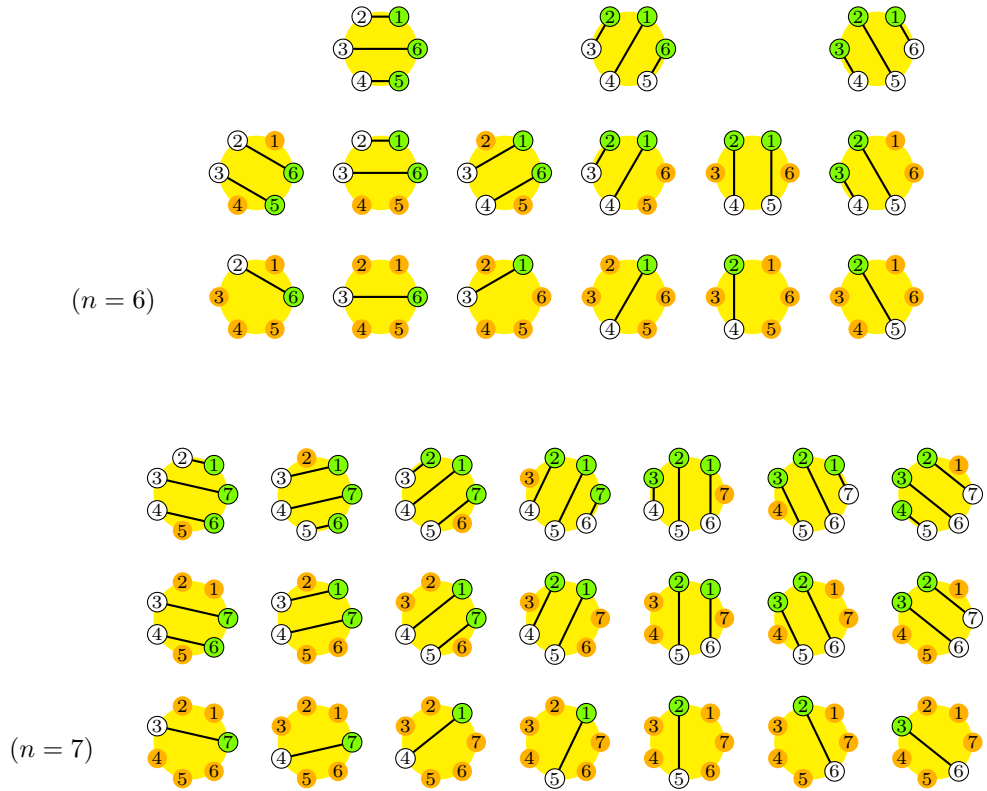
\begin{figure}[b!]
\begin{center}
\raisebox{12pt}{($n=6$)}\quad\quad
\begin{tikzpicture}
\begin{scope}[scale=1.65,baseline=1cm]
\minorat{6}{6/2}{(1,1)}
\minorat{6}{5/3,6/2}{(1,2)}
\minorat{6}{6/3}{(2,1)}
\minorat{6}{6/3,1/2}{(2,2)}
\minorat{6}{5/4,6/3,1/2}{(2,3)}
\minorat{6}{1/3}{(3,1)}
\minorat{6}{6/4,1/3}{(3,2)}
\minorat{6}{1/4}{(4,1)}
\minorat{6}{1/4,2/3}{(4,2)}
\minorat{6}{6/5,1/4,2/3}{(4,3)}
\minorat{6}{2/4}{(5,1)}
\minorat{6}{1/5,2/4}{(5,2)}
\minorat{6}{2/5}{(6,1)}
\minorat{6}{2/5,3/4}{(6,2)}
\minorat{6}{1/6,2/5,3/4}{(6,3)}
\end{scope}
\end{tikzpicture}
\\[40pt]
\raisebox{12pt}{($n=7$)}\quad\quad
\begin{tikzpicture}
\begin{scope}[scale=1.65,baseline=1cm]
\minorat{7}{7/3}{(1,1)}
\minorat{7}{6/4,7/3}{(1,2)}
\minorat{7}{6/4,7/3,1/2}{(1,3)}
\minorat{7}{7/4}{(2,1)}
\minorat{7}{7/4,1/3}{(2,2)}
\minorat{7}{6/5,7/4,1/3}{(2,3)}
\minorat{7}{1/4}{(3,1)}
\minorat{7}{7/5,1/4}{(3,2)}
\minorat{7}{7/5,1/4,2/3}{(3,3)}
\minorat{7}{1/5}{(4,1)}
\minorat{7}{1/5,2/4}{(4,2)}
\minorat{7}{7/6,1/5,2/4}{(4,3)}
\minorat{7}{2/5}{(5,1)}
\minorat{7}{1/6,2/5}{(5,2)}
\minorat{7}{1/6,2/5,3/4}{(5,3)}
\minorat{7}{2/6}{(6,1)}
\minorat{7}{2/6,3/5}{(6,2)}
\minorat{7}{1/7,2/6,3/5}{(6,3)}
\minorat{7}{3/6}{(7,1)}
\minorat{7}{2/7,3/6}{(7,2)}
\minorat{7}{2/7,3/6,4/5}{(7,3)}
\end{scope}
\end{tikzpicture}
\end{center}
\caption{\label{central-minors}
The $15$ small central minors of a (symmetric) $6\times 6$ matrix,
and the $21$ small central minors of a (symmetric) $7\times7$ matrix.
The row indices of the minor are shown in green, the column indices are shown in white.
The columns are indexed left-to-right $x=1,\dots,n$.
For non-symmetric matrices there are twice as many small central minors, and the range of $x$ is $1,\dots,2n$.
}
\end{figure}

\renewcommand{\minorat}[3]{\begin{scope}[shift={#3},scale=0.35]
    \foreach \x in {1,...,#1} { \coordinate (\x) at ({1.*cos(\x*360/#1)},{1.*sin(\x*360/#1)});}
    \draw[fill=yellow,draw=none] (0,0) circle(1.0);
    \draw [thick] \foreach \x/\y in {#2} {(\x)--(\y)};
    \foreach \x in {1,...,#1} {\node [circle,fill=orange!60!yellow,inner sep=.2pt] at (\x) {\scalebox{0.7}{$\scriptstyle\x$}};}
    \foreach \x/\y in {#2} {
      \node [circle,fill=green!50!yellow,inner sep=.2pt,draw] at (\x) {\scalebox{0.7}{$\scriptstyle\x$}};
      \node [circle,fill=red!0!white,inner sep=.2pt,draw] at (\y) {\scalebox{0.7}{$\scriptstyle\y$}};
     };
\end{scope}}

If the matrix $M$ is symmetric, then $\CM_{x+n,y}=\CM_{x,y}$ when $n$
is odd, and also when $n$ is even and $x+y$ is odd.  If $n$ is even
and $x+y$ is even, then $\CM_{x,y}$ is off-center by one and
$\CM_{x+n,y}$ is off-center by one in the other direction, so they are
different, but for some purposes they will turn out to be essentially
interchangeable.

The row and column indices of $\CM_{x,y}(M)$ are disjoint when either $y<n/2$ or $y=n/2$ and $x+y$ is odd.

We define the \textbf{small central minors} of an $n\times n$ matrix $M$ to be
\begin{equation}
 \Big\{\CM_{x,y}(M)\Big\}_{1\leq x\leq 2n}^{1\leq y<n/2\text{ or }y=n/2\text{ and }x+y\text{ odd}}\,.
\end{equation}
For symmetric matrices,
we define the small central minors as above
but with $1\leq x\leq n$.
See Figure~\ref{central-minors} for illustrations
of the small central minors when $n=6$ and $n=7$.

There are $\binom{n}{2}$ small central minors of a symmetric $n\times n$ matrix, whether $n$ is even or
odd.  The number of edges in a minimal well-connected network is also $\binom{n}{2}$.
We shall see that the small central minors of the response matrix are
all positive precisely when the network is well connected (Corollary~\ref{centralminorcor} below),
and that they can be used to reconstruct the response matrix
(Corollary~\ref{minor-Laurent}),
from which the network can be reconstructed as described in Section~\ref{sec:standard}.

We will show that a general contiguous minor $\CM_{a,b,y}(M)$ can be
expressed as a Laurent polynomial in the central minors, where the
terms in this Laurent polynomial are in bijective correspondence with
domino tilings of a certain region.

We let $\AD_{x_0,y_0,\ell}$ denote the ``Aztec diamond
region of order $\ell$'' centered at $(x_0,y_0)\in\Z^2$, which
consists of those unit squares of the square lattice centered at points
$(x,y)$ for which $x,y\in\Z+\frac12$ and $|x-x_0|+|y-y_0|\leq\ell$.
There are some examples of Aztec diamonds illustrated in the next few
pages.
There are $2^{\ell(\ell+1)/2}$ ways
to tile an order-$\ell$ Aztec diamond by $2\times 1$ dominos
\cite{EKLP1}.

We give weights to the dominos:
If $(x,y)\in\Z\times(\Z+\frac12)$, then $w[x,y]$ is the weight of the horizontal domino containing the squares at $(x+\frac12,y)$ and $(x-\frac12,y)$, and if $(x,y)\in(\Z+\frac12)\times\Z$, then
$w[x,y]$ is the weight of the vertical domino containing squares $(x,y+\frac12)$ and $(x,y-\frac12)$.
The weight of a domino tiling is the product
of the weights of its dominos.
Given a region $R$ tileable by dominos,
we define its weight $W(R)$ to be the sum of the weights of its domino tilings.
This weighted sum of domino tilings may be computed using Kasteleyn matrices,
or by other methods in the case of Aztec diamonds.

To define the Laurent polynomial of a region,
we set the domino weights to be expressions defined
in terms of variables $\{v_{x,y}\}_{x,y\in\Z}$.
For horizontal dominos, $(x,y)\in\Z\times(\Z+\frac12)$,
we set 
\begin{equation}\label{h-domino-wt}
w[x,y]=1/(v_{x,y-1/2} v_{x,y+1/2})\,,\end{equation}
and for vertical dominos, $(x,y)\in(\Z+\frac12)\times\Z$, we set
\begin{equation}\label{v-domino-wt}
w[x,y]=1/(v_{x-1/2,y} v_{x+1/2,y})\,.
\end{equation}
We define the Laurent polynomial $P(R)$ of a region $R$ to be $W(R)$,
with these domino weights, times a monomial factor
 which is
(usually) the product of the variables $v_{x,y}$
on which the domino weights in $R$ depend.

For the Aztec diamond $\AD_{x_0,y_0,\ell}$, the monomial
factor is \[\prod_{\substack{x,y\in\Z\\|x-x_0|+|y-y_0|\leq\ell}}
v_{x,y}\,.\]
For the Aztec diamond $\AD_{x_0,y_0,0}$, the monomial factor
is $v_{x_0,y_0}$ even though the weight $W(\AD_{x_0,y_0,0})=1$ does not
actually depend on $v_{x_0,y_0}$.  More precisely, the monomial factor is
a product of variables that include the variables on which $W(R)$ depends,
and which will be clear from context.  For each monomial in the Laurent
polynomial $P(R)$ of the region, the degree of any variable
$v_{x,y}$ is one of $0,-1,+1$.

Let the \textit{truncated Aztec diamond\/} $\TAD_{x_0,y_0,\ell,n}$ be Aztec diamond $\AD_{x_0,y_0,\ell}$
truncated to contain only squares centered at points with $y$-coordinate between $0$ and $n$,
and depend only on variables $v_{x,y}$ for which $|x-x_0|+|y-y_0|\leq\ell$ and $0\leq y\leq n$.

\begin{theorem}\label{domino}
Suppose $0\leq y\leq n$ and $\ell\geq 0$.
Consider the Laurent polynomial $P(\TAD_{x,y,\ell,n})$ evaluated
with each variable $v_{x',y'}$ set to the central minor 
$v_{x',y'}=\CM_{x',y'}(M)$ of an $n\times n$ matrix $M$.
The rightmost central minor of the truncated Aztec diamond
has row-set (or column-set respectively) which is different
from the corresponding set of indices of the central minor to its left,
and the leftmost central minor has column-set (or row-set respectively)
which is different from the one to its right.
This evaluation of $P(\TAD_{x,y,\ell,n})$ gives the contiguous minor of $M$
with row-set given by the row-set of the rightmost (respectively leftmost) central minor, and
column-set given by the column-set of the leftmost (respectively rightmost) central minor.
\end{theorem}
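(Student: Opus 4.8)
The plan is to recognize this as an instance of the Laurent phenomenon for the octahedron (Desnanot--Jacobi) recurrence, combined with the classical correspondence between that recurrence and domino tilings of Aztec diamonds. First I would put the minor side into octahedron form. For $a,b\in\Z/n\Z$ and $0\le y\le n$ set $C(a,b,y)=(-1)^{\binom{y}{2}}\det M[\,\{a,\dots,a+y-1\}\,;\,\{b,\dots,b+y-1\}\,]$, with $C(a,b,0)=1$; the sign matches the decreasing-column convention in the definition of $\CM$, so $\CM_{x,y}(M)=C(\lfloor(x-y)/2\rfloor,\lfloor(x-y+c)/2\rfloor,y)$ with $c=n-((n-1)\bmod 2)$. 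Applying Desnanot--Jacobi to the $(y{+}1)\times(y{+}1)$ window with row set $\{a{-}1,\dots,a{+}y{-}1\}$ and column set $\{b,\dots,b{+}y\}$, and using that $\binom{y+1}{2}+\binom{y-1}{2}$ is odd so that the usual minus sign becomes a plus, one obtains
\[C(a,b,y)\,C(a{-}1,b{+}1,y)=C(a{-}1,b,y)\,C(a,b{+}1,y)+C(a{-}1,b,y{+}1)\,C(a,b{+}1,y{-}1),\]
valid for all $a,b,y$. In coordinates $s=a+b+y$, $q=a-b$ this is the standard octahedron relation across an octahedron centered at $(s,q{-}1,y)$, and one checks that the central minors $\{\CM_{x,y}\}$ fill precisely a ``brick-wall'' pair of adjacent $q$-slices (for each $(s,y)$ exactly one of the two relevant values of $C$ is a central minor, the choice alternating with the parity of $s-y$) --- exactly the initial data from which Aztec diamonds arise.

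The core of the argument is then the classical fact --- Speyer's Laurent-phenomenon theorem for the octahedron recurrence, specialized to the flat/brick-wall initial condition, which ultimately rests on domino shuffling and graphical condensation for Aztec diamonds --- that iterating the octahedron recurrence from brick-wall initial data produces, at the grid point associated to $(x,y,\ell)$, a Laurent polynomial in the initial data whose monomials biject with the domino tilings of an order-$\ell$ Aztec diamond, with weights exactly \eqref{h-domino-wt}--\eqref{v-domino-wt} and the stated monomial prefactor; that is, this Laurent polynomial is $P(\AD_{x,y,\ell})$. Granting this, $P(\AD_{x,y,\ell})$ evaluated at $v_{x',y'}=\CM_{x',y'}(M)$ is the value produced by $\ell$ steps of Dodgson condensation applied to the central-minor initial data; since every step is an instance of Desnanot--Jacobi, the value is again a single contiguous minor, and its row set and column set are those carried by the $C$'s at the two extreme ends of the diamond --- i.e.\ by the leftmost and rightmost central minors occurring in $P(\AD_{x,y,\ell})$ --- one contributing the rows and the other the columns. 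The two ``respectively'' alternatives correspond to the two parities of $x+\ell-y$, which interchange which extreme central minor supplies rows versus columns. I would verify the base cases $\ell=0$ (where $P=v_{x,y}=\CM_{x,y}$) and $\ell=1$ by hand: $P(\AD_{x,y,1})=(v_{x-1,y}v_{x+1,y}+v_{x,y-1}v_{x,y+1})/v_{x,y}$, and the displayed octahedron recurrence for one choice of $(a,b)$ shows this evaluates to $C(a{-}1,b{+}1,y)$, the contiguous minor whose rows are the rows of $\CM_{x-1,y}$ and whose columns are the columns of $\CM_{x+1,y}$.

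Finally I would handle the truncation. Since $M$ is $n\times n$ one has $\CM_{x',0}=1$ and $\CM_{x',n}=(-1)^{\binom n2}\det M$ for every $x'$, and the octahedron recurrence at the boundary layers $y=0$ and $y=n$ forces the would-be values at $y=-1$ and $y=n+1$ to vanish; substituting these degenerate values into $P(\AD_{x,y,\ell})$ annihilates exactly the monomials coming from tilings that use squares with $y$-coordinate outside $[0,n]$, which is the definition of $P(\TAD_{x,y,\ell,n})$. Equivalently, running the iteration only on the slices $0\le y\le n$, i.e.\ using the truncated Aztec diamond, yields the same contiguous minor. Combining the three steps gives the claimed evaluation of $P(\TAD_{x,y,\ell,n})$.

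The step I expect to be the main obstacle is making the octahedron--Aztec correspondence precise with exactly the weight normalization \eqref{h-domino-wt}--\eqref{v-domino-wt} and the monomial prefactor used here --- so that the regions arising really are (truncated) Aztec diamonds rather than the more general ``pinecone'' regions of the general theorem --- together with the parity and index bookkeeping that pins down which extreme central minor supplies the rows and which the columns. If one prefers a fully self-contained argument, one can instead induct on $\ell$, but that requires first isolating the precise graphical-condensation identity satisfied by $P(\TAD_{\cdot,\cdot,\ell,n})$ as $\ell$ grows and checking, layer by layer, that it coincides with the Desnanot--Jacobi recurrence for the contiguous minors.
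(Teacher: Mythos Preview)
Your approach is sound and essentially correct: you correctly identify that the contiguous minors satisfy the octahedron recurrence via Desnanot--Jacobi, and that the Laurent polynomials $P(\AD_{x,y,\ell})$ satisfy the same recurrence by the Aztec--octahedron correspondence. The paper, however, does not appeal to Speyer's theorem or to any general Laurent-phenomenon machinery. Instead it takes exactly the ``self-contained'' route you mention in your final paragraph: it proves the recurrence
\[
P(\TAD_{x,y,\ell,n})\,P(\TAD_{x,y,\ell-2,n})=P(\TAD_{x,y+1,\ell-1,n})\,P(\TAD_{x,y-1,\ell-1,n})+P(\TAD_{x-1,y,\ell-1,n})\,P(\TAD_{x+1,y,\ell-1,n})
\]
directly from Kuo's graphical-condensation identity for weighted Aztec diamonds, observes that this matches the Desnanot--Jacobi recurrence for the contiguous minors, verifies the base cases $\ell=0,1$ by hand, and inducts on $\ell$. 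So what you flag as the alternative is in fact the paper's entire argument; your primary route trades that short induction for an invocation of a heavier external theorem together with the index bookkeeping you yourself identify as the main obstacle.

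One genuine gap in your write-up is the truncation step. You argue that since $\CM_{x',0}\equiv 1$ and $\CM_{x',n}\equiv\pm\det M$, the octahedron recurrence forces the $y=-1$ and $y=n+1$ layers to vanish, and then claim that substituting $v_{x',-1}=0$ into $P(\AD_{x,y,\ell})$ kills exactly the tilings that leave the strip. But the variables $v_{x',-1}$ can occur with exponent $-1$ as well as $+1$ in the Laurent monomials, so naively setting them to zero risks creating poles rather than zeros; you would need to argue that the degree of each boundary variable is never $-1$ in any monomial, which is additional combinatorics. The paper avoids this entirely: it handles truncation at the level of Kuo's identity by setting the \emph{domino weights} outside the strip (vertical to $0$, horizontal to $1$) rather than the \emph{vertex variables}, which immediately and cleanly yields the truncated recurrence without any limit or degree analysis.
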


The following example illustrates most of the key ideas in Theorem~\ref{domino}.
Here the Aztec diamond has order-$1$, and the rightmost central minor gives the row-set and the leftmost central minor gives the column set in the contiguous minor.  There are $2$ domino tilings of the region, so the contiguous minor is a $2$-term Laurent polynomial in the central minors.

\begin{align*}
&\det\begin{bmatrix} M_{1,3}\end{bmatrix} =
\minors{7}{1/3} =
\begin{tikzpicture}[baseline=1cm-2.5pt]
\filldraw[blue!15!white,draw=black] (0,0) rectangle (1,1);
\filldraw[blue!15!white,draw=black] (-1,0) rectangle (0,1);
\filldraw[blue!15!white,draw=black] (-1,1) rectangle (0,2);
\filldraw[blue!15!white,draw=black] (0,1) rectangle (1,2);
\minorat{7}{7/4}{(0,1)}
\minorat{7}{7/3}{(-1,1)}
\minorat{7}{1/4}{(1,1)}
\minorat{7}{1/3,7/4}{(0,2)}
\minorat{7}{}{(0,0)}
\end{tikzpicture}
=
\begin{tikzpicture}[baseline=1cm-2.5pt]
\draw[thick] (0,2) -- (0,2.2);
\draw[thick] (0,0) -- (0,-.2);
\draw[thick] (1,1) -- (1.2,1);
\draw[thick] (-1,1) -- (-1.2,1);
\draw[thick] (1,2) -- (1.1414,2.1414);
\draw[thick] (1,0) -- (1.1414,-0.1414);
\draw[thick] (-1,2) -- (-1.1414,2.1414);
\draw[thick] (-1,0) -- (-1.1414,-0.1414);
\filldraw[blue!15!white,draw=black,thick] (-1,0) rectangle (1,1);
\filldraw[blue!15!white,draw=black,thick] (-1,1) rectangle (1,2);
\end{tikzpicture}
+
\begin{tikzpicture}[baseline=1cm-2pt]
\draw[thick] (0,2) -- (0,2.2);
\draw[thick] (0,0) -- (0,-.2);
\draw[thick] (1,1) -- (1.2,1);
\draw[thick] (-1,1) -- (-1.2,1);
\draw[thick] (1,2) -- (1.1414,2.1414);
\draw[thick] (1,0) -- (1.1414,-0.1414);
\draw[thick] (-1,2) -- (-1.1414,2.1414);
\draw[thick] (-1,0) -- (-1.1414,-0.1414);
\filldraw[blue!15!white,draw=black,thick] (-1,0) rectangle (0,2);
\filldraw[blue!15!white,draw=black,thick] (0,0) rectangle (1,2);
\end{tikzpicture}
\\
&=
\frac{\minors{7}{7/3}\minors{7}{1/4}}{\minors{7}{7/4}}
+
\frac{\minors{7}{1/3,7/4}\minors{7}{}}{\minors{7}{7/4}}
=
\frac{\det\!\begin{bmatrix} M_{7,3}\end{bmatrix}\det\!\begin{bmatrix} M_{1,4}\end{bmatrix}}{\det\!\begin{bmatrix} M_{7,4}\end{bmatrix}}
+
\frac{\det\!\begin{bmatrix} M_{7,4} & M_{7,3} \\ M_{1,4} & M_{1,3} \end{bmatrix}\overbrace{\det[]}^{=1}}{\det\!\begin{bmatrix} M_{7,4}\end{bmatrix}}
\end{align*}

In the following example the (order-$2$) Aztec diamond is truncated,
the rightmost central minor contributes the column-set
and the leftmost central minor contributes the row-set of the contiguous minor.
Here there are $6$ domino tilings, so the Laurent polynomial has $6$ terms,
which we do not list explicitly.
\[
\minors{7}{6/5} =
\begin{tikzpicture}[baseline=1.5cm-2.5pt]
\filldraw[blue!15!white,draw=black] (9,0) rectangle (10,1);
\filldraw[blue!15!white,draw=black] (9,1) rectangle (10,2);
\filldraw[blue!15!white,draw=black] (10,0) rectangle (11,1);
\filldraw[blue!15!white,draw=black] (10,1) rectangle (11,2);
\filldraw[blue!15!white,draw=black] (10,2) rectangle (11,3);
\filldraw[blue!15!white,draw=black] (11,0) rectangle (12,1);
\filldraw[blue!15!white,draw=black] (11,1) rectangle (12,2);
\filldraw[blue!15!white,draw=black] (11,2) rectangle (12,3);
\filldraw[blue!15!white,draw=black] (12,0) rectangle (13,1);
\filldraw[blue!15!white,draw=black] (12,1) rectangle (13,2);
\minorat{7}{6/3}{(9,1)}
\minorat{7}{}{(10,0)}
\minorat{7}{7/3}{(10,1)}
\minorat{7}{7/3,6/4}{(10,2)}
\minorat{7}{}{(11,0)}
\minorat{7}{7/4}{(11,1)}
\minorat{7}{1/3,7/4}{(11,2)}
\minorat{7}{1/3,7/4,6/5}{(11,3)}
\minorat{7}{}{(12,0)}
\minorat{7}{1/4}{(12,1)}
\minorat{7}{1/4,7/5}{(12,2)}
\minorat{7}{1/5}{(13,1)}
\end{tikzpicture}
\]

An example which better shows the Aztec diamond structure of the region is
\[
\minorsize{13}{3/4,2/5,1/6}{0.65} =
\begin{tikzpicture}[baseline=3cm-2.5pt]
\filldraw[blue!15!white,draw=black] (17,2) rectangle (18,3);
\filldraw[blue!15!white,draw=black] (17,3) rectangle (18,4);
\filldraw[blue!15!white,draw=black] (18,1) rectangle (19,2);
\filldraw[blue!15!white,draw=black] (18,2) rectangle (19,3);
\filldraw[blue!15!white,draw=black] (18,3) rectangle (19,4);
\filldraw[blue!15!white,draw=black] (18,4) rectangle (19,5);
\filldraw[blue!15!white,draw=black] (19,0) rectangle (20,1);
\filldraw[blue!15!white,draw=black] (19,1) rectangle (20,2);
\filldraw[blue!15!white,draw=black] (19,2) rectangle (20,3);
\filldraw[blue!15!white,draw=black] (19,3) rectangle (20,4);
\filldraw[blue!15!white,draw=black] (19,4) rectangle (20,5);
\filldraw[blue!15!white,draw=black] (19,5) rectangle (20,6);
\filldraw[blue!15!white,draw=black] (20,0) rectangle (21,1);
\filldraw[blue!15!white,draw=black] (20,1) rectangle (21,2);
\filldraw[blue!15!white,draw=black] (20,2) rectangle (21,3);
\filldraw[blue!15!white,draw=black] (20,3) rectangle (21,4);
\filldraw[blue!15!white,draw=black] (20,4) rectangle (21,5);
\filldraw[blue!15!white,draw=black] (20,5) rectangle (21,6);
\filldraw[blue!15!white,draw=black] (21,1) rectangle (22,2);
\filldraw[blue!15!white,draw=black] (21,2) rectangle (22,3);
\filldraw[blue!15!white,draw=black] (21,3) rectangle (22,4);
\filldraw[blue!15!white,draw=black] (21,4) rectangle (22,5);
\filldraw[blue!15!white,draw=black] (22,2) rectangle (23,3);
\filldraw[blue!15!white,draw=black] (22,3) rectangle (23,4);
\minorat{13}{13/4,12/5,11/6}{(17,3)}
\minorat{13}{13/5,12/6}{(18,2)}
\minorat{13}{13/5,12/6,11/7}{(18,3)}
\minorat{13}{1/4,13/5,12/6,11/7}{(18,4)}
\minorat{13}{13/6}{(19,1)}
\minorat{13}{13/6,12/7}{(19,2)}
\minorat{13}{1/5,13/6,12/7}{(19,3)}
\minorat{13}{1/5,13/6,12/7,11/8}{(19,4)}
\minorat{13}{2/4,1/5,13/6,12/7,11/8}{(19,5)}
\minorat{13}{}{(20,0)}
\minorat{13}{13/7}{(20,1)}
\minorat{13}{1/6,13/7}{(20,2)}
\minorat{13}{1/6,13/7,12/8}{(20,3)}
\minorat{13}{2/5,1/6,13/7,12/8}{(20,4)}
\minorat{13}{2/5,1/6,13/7,12/8,11/9}{(20,5)}
\minorat{13}{3/4,2/5,1/6,13/7,12/8,11/9}{(20,6)}
\minorat{13}{1/7}{(21,1)}
\minorat{13}{1/7,13/8}{(21,2)}
\minorat{13}{2/6,1/7,13/8}{(21,3)}
\minorat{13}{2/6,1/7,13/8,12/9}{(21,4)}
\minorat{13}{3/5,2/6,1/7,13/8,12/9}{(21,5)}
\minorat{13}{2/7,1/8}{(22,2)}
\minorat{13}{2/7,1/8,13/9}{(22,3)}
\minorat{13}{3/6,2/7,1/8,13/9}{(22,4)}
\minorat{13}{3/7,2/8,1/9}{(23,3)}
\end{tikzpicture}
\]

\begin{proof}[Proof of Theorem~\ref{domino}]
  We use the graphical condensation of Kuo \cite[Thm.~5.5]{Kuo} together
  with the Desnanot--Jacobi identity, which are both related to Dodgson
  condensation.

  For a matrix $M$, let $M_{\widehat{r_1,\dots,r_k}}^{\widehat{c_1,\dots,c_k}}$ denote the submatrix obtained
  by deleting rows $r_1,\dots,r_k$ and columns $c_1,\dots,c_k$.  The Desnanot--Jacobi identity
  is
\[ \det M_{\widehat{a}}^{\widehat{c}} \det M_{\widehat{b}}^{\widehat{d}} =
 \det M \det M_{\widehat{a,b}}^{\widehat{c,d}} +
 \det M_{\widehat{b}}^{\widehat{c}} \det M_{\widehat{a}}^{\widehat{d}}\,,\]
where row indices $a$ and $b$ are in sorted order, and column indices $c$ and $d$ are in sorted order.
For example,
\[
\minors{9}{8/1,7/2}  =
\frac{\minors{9}{8/1,7/2,6/3} \minors{9}{7/2}
  +
\minors{9}{7/1,6/2} \minors{9}{8/2,7/3}}{\minors{9}{7/2,6/3}}\,.
\]

Kuo's graphical condensation relates the weighted sum of domino tilings of
weighted Aztec diamonds.
Recall that $w$ denotes the domino weights.
Kuo's formula is
\begin{multline*}
  W(\AD_{x,y,\ell}) W(\AD_{x,y,\ell-2}) = \\
  w[x{-}\ell{+}\tfrac12,y]\,w[x{+}\ell{-}\tfrac12,y]\, \times
  W(\AD_{x,y+1,\ell-1})W(\AD_{x,y-1,\ell-1})\\
 +
  w[x,y{-}\ell{+}\tfrac12]\,w[x,y{+}\ell{-}\tfrac12]\, \times
  W(\AD_{x-1,y,\ell-1})W(\AD_{x+1,y,\ell-1})\,,
\end{multline*}
for any domino weights (not just the ones given by \eqref{h-domino-wt} and \eqref{v-domino-wt}).
Notice that this formula extends to truncated Aztec diamonds, simply by setting
the vertical domino weights to $0$ and horizontal domino weights to
$1$ for dominos whose $y$-coordinate is too low or too high.

When the weights are given by \eqref{h-domino-wt} and \eqref{v-domino-wt},
if we multiply both sides by the monomial factors of $\TAD_{x,y,\ell,n}$ and $\TAD_{x,y,\ell-2,n}$,
the individual domino weights drop out, and we obtain
\begin{multline*}
  P(\TAD_{x,y,\ell,n}) P(\TAD_{x,y,\ell-2,n}) =
  P(\TAD_{x,y+1,\ell-1,n}) P(\TAD_{x,y-1,\ell-1,n})\\
 +
  P(\TAD_{x-1,y,\ell-1,n})P(\TAD_{x+1,y,\ell-1,n})\,.
\end{multline*}
Here is an example (with the $v$'s set to the central minors):
\newcommand{\exsc}{0.395}
\[
\begin{tikzpicture}[scale=\exsc,baseline=0.98cm-2.5pt]
\filldraw[blue!15!white,draw=black] (11,1) rectangle (12,2);
\filldraw[blue!15!white,draw=black] (11,2) rectangle (12,3);
\filldraw[blue!15!white,draw=black] (12,0) rectangle (13,1);
\filldraw[blue!15!white,draw=black] (12,1) rectangle (13,2);
\filldraw[blue!15!white,draw=black] (12,2) rectangle (13,3);
\filldraw[blue!15!white,draw=black] (12,3) rectangle (13,4);
\filldraw[blue!15!white,draw=black] (13,0) rectangle (14,1);
\filldraw[blue!15!white,draw=black] (13,1) rectangle (14,2);
\filldraw[blue!15!white,draw=black] (13,2) rectangle (14,3);
\filldraw[blue!15!white,draw=black] (13,3) rectangle (14,4);
\filldraw[blue!15!white,draw=black] (13,4) rectangle (14,5);
\filldraw[blue!15!white,draw=black] (14,0) rectangle (15,1);
\filldraw[blue!15!white,draw=black] (14,1) rectangle (15,2);
\filldraw[blue!15!white,draw=black] (14,2) rectangle (15,3);
\filldraw[blue!15!white,draw=black] (14,3) rectangle (15,4);
\filldraw[blue!15!white,draw=black] (14,4) rectangle (15,5);
\filldraw[blue!15!white,draw=black] (15,0) rectangle (16,1);
\filldraw[blue!15!white,draw=black] (15,1) rectangle (16,2);
\filldraw[blue!15!white,draw=black] (15,2) rectangle (16,3);
\filldraw[blue!15!white,draw=black] (15,3) rectangle (16,4);
\filldraw[blue!15!white,draw=black] (16,1) rectangle (17,2);
\filldraw[blue!15!white,draw=black] (16,2) rectangle (17,3);
\minorats{11}{9/2,8/3}{(11,2)}
\minorats{11}{9/3}{(12,1)}
\minorats{11}{9/3,8/4}{(12,2)}
\minorats{11}{10/2,9/3,8/4}{(12,3)}
\minorats{11}{}{(13,0)}
\minorats{11}{9/4}{(13,1)}
\minorats{11}{10/3,9/4}{(13,2)}
\minorats{11}{10/3,9/4,8/5}{(13,3)}
\minorats{11}{11/2,10/3,9/4,8/5}{(13,4)}
\minorats{11}{}{(14,0)}
\minorats{11}{10/4}{(14,1)}
\minorats{11}{10/4,9/5}{(14,2)}
\minorats{11}{11/3,10/4,9/5}{(14,3)}
\minorats{11}{11/3,10/4,9/5,8/6}{(14,4)}
\minorats{11}{1/2,11/3,10/4,9/5,8/6}{(14,5)}
\minorats{11}{}{(15,0)}
\minorats{11}{10/5}{(15,1)}
\minorats{11}{11/4,10/5}{(15,2)}
\minorats{11}{11/4,10/5,9/6}{(15,3)}
\minorats{11}{1/3,11/4,10/5,9/6}{(15,4)}
\minorats{11}{11/5}{(16,1)}
\minorats{11}{11/5,10/6}{(16,2)}
\minorats{11}{1/4,11/5,10/6}{(16,3)}
\minorats{11}{1/5,11/6}{(17,2)}
\end{tikzpicture}
\times
\begin{tikzpicture}[scale=\exsc,baseline=0.98cm-2.5pt]
\filldraw[blue!15!white,draw=black] (13,1) rectangle (14,2);
\filldraw[blue!15!white,draw=black] (13,2) rectangle (14,3);
\filldraw[blue!15!white,draw=black] (14,1) rectangle (15,2);
\filldraw[blue!15!white,draw=black] (14,2) rectangle (15,3);
\minorats{11}{10/3,9/4}{(13,2)}
\minorats{11}{10/4}{(14,1)}
\minorats{11}{10/4,9/5}{(14,2)}
\minorats{11}{11/3,10/4,9/5}{(14,3)}
\minorats{11}{11/4,10/5}{(15,2)}
\end{tikzpicture}
=
\begin{tikzpicture}[scale=\exsc,baseline=0.98cm-2.5pt]
\filldraw[blue!15!white,draw=black] (12,2) rectangle (13,3);
\filldraw[blue!15!white,draw=black] (12,3) rectangle (13,4);
\filldraw[blue!15!white,draw=black] (13,1) rectangle (14,2);
\filldraw[blue!15!white,draw=black] (13,2) rectangle (14,3);
\filldraw[blue!15!white,draw=black] (13,3) rectangle (14,4);
\filldraw[blue!15!white,draw=black] (13,4) rectangle (14,5);
\filldraw[blue!15!white,draw=black] (14,1) rectangle (15,2);
\filldraw[blue!15!white,draw=black] (14,2) rectangle (15,3);
\filldraw[blue!15!white,draw=black] (14,3) rectangle (15,4);
\filldraw[blue!15!white,draw=black] (14,4) rectangle (15,5);
\filldraw[blue!15!white,draw=black] (15,2) rectangle (16,3);
\filldraw[blue!15!white,draw=black] (15,3) rectangle (16,4);
\minorats{11}{10/2,9/3,8/4}{(12,3)}
\minorats{11}{10/3,9/4}{(13,2)}
\minorats{11}{10/3,9/4,8/5}{(13,3)}
\minorats{11}{11/2,10/3,9/4,8/5}{(13,4)}
\minorats{11}{10/4}{(14,1)}
\minorats{11}{10/4,9/5}{(14,2)}
\minorats{11}{11/3,10/4,9/5}{(14,3)}
\minorats{11}{11/3,10/4,9/5,8/6}{(14,4)}
\minorats{11}{1/2,11/3,10/4,9/5,8/6}{(14,5)}
\minorats{11}{11/4,10/5}{(15,2)}
\minorats{11}{11/4,10/5,9/6}{(15,3)}
\minorats{11}{1/3,11/4,10/5,9/6}{(15,4)}
\minorats{11}{1/4,11/5,10/6}{(16,3)}
\end{tikzpicture}
\times
\begin{tikzpicture}[scale=\exsc,baseline=0.98cm-2.5pt]
\filldraw[blue!15!white,draw=black] (12,0) rectangle (13,1);
\filldraw[blue!15!white,draw=black] (12,1) rectangle (13,2);
\filldraw[blue!15!white,draw=black] (13,0) rectangle (14,1);
\filldraw[blue!15!white,draw=black] (13,1) rectangle (14,2);
\filldraw[blue!15!white,draw=black] (13,2) rectangle (14,3);
\filldraw[blue!15!white,draw=black] (14,0) rectangle (15,1);
\filldraw[blue!15!white,draw=black] (14,1) rectangle (15,2);
\filldraw[blue!15!white,draw=black] (14,2) rectangle (15,3);
\filldraw[blue!15!white,draw=black] (15,0) rectangle (16,1);
\filldraw[blue!15!white,draw=black] (15,1) rectangle (16,2);
\minorats{11}{9/3}{(12,1)}
\minorats{11}{}{(13,0)}
\minorats{11}{9/4}{(13,1)}
\minorats{11}{10/3,9/4}{(13,2)}
\minorats{11}{}{(14,0)}
\minorats{11}{10/4}{(14,1)}
\minorats{11}{10/4,9/5}{(14,2)}
\minorats{11}{11/3,10/4,9/5}{(14,3)}
\minorats{11}{}{(15,0)}
\minorats{11}{10/5}{(15,1)}
\minorats{11}{11/4,10/5}{(15,2)}
\minorats{11}{11/5}{(16,1)}
\end{tikzpicture}
+
\begin{tikzpicture}[scale=\exsc,baseline=0.98cm-2.5pt]
\filldraw[blue!15!white,draw=black] (11,1) rectangle (12,2);
\filldraw[blue!15!white,draw=black] (11,2) rectangle (12,3);
\filldraw[blue!15!white,draw=black] (12,0) rectangle (13,1);
\filldraw[blue!15!white,draw=black] (12,1) rectangle (13,2);
\filldraw[blue!15!white,draw=black] (12,2) rectangle (13,3);
\filldraw[blue!15!white,draw=black] (12,3) rectangle (13,4);
\filldraw[blue!15!white,draw=black] (13,0) rectangle (14,1);
\filldraw[blue!15!white,draw=black] (13,1) rectangle (14,2);
\filldraw[blue!15!white,draw=black] (13,2) rectangle (14,3);
\filldraw[blue!15!white,draw=black] (13,3) rectangle (14,4);
\filldraw[blue!15!white,draw=black] (14,1) rectangle (15,2);
\filldraw[blue!15!white,draw=black] (14,2) rectangle (15,3);
\minorats{11}{9/2,8/3}{(11,2)}
\minorats{11}{9/3}{(12,1)}
\minorats{11}{9/3,8/4}{(12,2)}
\minorats{11}{10/2,9/3,8/4}{(12,3)}
\minorats{11}{}{(13,0)}
\minorats{11}{9/4}{(13,1)}
\minorats{11}{10/3,9/4}{(13,2)}
\minorats{11}{10/3,9/4,8/5}{(13,3)}
\minorats{11}{11/2,10/3,9/4,8/5}{(13,4)}
\minorats{11}{10/4}{(14,1)}
\minorats{11}{10/4,9/5}{(14,2)}
\minorats{11}{11/3,10/4,9/5}{(14,3)}
\minorats{11}{11/4,10/5}{(15,2)}
\end{tikzpicture}
\times
\begin{tikzpicture}[scale=\exsc,baseline=0.98cm-2.5pt]
\filldraw[blue!15!white,draw=black] (13,1) rectangle (14,2);
\filldraw[blue!15!white,draw=black] (13,2) rectangle (14,3);
\filldraw[blue!15!white,draw=black] (14,0) rectangle (15,1);
\filldraw[blue!15!white,draw=black] (14,1) rectangle (15,2);
\filldraw[blue!15!white,draw=black] (14,2) rectangle (15,3);
\filldraw[blue!15!white,draw=black] (14,3) rectangle (15,4);
\filldraw[blue!15!white,draw=black] (15,0) rectangle (16,1);
\filldraw[blue!15!white,draw=black] (15,1) rectangle (16,2);
\filldraw[blue!15!white,draw=black] (15,2) rectangle (16,3);
\filldraw[blue!15!white,draw=black] (15,3) rectangle (16,4);
\filldraw[blue!15!white,draw=black] (16,1) rectangle (17,2);
\filldraw[blue!15!white,draw=black] (16,2) rectangle (17,3);
\minorats{11}{10/3,9/4}{(13,2)}
\minorats{11}{10/4}{(14,1)}
\minorats{11}{10/4,9/5}{(14,2)}
\minorats{11}{11/3,10/4,9/5}{(14,3)}
\minorats{11}{}{(15,0)}
\minorats{11}{10/5}{(15,1)}
\minorats{11}{11/4,10/5}{(15,2)}
\minorats{11}{11/4,10/5,9/6}{(15,3)}
\minorats{11}{1/3,11/4,10/5,9/6}{(15,4)}
\minorats{11}{11/5}{(16,1)}
\minorats{11}{11/5,10/6}{(16,2)}
\minorats{11}{1/4,11/5,10/6}{(16,3)}
\minorats{11}{1/5,11/6}{(17,2)}
\end{tikzpicture}
\]

We prove the theorem by induction on $\ell$.  The case $\ell=0$ is a tautology.  The case $\ell=1$
is straightforward to verify; this was the first example given after the theorem statement.  For $\ell\geq 2$
we observe that the contiguous minors and the truncated Aztec diamond Laurent polynomials satisfy the same recurrence.
\end{proof}

\begin{corollary}\label{entry-Laurent}
  Any matrix entry $M_{i,j}$ of an $n\times n$ matrix $M$ can be
  expressed as a positive-coefficient Laurent polynomial in the central minors of $M$.
\end{corollary}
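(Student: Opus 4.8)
The plan is to obtain the corollary directly from Theorem~\ref{domino}. The first ingredient is that for any region $R$ the Laurent polynomial $P(R)$ has positive coefficients in the variables $v_{x',y'}$: by construction $P(R)$ equals $W(R)$ — the sum over domino tilings of $R$ of the product of the domino weights \eqref{h-domino-wt}--\eqref{v-domino-wt}, each summand being a monomial with coefficient $+1$ — times a single monomial in the $v$'s, and multiplying a positive-coefficient Laurent polynomial by a monomial preserves this property. Substituting $v_{x',y'}=\CM_{x',y'}(M)$ and invoking Theorem~\ref{domino}, it follows that every contiguous minor $\CM_{a,b,y}(M)$ is a positive-coefficient Laurent polynomial in the central minors of $M$.

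The second ingredient is that each matrix entry $M_{i,j}$ is a contiguous minor, namely the $1\times 1$ minor $\CM_{i,j,1}(M)=\det M_i^j$, and is realized by Theorem~\ref{domino} from a truncated Aztec diamond at height $y=1$. To see this, note that as $x'$ runs cyclically the order-$1$ central minors $\CM_{x',1}(M)$ are the $1\times1$ minors $M^{b(x')}_{a(x')}$ whose index pairs $(a(x'),b(x'))$ trace a closed staircase in $(\Z/n)\times(\Z/n)$, advancing the row by one or the column by one at alternate steps. The cells along the equator of $\TAD_{x,1,\ell,n}$ are precisely these order-$1$ central minors for $x'\in\{x-\ell,\dots,x+\ell\}$, so by Theorem~\ref{domino} the evaluation of $P(\TAD_{x,1,\ell,n})$ is the $1\times1$ minor whose row index is $a(x+\ell)$ and whose column index is $b(x-\ell)$ — or, in the complementary case of the theorem (the one forced by the parity of the staircase step entering the rightmost cell), it is $M^{b(x+\ell)}_{a(x-\ell)}$. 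Solving $a(x\pm\ell)=i$ and $b(x\mp\ell)=j$ for $x$ and $\ell$ — using whichever of the two cases keeps $\ell\geq 0$, together with the freedom to translate and rotate the indices — shows that every entry $M_{i,j}$ is produced this way. Combined with the first ingredient, this proves the corollary.

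The only delicate point is this second ingredient, and only at the level of bookkeeping: choosing the center $x$, the order $\ell$, and the orientation so as to land on a prescribed pair $(i,j)$, and verifying the ``differs from its neighbor'' side conditions in Theorem~\ref{domino}. This is a routine manipulation of the floor functions defining $\CM_{x,y}$; all the real content is already present in the order-$1$ example worked out immediately after Theorem~\ref{domino}, which writes $M_{1,3}$ (for $n=7$) as $\frac{M_{7,3}M_{1,4}}{M_{7,4}}+\frac{D}{M_{7,4}}$, where $M_{7,3}$, $M_{1,4}$, $M_{7,4}$, and the $2\times2$ minor $D$ with rows $\{7,1\}$ and columns $\{4,3\}$ are all central minors of $M$ — exhibiting $M_{1,3}$ as a two-term positive Laurent polynomial in central minors.
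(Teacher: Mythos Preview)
Your proof is correct and follows the same approach as the paper: apply Theorem~\ref{domino} with $y=1$ and suitable $x,\ell$, noting that the construction of $P(R)$ as a monomial times a sum over domino tilings guarantees positive coefficients. The paper's own proof is just the one-line invocation of Theorem~\ref{domino} at $y=1$; your version supplies the details (the positivity argument and the staircase bookkeeping for choosing $x$ and $\ell$) that the paper leaves implicit.
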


\begin{proof}
  We use Theorem~\ref{domino} to express $M_{i,j}=P(\TAD_{x,y,\ell,n})$
  for $y=1$ and a suitable choice of $x$, and $\ell$.
\end{proof}

\begin{corollary}\label{contiguous-Laurent}
  For an $n\times n$ matrix $M$, any contiguous minor with disjoint
  row and column indices is a positive-coefficient Laurent polynomial
  in the small central minors of $M$.
\end{corollary}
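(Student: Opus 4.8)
The plan is to deduce the corollary from Theorem~\ref{domino}. Write the given minor as $\CM_{a,b,y}(M)$ with row arc $R$ and column arc $C$; disjointness of the two index sets forces $0\le y\le n/2$. By Theorem~\ref{domino} --- for a suitable center $x$, order $\ell\ge0$, and choice of which of the theorem's two sub-cases to use --- one has $\CM_{a,b,y}(M)=P(\TAD_{x,y,\ell,n})$ with each $v_{x',y'}$ set to $\CM_{x',y'}(M)$. By definition $P(\TAD_{x,y,\ell,n})$ is the weighted sum of domino tilings of the truncated Aztec diamond (weights \eqref{h-domino-wt}--\eqref{v-domino-wt}) times a monomial factor; since every domino weight is a product of reciprocals of $v$-variables and the monomial factor is a product of $v$-variables, all with coefficient $1$, the result is a Laurent polynomial with positive coefficients in exactly those $v_{x',y'}$ that appear in the truncated diamond (those at height $y'=0$ equal the empty determinant $\CM_{x',0}(M)=1$). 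So the corollary reduces to the claim that, for a suitable realization, every central minor $\CM_{x',y'}(M)$ occurring with $y'\ge1$ is a small central minor, i.e.\ $y'<n/2$, or $y'=n/2$ with $x'+y'$ odd.

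The central minors occurring in $\TAD_{x,y,\ell,n}$ have height between $\max(0,y-\ell)$ and $\min(n,y+\ell)$, so everything hinges on choosing a realization with $\ell$ small; I would take $\ell$ minimal among the realizations Theorem~\ref{domino} provides. A gap count bounds this minimal $\ell$: the arcs $R$ and $C$ are disjoint of size $y$, so (with the sub-case chosen suitably) the start of $C$ lies at circular distance between $y$ and $n-y$ from the start of $R$, whereas in a height-$y$ central minor the row and column arcs start at circular distance approximately $n/2$ apart --- precisely $(n\pm1)/2$ for odd $n$, and $n/2-1$ or $n/2$ for even $n$ --- as one reads off the floor-function definition of $\CM_{x,y}$. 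Hence the minimal $\ell$, essentially the number of steps needed to rotate $(R,C)$ into a central configuration, is at most $\lfloor n/2\rfloor-y$ for odd $n$ (both central partners being available) and at most $n/2-y$ for even $n$, with the even-$n$ bound attained only when $R$ and $C$ are adjacent on one side. Thus no central minor of height exceeding $\lceil n/2\rceil$ (in particular no full or near-full minor) ever occurs; for odd $n$ one even gets $y'\le y+\ell\le\lfloor n/2\rfloor<n/2$, so every occurring central minor is small and the argument is complete. The same conclusion holds for even $n$ whenever $y+\ell<n/2$, which is automatic unless $R$ and $C$ are adjacent; and if $y=n/2$ then $\CM_{a,b,y}(M)$ is itself the small central minor of height $n/2$ cut out by $R$ and $C$ (disjointness forces the correct central partner, whose parity is right).

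The remaining case --- and the step I expect to be the main obstacle --- is $n$ even with $R$ and $C$ adjacent and $y<n/2$, where the minimal realization has $\ell=n/2-y$ and the truncated diamond touches height $n/2$ only at its apex $(x,n/2)$, contributing one central minor $\CM_{x,n/2}(M)$ there; I must show $x+n/2$ is odd. The plan is to solve for $x$ explicitly from the two floor-function equations matching the row arc of the target to one extremal central minor of the diamond and the column arc to the other; these determine $x$ uniquely, and a short computation shows $x+n/2$ is odd, so $\CM_{x,n/2}(M)$ is small after all. (Alternatively, invoke the remark following the definition of small central minors that an off-center height-$n/2$ central minor is essentially interchangeable with its small counterpart $\CM_{x\pm n,n/2}(M)$, and substitute that.) Granting this parity bookkeeping, together with the routine check that Theorem~\ref{domino} does realize every contiguous minor with disjoint row and column indices, every central minor occurring in the diamond is small or the trivial minor $1$, so Theorem~\ref{domino} presents $\CM_{a,b,y}(M)$ as a positive-coefficient Laurent polynomial in the small central minors of $M$, as claimed.
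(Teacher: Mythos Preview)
Your proposal is correct and follows essentially the same route as the paper: invoke Theorem~\ref{domino}, bound the order by $\ell\le\lfloor n/2\rfloor-y$ (extreme when $R$ and $C$ are adjacent), and then argue that every central minor appearing in the truncated diamond is small. For the even-$n$ boundary case the paper phrases the parity check contrapositively---``the non-small ones [at height $n/2$] are part of truncated Aztec diamonds that give overlapping sets of row and column indices''---which is exactly your direct computation read backwards, and is a bit slicker than solving the floor equations; note, however, that your proposed alternative of invoking the ``essentially interchangeable'' remark is not available here, since that interchangeability pertains to symmetric matrices (and is made precise only later, in Theorem~\ref{symmetric-centralminorthm}, via a condensation identity rather than a direct substitution).
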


\begin{proof}
  Let $R$ and $S$ denote the row and column indices of the contiguous minor.
  We use Theorem~\ref{domino} to express $\det M_R^S = P(\TAD_{x,y,\ell,n})$
  for $y=|R|$ and a suitable choice of $x$, and $\ell$.
  We can take the order $\ell$ of the truncated Aztec diamond to be at most $\lfloor n/2\rfloor-y$, with the extreme case being when $R$ and $S$ are adjacent.
The top of the truncated Aztec diamond has height at most $\lfloor n/2\rfloor$.  For odd $n$, the central minors up to that height are all small.  For even $n$, half the central minors at height $\lfloor n/2\rfloor$ are not small, but the non-small ones are part of truncated Aztec diamonds that give overlapping sets of row and column indices.
\end{proof}

\begin{corollary}\label{minor-Laurent}
  Any minor of a matrix with disjoint row and column indices
  is a Laurent polynomial in the small central minors of the matrix.
\end{corollary}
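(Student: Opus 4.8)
The plan is to bootstrap from the contiguous case, Corollary~\ref{contiguous-Laurent}, using nothing more than the Leibniz expansion of a determinant. Let $R$ and $S$ be disjoint subsets of $\{1,\dots,n\}$ of equal size $k$, and consider the minor $\det M_R^S$. First I would expand $\det M_R^S$ by the Leibniz formula, writing it as an integer-coefficient polynomial in the entries $\{M_{i,j}:i\in R,\ j\in S\}$. Because $R$ and $S$ are disjoint, every entry that appears in this expansion has $i\neq j$; that is, it is an off-diagonal entry of $M$.

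Next I would note that each such off-diagonal entry $M_{i,j}$ is itself a $1\times 1$ contiguous minor, with one-element row index set $\{i\}$ and one-element column index set $\{j\}$, and these are disjoint precisely because $i\neq j$. Hence Corollary~\ref{contiguous-Laurent} applies to it verbatim and expresses $M_{i,j}$ as a (positive-coefficient) Laurent polynomial in the small central minors of $M$. One could instead invoke Corollary~\ref{entry-Laurent} here, but then one must check that for a single entry the associated truncated Aztec diamond has small enough order that only \emph{small} central minors occur; quoting Corollary~\ref{contiguous-Laurent} sidesteps this bookkeeping.

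Finally, since the Laurent polynomials in the fixed finite family of small central minors of $M$ form a commutative ring, substituting these expressions for the entries into the integer polynomial coming from the Leibniz expansion and collecting terms produces a Laurent polynomial in the small central minors, as claimed. The signs in the Leibniz formula are exactly why the resulting coefficients need not be positive, which is why Corollary~\ref{minor-Laurent} asserts only that the minor is a Laurent polynomial. I do not expect any serious obstacle here: the one point requiring care is the verification that the entries arising in the expansion are off-diagonal, and this is guaranteed by — and is the entire role of — the hypothesis that $R$ and $S$ are disjoint.
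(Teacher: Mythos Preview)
Your argument is correct and is exactly the paper's proof: expand the minor as a polynomial in the (off-diagonal, by disjointness of $R$ and $S$) matrix entries, then invoke Corollary~\ref{contiguous-Laurent} to write each such entry as a Laurent polynomial in the small central minors. Your remark about preferring Corollary~\ref{contiguous-Laurent} over Corollary~\ref{entry-Laurent} to avoid checking smallness is apt, and indeed the paper makes the same choice.
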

\begin{proof}
  Any minor is a polynomial in the matrix entries,
  and by Corollary~\ref{contiguous-Laurent},
  the off-diagonal matrix entries
  are Laurent polynomials in the small central minors.
\end{proof}

\subsection{Positivity}

A noninterlaced minor of a matrix is a Laurent polynomial in the
small central minors (by Corollary~\ref{minor-Laurent}), but we do not know
whether or not the coefficients are always positive.  But as the next
theorem shows, noninterlaced minors are positive rational functions
of the small central minors.  Our positivity results in this section were
discussed in the survey \cite{kenyon:surfaces}.

\begin{theorem}\label{centralminorthm}
  Any noninterlaced minor of a matrix is a positive rational function
  of its small central minors.
\end{theorem}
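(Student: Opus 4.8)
The plan is to reduce the statement to Corollary~\ref{contiguous-Laurent}, which already expresses every contiguous minor with disjoint index sets as a positive-coefficient Laurent polynomial in the small central minors. Interpreting ``positive rational function'' in the subtraction-free sense (a quotient $P/Q$ of two polynomials with nonnegative coefficients), it then suffices to show that an arbitrary noninterlaced minor is a subtraction-free rational expression in the contiguous minors with disjoint index sets: composing subtraction-free expressions again yields a subtraction-free expression, so substituting Corollary~\ref{contiguous-Laurent} finishes the proof. (A fortiori the value is positive whenever the small central minors are positive, e.g.\ for the response matrix of a well-connected network.)

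So let the noninterlaced minor be $\det M_A^B$ with $A$ contained in an arc $I$ and $B$ in the complementary arc $J$, so that $M_A^B$ is a submatrix of the rectangular block $N:=M_I^J$. Here I would invoke the classical theory of total positivity (see \cite{MR1745560}): regard $N$ as the weighted path matrix of a canonical acyclic planar (Lindström) network, whose edge weights are given by a Chamber-Ansatz formula as subtraction-free ratios of products of the solid minors of $N$ (contiguous rows and contiguous columns). By the Lindström--Gessel--Viennot lemma every minor of $N$, in particular $\det M_A^B$, is the generating function of families of vertex-disjoint paths in this network, hence a polynomial with nonnegative coefficients in the edge weights, hence a subtraction-free rational expression in the solid minors of $N$. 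Each solid minor of $N$ is $\det M$ restricted to a contiguous sub-arc of $I$ and a contiguous sub-arc of $J$; since $I$ and $J$ are disjoint arcs, this is a contiguous minor of $M$ with disjoint index sets (including the size-$1$ solid minors, which are off-diagonal entries of $M$ handled by Corollaries~\ref{entry-Laurent} and~\ref{contiguous-Laurent}, and the size-$0$ minor, which is $1$). Composing gives the theorem. Because the Chamber-Ansatz and Lindström formulas are polynomial identities in the entries of $N$, independent of any positivity, the resulting subtraction-free expression is a formal rational identity and therefore holds for every matrix $M$.

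The step I expect to be the main obstacle is pinning down the precisely correct subtraction-free form of the Chamber-Ansatz parametrization and making the bookkeeping rigorous: choosing the canonical Lindström network for the block $N$ so that its edge-weight formulas use only solid minors of $N$; checking that these translate back to contiguous minors of $M$ with \emph{disjoint} index sets (rather than minors straddling the two arcs, which would no longer be noninterlaced); and absorbing the degenerate cases --- minors of $N$ that vanish identically, size-$0$ and size-$1$ solid minors, and the slightly different rounding conventions for the central minors when $n$ is even --- without reintroducing cancellations.

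An alternative, more network-theoretic route would be to reduce to the standard well-connected network, write $\det M_A^B$ as a ratio of grove partition functions (manifestly subtraction-free in the edge conductances), and then express each conductance subtraction-free in the small central minors; but the reconstruction of Section~\ref{sec:standard} uses Pfaffians and is not manifestly subtraction-free, so this route would first require producing a manifestly positive reconstruction procedure, which seems harder than the total-positivity argument above.
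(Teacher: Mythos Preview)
Your approach is sound and takes a genuinely different route from the paper's. Both arguments reduce to Corollary~\ref{contiguous-Laurent} by first showing that an arbitrary noninterlaced minor is a subtraction-free rational function of the \emph{contiguous} noninterlaced minors, but the reductions are different.

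The paper does this by a single three-term determinantal identity it calls the ``jaw move'':
\[
\det M_{\widehat{b}}\,\det M_{\widehat{a,c}}^{\widehat{d}}
\;=\;
\det M_{\widehat{a}}\,\det M_{\widehat{b,c}}^{\widehat{d}}
\;+\;
\det M_{\widehat{c}}\,\det M_{\widehat{a,b}}^{\widehat{d}},
\]
obtained by expanding a $3\times3$ determinant with a repeated row and then applying Jacobi's identity. If a noninterlaced minor has an unused node $b$ sitting strictly between two used nodes $a,c$ on one side, and $d$ is an extreme used node on the other side, this identity expresses the minor as a positive rational combination of noninterlaced minors that are strictly simpler (fewer rows, or the same number of rows but fewer interspersed unused nodes). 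A short induction on that complexity measure reaches the contiguous case.

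Your route via the block $N=M_I^J$, a canonical Lindstr\"om network for $N$, and the Chamber-Ansatz edge weights is more structural: it packages the entire reduction into one appeal to the total-positivity machinery of~\cite{MR1745560}, and Lindstr\"om--Gessel--Viennot then gives every minor of $N$ at once as a subtraction-free polynomial in the edge weights, hence in ratios of solid minors of $N$. The translation back to $M$ is exactly as you say: a solid minor of $N$ uses a contiguous sub-arc of $I$ and a contiguous sub-arc of $J$, so it is a contiguous minor of $M$ with disjoint index sets, and Corollary~\ref{contiguous-Laurent} applies. The bookkeeping you flag is real but routine: for the staircase network (equivalently the lexicographically minimal double wiring diagram), the chamber minors are precisely the initial solid minors of $N$, and the edge-weight formulas are biratios of these; since the resulting equalities are polynomial identities in the entries of $N$, they hold generically and hence formally. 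The paper's jaw-move argument buys you a completely elementary, self-contained proof with an explicit inductive invariant; your argument buys a conceptual explanation tying the result to the standard positivity tests for totally positive matrices, at the cost of importing that machinery.
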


\begin{proof}
Let $A$ be a matrix, and let $a,b,c$ index some of its columns, and $z,d$ index some of its rows.  It is elementary that
\[
0=
\begin{vmatrix}
A_{z,a}&A_{z,b}&A_{z,c}\\
A_{z,a}&A_{z,b}&A_{z,c}\\
A_{d,a}&A_{d,b}&A_{d,c}\\
\end{vmatrix}
=
A_{z,a}
\begin{vmatrix}
A_{z,b}&A_{z,c}\\
A_{d,b}&A_{d,c}\\
\end{vmatrix}
-A_{z,b}
\begin{vmatrix}
A_{z,a}&A_{z,c}\\
A_{d,a}&A_{d,c}\\
\end{vmatrix}
+A_{z,c}
\begin{vmatrix}
A_{z,a}&A_{z,b}\\
A_{d,a}&A_{d,b}\\
\end{vmatrix}.
\]
Suppose $A$ is invertible, and let $M$ denote its inverse; $a,b,c$ index rows of $M$ and $z,d$ index columns of $M$.
Dividing through by $(\det A)^2$ and using Jacobi's identity, we obtain
\[ 0 = \det M_{\widehat{a}}^{\widehat{z}} \det M_{\widehat{b,c}}^{\widehat{z,d}} - \det M_{\widehat{b}}^{\widehat{z}} \det M_{\widehat{a,c}}^{\widehat{z,d}} + \det M_{\widehat{c}}^{\widehat{z}} \det M_{\widehat{a,b}}^{\widehat{z,d}}. \]
Since this is a polynomial equation that holds generically, it must always hold.
As column $z$ is always excluded, it need not index an actual column of $M$.
Dropping $z$ and rearranging terms we obtain
\[ \det M_{\widehat{b}} \det M_{\widehat{a,c}}^{\widehat{d}}  = \det M_{\widehat{a}} \det M_{\widehat{b,c}}^{\widehat{d}} + \det M_{\widehat{c}} \det M_{\widehat{a,b}}^{\widehat{d}}. \]
For example, if $M=M^{9,8,7,6,5}_{1,2,3,4}$ and $a,b,c,d=9,8,5,4$, we have
\[
\minors{9}{1/9,2/7,3/6,4/5} =
\frac{\minors{9}{1/8,2/7,3/6,4/5} \minors{9}{1/9,2/7,3/6}
  +
\minors{9}{1/9,2/8,3/7,4/6}  \minors{9}{1/7,2/6,3/5}}{\minors{9}{1/8,2/7,3/6}}.
\]

We call transformations of this type the ``jaw move''.
For a given noninterlaced minor interspersed with at least one isolated node, we can take $b$ to be one of the interspersed isolated nodes, and $a$ and $c$ to be the first and last of the nodes on the same side as $b$, and $d$ to be either first or last node on the other side as $b$.  With this choice of $a,b,c,d$, the jaw move expresses the original determinant as a positive rational function of ``simpler'' noninterlaced minors, where a determinant is simpler if it has fewer strands, or else the same number of strands but fewer interspersed isolated nodes.

By repeated application of the jaw move, any noninterlaced minor can be expressed as a positive rational function of noninterlaced contiguous minors.
Then we can use
Corollary~\ref{contiguous-Laurent}
to express the noninterlaced contiguous minors 
as positive Laurent polynomials in
the small central minors.
\end{proof}

\begin{theorem}\label{symmetric-centralminorthm}
  Any noninterlaced minor of a symmetric $n\times n$ matrix
  is a positive rational function
  of the small central minors for which $1\leq x\leq n$.
\end{theorem}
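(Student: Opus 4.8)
The plan is to bootstrap from Theorem~\ref{centralminorthm}: applied to an arbitrary $n\times n$ matrix, it already expresses any noninterlaced minor as a (subtraction-free) positive rational function of the small central minors with $1\le x\le 2n$. Applied to a symmetric matrix $M$, it therefore suffices to prove that each small central minor $\CM_{x,y}(M)$ with $n<x\le 2n$ is itself a positive rational function of the small central minors with $1\le x\le n$, and then to substitute these expressions into the formula supplied by Theorem~\ref{centralminorthm}.

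First I would dispose of the cases covered by the transpose symmetry $\det M_R^S=\det M_S^R$ of a symmetric matrix, which gives $\CM_{x+n,y}(M)=\CM_{x,y}(M)$ whenever $n$ is odd, and whenever $n$ is even with $x+y$ odd, since in exactly those cases the row-interval of $\CM_{x+n,y}$ is the column-interval of $\CM_{x,y}$ and vice versa. In particular, for odd $n$ the set of small central minors is unchanged upon restricting to $1\le x\le n$, so Theorem~\ref{centralminorthm} already gives the statement; and for even $n$ the only small central minors with $n<x\le 2n$ not yet handled are the ``miscentered'' ones $\CM_{x,y}$ with $y<n/2$ and $x+y$ even, which are genuinely distinct from $\CM_{x-n,y}$.

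For such a miscentered minor $C=\CM_{x,y}$ (with $n$ even, $n<x\le 2n$, $y<n/2$, $x+y$ even), the row and column index sets are contiguous and disjoint, so a single Desnanot--Jacobi step --- the $\ell=1$ case of Theorem~\ref{domino} --- expresses $C$ as a subtraction-free expression $\tfrac{AB+CD}{E}$ in nearby central minors of heights $y-1$, $y$, $y+1$; for instance when $n=4$ this gives $M_{2,3}=(\CM_{1,2}+\CM_{4,1}\CM_{2,1})/\CM_{1,1}$. I would choose the Desnanot--Jacobi step so that all the terms of height $\le y$ that occur --- the denominator $E$ and the lower factors --- fold back into the range $1\le x''\le n$ under $\CM_{x''+n,y''}=\CM_{x'',y''}$, leaving only the single height-$(y+1)$ factor possibly miscentered, and then recurse upward on the height; this terminates at height $\lfloor n/2\rfloor$, where $x''+y''$ is forced odd and hence every small central minor already folds into the range. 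Carrying this out requires checking that the parities of $x$, $y$, and $n/2$ always permit such a choice of Desnanot--Jacobi step --- equivalently, that one never gets stuck trading one miscentered minor for two miscentered minors of the same height --- and this parity-and-rounding verification is the point where the argument is most delicate; everything else is routine bookkeeping on top of Theorems~\ref{centralminorthm} and \ref{domino} (together with Corollary~\ref{entry-Laurent} in the base case $y=1$).
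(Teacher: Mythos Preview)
Your overall strategy matches the paper's exactly: invoke Theorem~\ref{centralminorthm}, dispose of odd $n$ and of even $n$ with $x+y$ odd by the transpose identity $\CM_{x+n,y}=\CM_{x,y}$, and then treat the remaining miscentered minors ($n$ even, $x+y$ even, $y<n/2$) by a single condensation step. Your $n=4$ example is precisely the right computation.

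Where you go astray is in the last paragraph. No recursion on height is needed, and there is no ``delicate parity-and-rounding verification.'' If you center the $\ell=1$ instance of Theorem~\ref{domino} at the \emph{partner} position $(x-n,y)$ (which already lies in $1\le x-n\le n$), then after one use of $M=M^T$ the contiguous minor on the left is exactly $\CM_{x,y}$, and the five central minors on the right are $\CM_{x-n,y}$ (the denominator $E$) together with $\CM_{x-n\pm 1,y}$ and $\CM_{x-n,y\pm 1}$. Now observe the parities: since $x+y$ and $n$ are both even, each of $(x-n\pm 1)+y$ and $(x-n)+(y\pm 1)$ is odd, so all four of those terms are \emph{perfectly centered} and fold into $1\le x''\le n$ by symmetry. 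In particular the height-$(y{+}1)$ factor $\CM_{x-n,\,y+1}$ is centered, not ``possibly miscentered.'' The only off-center term on the right is the denominator $E=\CM_{x-n,y}$ itself, which does \emph{not} fold via the symmetry relation (contrary to what you wrote) but is already in range because it is the partner. So the procedure terminates in one step; this is what the paper does, illustrated for $n=8$ by
\[
\CM_{14,2}\;=\;\frac{\CM_{5,2}\,\CM_{7,2}+\CM_{6,1}\,\CM_{6,3}}{\CM_{6,2}}\,.
\]
Your own $n=4$ example already exhibits this one-step termination (the term $\CM_{1,2}$ at height $y+1=2=n/2$ is centered), so the recursion you propose would have stopped immediately had you checked.
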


\begin{proof}
This essentially follows from Theorem~\ref{centralminorthm}.
If the matrix is symmetric and $n$ is odd, then $\CM_{x,y}=\CM_{x+n,y}$,
so we can restrict to using the central minors for which $1\leq x\leq n$.
The remaining case to check is for $n$ even, since the (minimally) off-center contiguous minors come in pairs, of which only one is a small central minor, even for symmetric matrices.  Using a condensation move we can express a minimally off-center contiguous minor
 in terms of its opposite minimally off-center contiguous
minor
and central contiguous minors
as shown below:
\[
\minors{8}{1/7,2/6} = \frac{\minors{8}{1/6,2/5} \ \minors{8}{2/7,3/6} + \minors{8}{2/6} \ \minors{8}{1/7,2/6,3/5}}{\minors{8}{2/6,3/5}}\,.\qedhere
\]
\end{proof}

\begin{corollary}\label{centralminorcor}
  If each of the $\binom{n}{2}$ small central minors of the response matrix of a network is positive then the network is well-connected.
\end{corollary}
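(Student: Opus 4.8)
The plan is to obtain the corollary directly from Theorem~\ref{symmetric-centralminorthm}, combined with the characterization recalled at the start of this section \cite{CdV}: a network is well-connected if and only if every noninterlaced minor of its response matrix $L$ is strictly positive. Since $L$ is symmetric, Theorem~\ref{symmetric-centralminorthm} says precisely that each noninterlaced minor of $L$ is a positive rational function of the small central minors with $1\le x\le n$, of which there are $\binom{n}{2}$.

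The argument then has three steps. First, fix an arbitrary noninterlaced minor of $L$ and, via Theorem~\ref{symmetric-centralminorthm}, express it as a positive rational function of the small central minors. Second, observe that under the hypothesis of the corollary --- that all $\binom{n}{2}$ small central minors are positive --- this rational function evaluates to a positive number, so the minor is positive; since the minor was arbitrary, every noninterlaced minor of $L$ is positive. Third, invoke the equivalence from \cite{CdV} to conclude that the network is well-connected.

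There is essentially no obstacle: all the substantive work lives in Theorems~\ref{domino}, \ref{centralminorthm}, and \ref{symmetric-centralminorthm}, and the corollary merely packages them. The only things to keep straight are that the collection of small central minors named in the hypothesis coincides with the $\binom{n}{2}$-element set appearing in Theorem~\ref{symmetric-centralminorthm} (immediate from the definition of small central minors for symmetric matrices, where the count $\binom{n}{2}$ was established regardless of the parity of $n$), and that the even-$n$ subtlety about the minimally off-center contiguous minors occurring in pairs has already been handled inside the proof of Theorem~\ref{symmetric-centralminorthm} by a condensation move, so nothing more is needed here. One need not prove a converse, though it is worth noting that each small central minor is itself a noninterlaced minor --- it is contiguous, and by the stated disjointness condition its row and column index sets are disjoint --- so positivity of the small central minors is in fact also necessary for well-connectedness.
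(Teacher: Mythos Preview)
Your proposal is correct and matches the paper's intended argument: the corollary is stated without proof immediately after Theorem~\ref{symmetric-centralminorthm}, as it follows directly from that theorem together with the \cite{CdV} characterization of well-connectedness recalled at the start of the section. Your write-up simply makes explicit the one-line deduction the paper leaves to the reader.
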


We believe that these results have analogs for non-well-connected networks:
\begin{conjecture}
  For a minimal network with $m$ edges, there is a ``base set'' $S$ of $m$
  noninterlaced minors of the response matrix, analogous to the set of
  small central minors of a well-connected network, such that any
  nonzero noninterlaced minor of the response matrix is (1) a Laurent
  polynomial in minors from $S$, and (2) a positive function of minors
  from $S$.
\end{conjecture}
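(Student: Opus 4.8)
The plan is to follow the architecture of the well-connected case (Theorems~\ref{domino} and~\ref{centralminorthm} and their corollaries), replacing each ingredient by a version adapted to the cell of $\Omega_n$ containing the response matrix. First I would pin down the base set~$S$: for a standard minimal network $\G$ with its rectilinear strand diagram coming from a Dyck tiling, attach to each crossing $\chi$ the noninterlaced minor $N_\chi=\det L_A^B$ whose row set $A$ lies in the left region of~$\chi$ and column set~$B$ in the right region (in the sense of Section~\ref{sec:tripod-variable}), chosen to be ``as central as possible'' subject to those constraints, together with one exterior minor~$N_-$; when $\G$ is the minimal well-connected network this should recover the $\binom n2$ small central minors, and in general one expects $|S|=m$ (the $m+1$ tripod variables, one per crossing plus the exterior one, carry a single monomial relation, so the corresponding minor family should too). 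The conjectural correspondence between $S$ and the edges should be compatible with the cell structure, so that degenerating a conductance to~$0$ or~$\infty$ sends the matching base minor to~$0$ or~$\infty$, mirroring the attaching maps of Lam--Pylyavskyy.

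For part~(1), I would first reduce a general nonzero noninterlaced minor to noninterlaced minors whose row and column sets are cyclically contiguous. This is exactly the ``jaw move'' of Theorem~\ref{centralminorthm}: Desnanot--Jacobi lets one trade a noninterlaced minor with an interspersed isolated node for strictly simpler ones, and the resulting expression is a positive rational function. The new point is that one must check the move stays inside the class of minors that are \emph{not identically zero on the given cell}; that class is governed by $\G$'s vertex-disjoint-path structure, i.e.\ by which noninterlaced pairs $(A,B)$ are linked in~$\G$, so one needs a combinatorial lemma that a jaw move sends linked pairs to linked pairs. Then, to express a contiguous-type noninterlaced minor in terms of~$S$, I would run a Kuo-style graphical condensation (as in the proof of Theorem~\ref{domino}) on a region read off from the Dyck tiling, whose unit cells are labelled by intermediate contiguous minors and whose condensation recurrence is again the Desnanot--Jacobi / Kuo identity. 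Since any minor is a polynomial in the matrix entries and, by the contiguous case (the analog of Corollary~\ref{entry-Laurent}), the off-diagonal entries of $L$ restricted to the cell are Laurent in~$S$, part~(1) would follow once the contiguous case is settled.

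For part~(2), positivity as a rational function, the jaw-move reduction already yields a positive rational expression, so it suffices to treat contiguous-type noninterlaced minors; there one wants the analog of Corollary~\ref{contiguous-Laurent}, that each such minor is a positive-coefficient Laurent polynomial in~$S$. In the well-connected case this holds because the region is an actual Aztec diamond, each of whose domino tilings contributes a positive monomial in the central minors. So the task is to exhibit a ``positive tiling model'' for the region attached to an arbitrary standard network. An alternative route to~(2) is induction on the dimension of the cell using the Lam--Pylyavskyy cell structure: on a facet one conductance degenerates, $\G$ loses an edge, the corresponding base minor degenerates, and one tries to show the base set and the positivity statement behave continuously under this degeneration, with the well-connected cells (Theorem~\ref{symmetric-centralminorthm}, Corollary~\ref{centralminorcor}) as the base case.

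The hard part will be the combinatorial model underlying parts~(1) and~(2): it is not clear that a general standard network produces a region as well-behaved as an Aztec diamond, and the uniform count $2^{\ell(\ell+1)/2}$ of tilings together with the uniform positivity of every resulting monomial --- the two facts that make the well-connected argument go through --- are special to that cell. Equally serious is the input needed to run the jaw-move reduction and the condensation identities without ever dividing by an identically-zero minor: this requires knowing precisely which noninterlaced minors vanish on a given cell, which is essentially the still-open problem of deciding which cell a response matrix lies in; see \cite{ALT} for progress in that direction.
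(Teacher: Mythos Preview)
The statement you are addressing is a \emph{conjecture} in the paper, not a theorem: the authors do not supply a proof, so there is nothing to compare your proposal against. What you have written is not a proof either, and you say so yourself in the final two paragraphs; it is an outline of a plausible attack that mirrors the well-connected argument and then honestly lists the obstructions.

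Two concrete gaps are worth flagging beyond those you already name. First, your definition of the base set $S$ is not actually a definition: ``as central as possible subject to those constraints'' does not pin down a unique minor attached to each crossing, and you have not verified that the resulting $m$ minors are algebraically independent on the cell (the analogue of the fact that the $\binom{n}{2}$ small central minors determine the response matrix). Without that, the Laurent-polynomial claim in part~(1) has no content. Second, the jaw-move reduction in Theorem~\ref{centralminorthm} divides by minors that are strictly positive on the interior of $\Omega_n$; on a lower-dimensional cell many noninterlaced minors vanish identically, and you need the specific minors appearing as denominators in the jaw recursion to be among the nonzero ones. Your proposed ``combinatorial lemma that a jaw move sends linked pairs to linked pairs'' is necessary but not sufficient: you also need the \emph{denominator} minor (the one with $b$ reinserted) to be linked, and that can fail. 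Until the base set is made precise and this divisibility issue is resolved, the proposal remains a heuristic rather than a proof sketch.
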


\subsection{Domino regions for semicontiguous minors}

We believe that Theorem~\ref{domino} can be extended to semicontiguous
minors, i.e., minors $M_A^B$ where only one of $A$ or $B$ is
contiguous.  
\begin{conjecture}
For every semicontiguous minor, there is an associated domino tiling region,
such that the semicontiguous minor is a Laurent polynomial central minors
with each Laurent monomial corresponding to a domino tiling of the region,
according to the same rule as in Theorem~\ref{domino}.
\end{conjecture}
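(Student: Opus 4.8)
The plan is to push the induction behind Theorem~\ref{domino} one level further. The bare assertion that a semicontiguous minor is a Laurent polynomial in the small central minors already follows from Corollary~\ref{entry-Laurent} (expand the minor as a polynomial in its matrix entries), so the real content of the conjecture is the explicit positive combinatorial model. Fix a semicontiguous minor $\det M_A^B$ of an $n\times n$ matrix $M$, say with $A$ cyclically contiguous and $B=\{b_1<b_2<\dots<b_k\}$ not contiguous, and call $\sum_i(b_{i+1}-b_i-1)$, the total size of the cyclic gaps of $B$, the \emph{defect} of the minor; the defect is $0$ exactly when $B$ is also contiguous. I would induct on the pair (defect, $|A|$), ordered lexicographically. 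The base case, defect $0$, is Theorem~\ref{domino}, in which the region is a truncated Aztec diamond $\TAD_{x,y,\ell,n}$. The target is a region $R=R(A,B)$ obtained from such a truncated Aztec diamond by carving one rectangular ``dent'' into a slanted side for each gap of $B$, with the cells still carrying central minors under the recursive labeling of Theorem~\ref{domino}, one distinguished boundary cell carrying $\det M_A^B$, and the monomial--tiling rule inherited verbatim.

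The inductive step is the ``jaw move'' already used in the proof of Theorem~\ref{centralminorthm}. Picking a gap of $B$, a column index $c$ inside it, the neighbouring indices $b_i,b_{i+1}\in B$, and a row $d$ at one end of $A$, the Desnanot--Jacobi (three-term Plücker) identity gives a relation of the form
\[
\det M_A^B \cdot \det M_{A'}^{C_0}
 \;=\; \det M_A^{C_1}\cdot \det M_{A'}^{C_2} \;+\; \det M_A^{C_3}\cdot \det M_{A'}^{C_4},
\]
with $A'=A\setminus\{d\}$, where every minor on the right has defect strictly smaller than $\det M_A^B$, or else equal defect but fewer rows. On the combinatorial side I would establish a Kuo-type graphical condensation for the dented regions $R(A,B)$: the weighted domino generating function of $R(A,B)$ should obey the same three-term relation, with ``reducing the defect'' corresponding to filling in a dent and ``dropping a row of $A$'' corresponding to shaving a strip off the diamond in the $y$-direction (exactly as ordinary $y$-truncation does). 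The two recursions then coincide and the base cases match, so $\det M_A^B=P(R(A,B))$ evaluated at $v_{x',y'}=\CM_{x',y'}(M)$, and the domino tilings of $R(A,B)$ index its Laurent monomials.

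The remaining verifications parallel those in the proof of Theorem~\ref{domino} and Corollary~\ref{contiguous-Laurent}: that every central-minor variable occurs with degree in $\{-1,0,+1\}$ in each monomial, which follows from the local combinatorics of the dented region just as for the Aztec diamond; that $R(A,B)$ is independent of the order in which the gaps of $B$ are processed, which should follow from uniqueness of solutions to the recursion with prescribed boundary data; and the even-$n$ subtlety about the non-small central minors at height $n/2$, handled as in Corollary~\ref{contiguous-Laurent} by observing that they enter only through regions with overlapping row/column index sets. Positivity of the coefficients then comes for free from the tiling model.

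The hard part is the middle paragraph: producing the explicit dented truncated-Aztec regions and proving a Kuo-style condensation for them. Kuo's theorem is stated for Aztec diamonds, and although general graphical-condensation machinery is available, one must place the dents so that the boundary cells of $R(A,B)$ carry precisely the intended central minors, identify the three perturbed regions that arise when a corner square of a dent is forced, and verify the identity uniformly over all gap configurations of $B$. A secondary difficulty is that the jaw move simultaneously lowers the defect and shrinks $A$, so one must confirm that ``shrinking $A$'' is a legal operation on $R(A,B)$ --- it should be just the top/bottom truncation already present in $\TAD$ --- so that the induction really is well founded on the single lexicographic quantity (defect, $|A|$).
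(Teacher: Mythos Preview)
The paper does not prove this statement: it is stated as a conjecture, illustrated by several example regions, and the authors note that it was subsequently confirmed by Tri Lai \cite{lai:semicontiguous}. So there is no proof in the paper to compare against.

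As a proof, your proposal is an outline that identifies a plausible strategy but does not carry it through. You yourself flag the crux: ``The hard part is the middle paragraph: producing the explicit dented truncated-Aztec regions and proving a Kuo-style condensation for them.'' That \emph{is} the conjecture. Until you (i) give an explicit rule assigning a region $R(A,B)$ to each semicontiguous minor (not just ``carve a dent for each gap'' --- look at the paper's examples, where the shapes are staircase-like and depend on gap positions in a rather specific way), and (ii) prove that the weighted domino partition function of $R(A,B)$ satisfies the same three-term identity as the minors, you have restated the problem rather than solved it. The observation that semicontiguous minors are \emph{some} Laurent polynomial in central minors is, as you note, already Corollary~\ref{minor-Laurent}; the combinatorial content is entirely in (i) and (ii).

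A more specific worry: your inductive engine is the jaw move, but it is not clear that the jaw move always strictly decreases your lexicographic quantity in the way you claim. In the relation $\det M_{\widehat b}\det M_{\widehat{a,c}}^{\widehat d}=\det M_{\widehat a}\det M_{\widehat{b,c}}^{\widehat d}+\det M_{\widehat c}\det M_{\widehat{a,b}}^{\widehat d}$, deleting the columns $a$ or $c$ adjacent to the filled gap can \emph{create} a new gap on the other side of $a$ or $c$, so the defect need not drop; you would need to argue carefully that the right-hand minors are all either of smaller defect or of equal defect with smaller $|A|$, and this depends on exactly where $a,c$ sit relative to the other gaps of $B$. The paper's own use of the jaw move (in Theorem~\ref{centralminorthm}) controls a different quantity (number of interspersed isolated nodes), and its proof of Theorem~\ref{domino} uses ordinary Desnanot--Jacobi plus Kuo rather than the jaw move; whichever identity you choose, you must match it to a graphical condensation on \emph{your} regions, and that matching is where all the work lies.
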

We include here some example formulas which illustrate this conjecture.

\vspace{6pt}
\noindent\textbf{Note:} After submitting our article, this conjecture
was confirmed by Tri Lai \cite{lai:semicontiguous}.
It would be interesting if there were a further generalization
for when both sets of indices are noncontiguous.

\[
\minors{9}{2/3,1/5}
=
\begin{tikzpicture}[baseline=2cm-2.5pt]
\filldraw[blue!15!white,draw=black] (11,0) rectangle (12,1);
\filldraw[blue!15!white,draw=black] (11,1) rectangle (12,2);
\filldraw[blue!15!white,draw=black] (12,0) rectangle (13,1);
\filldraw[blue!15!white,draw=black] (12,1) rectangle (13,2);
\filldraw[blue!15!white,draw=black] (12,2) rectangle (13,3);
\filldraw[blue!15!white,draw=black] (13,0) rectangle (14,1);
\filldraw[blue!15!white,draw=black] (13,1) rectangle (14,2);
\filldraw[blue!15!white,draw=black] (13,2) rectangle (14,3);
\filldraw[blue!15!white,draw=black] (13,3) rectangle (14,4);
\filldraw[blue!15!white,draw=black] (14,1) rectangle (15,2);
\filldraw[blue!15!white,draw=black] (14,2) rectangle (15,3);
\filldraw[blue!15!white,draw=black] (14,3) rectangle (15,4);
\filldraw[blue!15!white,draw=black] (15,1) rectangle (16,2);
\filldraw[blue!15!white,draw=black] (15,2) rectangle (16,3);
\minorat{9}{8/3}{(11,1)}
\minorat{9}{}{(12,0)}
\minorat{9}{8/4}{(12,1)}
\minorat{9}{9/3,8/4}{(12,2)}
\minorat{9}{}{(13,0)}
\minorat{9}{9/4}{(13,1)}
\minorat{9}{9/4,8/5}{(13,2)}
\minorat{9}{1/3,9/4,8/5}{(13,3)}
\minorat{9}{9/5}{(14,1)}
\minorat{9}{1/4,9/5}{(14,2)}
\minorat{9}{1/4,9/5,8/6}{(14,3)}
\minorat{9}{2/3,1/4,9/5,8/6}{(14,4)}
\minorat{9}{1/5}{(15,1)}
\minorat{9}{1/5,9/6}{(15,2)}
\minorat{9}{2/4,1/5,9/6}{(15,3)}
\minorat{9}{2/5,1/6}{(16,2)}
\end{tikzpicture}
\]

\[
\minors{15}{4/7,3/8,2/9,1/11}
=
\begin{tikzpicture}[baseline=2.5cm-2.5pt]
\filldraw[blue!15!white,draw=black] (24,2) rectangle (25,3);
\filldraw[blue!15!white,draw=black] (24,3) rectangle (25,4);
\filldraw[blue!15!white,draw=black] (25,1) rectangle (26,2);
\filldraw[blue!15!white,draw=black] (25,2) rectangle (26,3);
\filldraw[blue!15!white,draw=black] (25,3) rectangle (26,4);
\filldraw[blue!15!white,draw=black] (25,4) rectangle (26,5);
\filldraw[blue!15!white,draw=black] (26,0) rectangle (27,1);
\filldraw[blue!15!white,draw=black] (26,1) rectangle (27,2);
\filldraw[blue!15!white,draw=black] (26,2) rectangle (27,3);
\filldraw[blue!15!white,draw=black] (26,3) rectangle (27,4);
\filldraw[blue!15!white,draw=black] (26,4) rectangle (27,5);
\filldraw[blue!15!white,draw=black] (27,0) rectangle (28,1);
\filldraw[blue!15!white,draw=black] (27,1) rectangle (28,2);
\filldraw[blue!15!white,draw=black] (27,2) rectangle (28,3);
\filldraw[blue!15!white,draw=black] (28,0) rectangle (29,1);
\filldraw[blue!15!white,draw=black] (28,1) rectangle (29,2);
\minorat{15}{2/7,1/8,15/9}{(24,3)}
\minorat{15}{2/8,1/9}{(25,2)}
\minorat{15}{2/8,1/9,15/10}{(25,3)}
\minorat{15}{3/7,2/8,1/9,15/10}{(25,4)}
\minorat{15}{2/9}{(26,1)}
\minorat{15}{2/9,1/10}{(26,2)}
\minorat{15}{3/8,2/9,1/10}{(26,3)}
\minorat{15}{3/8,2/9,1/10,15/11}{(26,4)}
\minorat{15}{4/7,3/8,2/9,1/10,15/11}{(26,5)}
\minorat{15}{}{(27,0)}
\minorat{15}{2/10}{(27,1)}
\minorat{15}{3/9,2/10}{(27,2)}
\minorat{15}{3/9,2/10,1/11}{(27,3)}
\minorat{15}{4/8,3/9,2/10,1/11}{(27,4)}
\minorat{15}{}{(28,0)}
\minorat{15}{3/10}{(28,1)}
\minorat{15}{3/10,2/11}{(28,2)}
\minorat{15}{3/11}{(29,1)}
\end{tikzpicture}
\]

\[
\minors{13}{3/7,2/9,1/10}
=
\begin{tikzpicture}[baseline=1.5cm-2.5pt]
\filldraw[blue!15!white,draw=black] (21,0) rectangle (22,1);
\filldraw[blue!15!white,draw=black] (21,1) rectangle (22,2);
\filldraw[blue!15!white,draw=black] (22,0) rectangle (23,1);
\filldraw[blue!15!white,draw=black] (22,1) rectangle (23,2);
\filldraw[blue!15!white,draw=black] (22,2) rectangle (23,3);
\filldraw[blue!15!white,draw=black] (23,0) rectangle (24,1);
\filldraw[blue!15!white,draw=black] (23,1) rectangle (24,2);
\filldraw[blue!15!white,draw=black] (23,2) rectangle (24,3);
\filldraw[blue!15!white,draw=black] (24,1) rectangle (25,2);
\filldraw[blue!15!white,draw=black] (24,2) rectangle (25,3);
\minorat{13}{1/7}{(21,1)}
\minorat{13}{}{(22,0)}
\minorat{13}{1/8}{(22,1)}
\minorat{13}{2/7,1/8}{(22,2)}
\minorat{13}{}{(23,0)}
\minorat{13}{2/8}{(23,1)}
\minorat{13}{2/8,1/9}{(23,2)}
\minorat{13}{3/7,2/8,1/9}{(23,3)}
\minorat{13}{2/9}{(24,1)}
\minorat{13}{3/8,2/9}{(24,2)}
\minorat{13}{3/8,2/9,1/10}{(24,3)}
\minorat{13}{3/9,2/10}{(25,2)}
\end{tikzpicture}
\]

\[
\minors{13}{4/5,3/7,2/8,1/9}
=
\begin{tikzpicture}[baseline=3cm-2.5pt]
\filldraw[blue!15!white,draw=black] (17,0) rectangle (18,1);
\filldraw[blue!15!white,draw=black] (17,1) rectangle (18,2);
\filldraw[blue!15!white,draw=black] (18,0) rectangle (19,1);
\filldraw[blue!15!white,draw=black] (18,1) rectangle (19,2);
\filldraw[blue!15!white,draw=black] (18,2) rectangle (19,3);
\filldraw[blue!15!white,draw=black] (19,0) rectangle (20,1);
\filldraw[blue!15!white,draw=black] (19,1) rectangle (20,2);
\filldraw[blue!15!white,draw=black] (19,2) rectangle (20,3);
\filldraw[blue!15!white,draw=black] (19,3) rectangle (20,4);
\filldraw[blue!15!white,draw=black] (20,1) rectangle (21,2);
\filldraw[blue!15!white,draw=black] (20,2) rectangle (21,3);
\filldraw[blue!15!white,draw=black] (20,3) rectangle (21,4);
\filldraw[blue!15!white,draw=black] (20,4) rectangle (21,5);
\filldraw[blue!15!white,draw=black] (21,2) rectangle (22,3);
\filldraw[blue!15!white,draw=black] (21,3) rectangle (22,4);
\filldraw[blue!15!white,draw=black] (21,4) rectangle (22,5);
\filldraw[blue!15!white,draw=black] (21,5) rectangle (22,6);
\filldraw[blue!15!white,draw=black] (22,3) rectangle (23,4);
\filldraw[blue!15!white,draw=black] (22,4) rectangle (23,5);
\filldraw[blue!15!white,draw=black] (22,5) rectangle (23,6);
\filldraw[blue!15!white,draw=black] (23,3) rectangle (24,4);
\filldraw[blue!15!white,draw=black] (23,4) rectangle (24,5);
\minorat{13}{12/5}{(17,1)}
\minorat{13}{}{(18,0)}
\minorat{13}{12/6}{(18,1)}
\minorat{13}{13/5,12/6}{(18,2)}
\minorat{13}{}{(19,0)}
\minorat{13}{13/6}{(19,1)}
\minorat{13}{13/6,12/7}{(19,2)}
\minorat{13}{1/5,13/6,12/7}{(19,3)}
\minorat{13}{13/7}{(20,1)}
\minorat{13}{1/6,13/7}{(20,2)}
\minorat{13}{1/6,13/7,12/8}{(20,3)}
\minorat{13}{2/5,1/6,13/7,12/8}{(20,4)}
\minorat{13}{1/7,13/8}{(21,2)}
\minorat{13}{2/6,1/7,13/8}{(21,3)}
\minorat{13}{2/6,1/7,13/8,12/9}{(21,4)}
\minorat{13}{3/5,2/6,1/7,13/8,12/9}{(21,5)}
\minorat{13}{2/7,1/8,13/9}{(22,3)}
\minorat{13}{3/6,2/7,1/8,13/9}{(22,4)}
\minorat{13}{3/6,2/7,1/8,13/9,12/10}{(22,5)}
\minorat{13}{4/5,3/6,2/7,1/8,13/9,12/10}{(22,6)}
\minorat{13}{3/7,2/8,1/9}{(23,3)}
\minorat{13}{3/7,2/8,1/9,13/10}{(23,4)}
\minorat{13}{4/6,3/7,2/8,1/9,13/10}{(23,5)}
\minorat{13}{4/7,3/8,2/9,1/10}{(24,4)}
\end{tikzpicture}
\]

\[
\minors{13}{4/5,3/6,2/7,1/9}
=
\begin{tikzpicture}[baseline=3cm-2.5pt]
\filldraw[blue!15!white,draw=black] (19,2) rectangle (20,3);
\filldraw[blue!15!white,draw=black] (19,3) rectangle (20,4);
\filldraw[blue!15!white,draw=black] (20,1) rectangle (21,2);
\filldraw[blue!15!white,draw=black] (20,2) rectangle (21,3);
\filldraw[blue!15!white,draw=black] (20,3) rectangle (21,4);
\filldraw[blue!15!white,draw=black] (20,4) rectangle (21,5);
\filldraw[blue!15!white,draw=black] (21,0) rectangle (22,1);
\filldraw[blue!15!white,draw=black] (21,1) rectangle (22,2);
\filldraw[blue!15!white,draw=black] (21,2) rectangle (22,3);
\filldraw[blue!15!white,draw=black] (21,3) rectangle (22,4);
\filldraw[blue!15!white,draw=black] (21,4) rectangle (22,5);
\filldraw[blue!15!white,draw=black] (21,5) rectangle (22,6);
\filldraw[blue!15!white,draw=black] (22,0) rectangle (23,1);
\filldraw[blue!15!white,draw=black] (22,1) rectangle (23,2);
\filldraw[blue!15!white,draw=black] (22,2) rectangle (23,3);
\filldraw[blue!15!white,draw=black] (22,3) rectangle (23,4);
\filldraw[blue!15!white,draw=black] (22,4) rectangle (23,5);
\filldraw[blue!15!white,draw=black] (22,5) rectangle (23,6);
\filldraw[blue!15!white,draw=black] (23,0) rectangle (24,1);
\filldraw[blue!15!white,draw=black] (23,1) rectangle (24,2);
\filldraw[blue!15!white,draw=black] (23,3) rectangle (24,4);
\filldraw[blue!15!white,draw=black] (23,4) rectangle (24,5);
\minorat{13}{1/5,13/6,12/7}{(19,3)}
\minorat{13}{1/6,13/7}{(20,2)}
\minorat{13}{1/6,13/7,12/8}{(20,3)}
\minorat{13}{2/5,1/6,13/7,12/8}{(20,4)}
\minorat{13}{1/7}{(21,1)}
\minorat{13}{1/7,13/8}{(21,2)}
\minorat{13}{2/6,1/7,13/8}{(21,3)}
\minorat{13}{2/6,1/7,13/8,12/9}{(21,4)}
\minorat{13}{3/5,2/6,1/7,13/8,12/9}{(21,5)}
\minorat{13}{}{(22,0)}
\minorat{13}{1/8}{(22,1)}
\minorat{13}{2/7,1/8}{(22,2)}
\minorat{13}{2/7,1/8,13/9}{(22,3)}
\minorat{13}{3/6,2/7,1/8,13/9}{(22,4)}
\minorat{13}{3/6,2/7,1/8,13/9,12/10}{(22,5)}
\minorat{13}{4/5,3/6,2/7,1/8,13/9,12/10}{(22,6)}
\minorat{13}{}{(23,0)}
\minorat{13}{2/8}{(23,1)}
\minorat{13}{2/8,1/9}{(23,2)}
\minorat{13}{3/7,2/8,1/9}{(23,3)}
\minorat{13}{3/7,2/8,1/9,13/10}{(23,4)}
\minorat{13}{4/6,3/7,2/8,1/9,13/10}{(23,5)}
\minorat{13}{2/9}{(24,1)}
\minorat{13}{4/7,3/8,2/9,1/10}{(24,4)}
\end{tikzpicture}
\]

\[
\minors{15}{5/6,4/7,3/8,2/9,1/11}
=
\begin{tikzpicture}[baseline=4cm-2.5pt]
\filldraw[blue!15!white,draw=black] (23,3) rectangle (24,4);
\filldraw[blue!15!white,draw=black] (23,4) rectangle (24,5);
\filldraw[blue!15!white,draw=black] (24,2) rectangle (25,3);
\filldraw[blue!15!white,draw=black] (24,3) rectangle (25,4);
\filldraw[blue!15!white,draw=black] (24,4) rectangle (25,5);
\filldraw[blue!15!white,draw=black] (24,5) rectangle (25,6);
\filldraw[blue!15!white,draw=black] (25,1) rectangle (26,2);
\filldraw[blue!15!white,draw=black] (25,2) rectangle (26,3);
\filldraw[blue!15!white,draw=black] (25,3) rectangle (26,4);
\filldraw[blue!15!white,draw=black] (25,4) rectangle (26,5);
\filldraw[blue!15!white,draw=black] (25,5) rectangle (26,6);
\filldraw[blue!15!white,draw=black] (25,6) rectangle (26,7);
\filldraw[blue!15!white,draw=black] (26,0) rectangle (27,1);
\filldraw[blue!15!white,draw=black] (26,1) rectangle (27,2);
\filldraw[blue!15!white,draw=black] (26,2) rectangle (27,3);
\filldraw[blue!15!white,draw=black] (26,3) rectangle (27,4);
\filldraw[blue!15!white,draw=black] (26,4) rectangle (27,5);
\filldraw[blue!15!white,draw=black] (26,5) rectangle (27,6);
\filldraw[blue!15!white,draw=black] (26,6) rectangle (27,7);
\filldraw[blue!15!white,draw=black] (27,0) rectangle (28,1);
\filldraw[blue!15!white,draw=black] (27,1) rectangle (28,2);
\filldraw[blue!15!white,draw=black] (27,2) rectangle (28,3);
\filldraw[blue!15!white,draw=black] (27,4) rectangle (28,5);
\filldraw[blue!15!white,draw=black] (27,5) rectangle (28,6);
\filldraw[blue!15!white,draw=black] (28,0) rectangle (29,1);
\filldraw[blue!15!white,draw=black] (28,1) rectangle (29,2);
\minorat{15}{2/6,1/7,15/8,14/9}{(23,4)}
\minorat{15}{2/7,1/8,15/9}{(24,3)}
\minorat{15}{2/7,1/8,15/9,14/10}{(24,4)}
\minorat{15}{3/6,2/7,1/8,15/9,14/10}{(24,5)}
\minorat{15}{2/8,1/9}{(25,2)}
\minorat{15}{2/8,1/9,15/10}{(25,3)}
\minorat{15}{3/7,2/8,1/9,15/10}{(25,4)}
\minorat{15}{3/7,2/8,1/9,15/10,14/11}{(25,5)}
\minorat{15}{4/6,3/7,2/8,1/9,15/10,14/11}{(25,6)}
\minorat{15}{2/9}{(26,1)}
\minorat{15}{2/9,1/10}{(26,2)}
\minorat{15}{3/8,2/9,1/10}{(26,3)}
\minorat{15}{3/8,2/9,1/10,15/11}{(26,4)}
\minorat{15}{4/7,3/8,2/9,1/10,15/11}{(26,5)}
\minorat{15}{4/7,3/8,2/9,1/10,15/11,14/12}{(26,6)}
\minorat{15}{5/6,4/7,3/8,2/9,1/10,15/11,14/12}{(26,7)}
\minorat{15}{}{(27,0)}
\minorat{15}{2/10}{(27,1)}
\minorat{15}{3/9,2/10}{(27,2)}
\minorat{15}{3/9,2/10,1/11}{(27,3)}
\minorat{15}{4/8,3/9,2/10,1/11}{(27,4)}
\minorat{15}{4/8,3/9,2/10,1/11,15/12}{(27,5)}
\minorat{15}{5/7,4/8,3/9,2/10,1/11,15/12}{(27,6)}
\minorat{15}{}{(28,0)}
\minorat{15}{3/10}{(28,1)}
\minorat{15}{3/10,2/11}{(28,2)}
\minorat{15}{5/8,4/9,3/10,2/11,1/12}{(28,5)}
\minorat{15}{3/11}{(29,1)}
\end{tikzpicture}
\]

\[
\minors{11}{4/5,3/6,2/10,1/11,1/11}
=
\begin{tikzpicture}[baseline=2.5cm-2.5pt]
\filldraw[blue!15!white,draw=black] (0,0) rectangle (1,1);
\filldraw[blue!15!white,draw=black] (0,1) rectangle (1,2);
\filldraw[blue!15!white,draw=black] (0,2) rectangle (1,3);
\filldraw[blue!15!white,draw=black] (0,3) rectangle (1,4);
\filldraw[blue!15!white,draw=black] (0,4) rectangle (1,5);
\filldraw[blue!15!white,draw=black] (1,0) rectangle (2,1);
\filldraw[blue!15!white,draw=black] (1,1) rectangle (2,2);
\filldraw[blue!15!white,draw=black] (1,2) rectangle (2,3);
\filldraw[blue!15!white,draw=black] (1,3) rectangle (2,4);
\filldraw[blue!15!white,draw=black] (1,4) rectangle (2,5);
\filldraw[blue!15!white,draw=black] (2,0) rectangle (3,1);
\filldraw[blue!15!white,draw=black] (2,1) rectangle (3,2);
\filldraw[blue!15!white,draw=black] (2,2) rectangle (3,3);
\filldraw[blue!15!white,draw=black] (2,3) rectangle (3,4);
\filldraw[blue!15!white,draw=black] (3,1) rectangle (4,2);
\filldraw[blue!15!white,draw=black] (3,2) rectangle (4,3);
\filldraw[blue!15!white,draw=black] (-5,1) rectangle (-4,2);
\filldraw[blue!15!white,draw=black] (-5,2) rectangle (-4,3);
\filldraw[blue!15!white,draw=black] (-4,0) rectangle (-3,1);
\filldraw[blue!15!white,draw=black] (-4,1) rectangle (-3,2);
\filldraw[blue!15!white,draw=black] (-4,2) rectangle (-3,3);
\filldraw[blue!15!white,draw=black] (-4,3) rectangle (-3,4);
\filldraw[blue!15!white,draw=black] (-3,0) rectangle (-2,1);
\filldraw[blue!15!white,draw=black] (-3,1) rectangle (-2,2);
\filldraw[blue!15!white,draw=black] (-3,2) rectangle (-2,3);
\filldraw[blue!15!white,draw=black] (-3,3) rectangle (-2,4);
\filldraw[blue!15!white,draw=black] (-3,4) rectangle (-2,5);
\filldraw[blue!15!white,draw=black] (-2,0) rectangle (-1,1);
\filldraw[blue!15!white,draw=black] (-2,1) rectangle (-1,2);
\filldraw[blue!15!white,draw=black] (-2,2) rectangle (-1,3);
\filldraw[blue!15!white,draw=black] (-2,3) rectangle (-1,4);
\filldraw[blue!15!white,draw=black] (-2,4) rectangle (-1,5);
\filldraw[blue!15!white,draw=black] (-1,0) rectangle (0,1);
\filldraw[blue!15!white,draw=black] (-1,1) rectangle (0,2);
\filldraw[blue!15!white,draw=black] (-1,2) rectangle (0,3);
\filldraw[blue!15!white,draw=black] (-1,3) rectangle (0,4);
\filldraw[blue!15!white,draw=black] (0,0) rectangle (1,1);
\filldraw[blue!15!white,draw=black] (0,1) rectangle (1,2);
\filldraw[blue!15!white,draw=black] (0,2) rectangle (1,3);
\filldraw[blue!15!white,draw=black] (0,3) rectangle (1,4);
\minorat{11}{}{(1,0)}
\minorat{11}{3/9}{(1,1)}
\minorat{11}{4/8,3/9}{(1,2)}
\minorat{11}{4/8,3/9,2/10}{(1,3)}
\minorat{11}{5/7,4/8,3/9,2/10}{(1,4)}
\minorat{11}{5/7,4/8,3/9,2/10,1/11}{(1,5)}
\minorat{11}{}{(2,0)}
\minorat{11}{4/9}{(2,1)}
\minorat{11}{4/9,3/10}{(2,2)}
\minorat{11}{5/8,4/9,3/10}{(2,3)}
\minorat{11}{5/8,4/9,3/10,2/11}{(2,4)}
\minorat{11}{4/10}{(3,1)}
\minorat{11}{5/9,4/10}{(3,2)}
\minorat{11}{5/9,4/10,3/11}{(3,3)}
\minorat{11}{5/10,4/11}{(4,2)}
\minorat{11}{1/5,11/6}{(-5,2)}
\minorat{11}{1/6}{(-4,1)}
\minorat{11}{1/6,11/7}{(-4,2)}
\minorat{11}{2/5,1/6,11/7}{(-4,3)}
\minorat{11}{}{(-3,0)}
\minorat{11}{1/7}{(-3,1)}
\minorat{11}{2/6,1/7}{(-3,2)}
\minorat{11}{2/6,1/7,11/8}{(-3,3)}
\minorat{11}{3/5,2/6,1/7,11/8}{(-3,4)}
\minorat{11}{}{(-2,0)}
\minorat{11}{2/7}{(-2,1)}
\minorat{11}{2/7,1/8}{(-2,2)}
\minorat{11}{3/6,2/7,1/8}{(-2,3)}
\minorat{11}{3/6,2/7,1/8,11/9}{(-2,4)}
\minorat{11}{4/5,3/6,2/7,1/8,11/9}{(-2,5)}
\minorat{11}{}{(-1,0)}
\minorat{11}{2/8}{(-1,1)}
\minorat{11}{3/7,2/8}{(-1,2)}
\minorat{11}{3/7,2/8,1/9}{(-1,3)}
\minorat{11}{4/6,3/7,2/8,1/9}{(-1,4)}
\minorat{11}{}{(0,0)}
\minorat{11}{3/8}{(0,1)}
\minorat{11}{3/8,2/9}{(0,2)}
\minorat{11}{4/7,3/8,2/9}{(0,3)}
\minorat{11}{4/7,3/8,2/9,1/10}{(0,4)}
\end{tikzpicture}
\]

\section{Question}

Is there a decision-procedure which on any response matrix makes at most
$\binom{n}{2}$ positivity tests and determines the strand
matching for which all the conductances are positive?

\newcommand{\MRhref}[2]{\href{http://www.ams.org/mathscinet-getitem?mr=#1}{MR#1}}
\def\@rst #1 #2other{#1}
\newcommand\MR[1]{\relax\ifhmode\unskip\spacefactor3000 \space\fi
  \MRhref{\expandafter\@rst #1 other}{#1}}

\newcommand{\arXiv}[1]{\href{http://arxiv.org/abs/#1}{arXiv:#1}}
\newcommand{\arxiv}[1]{\href{http://arxiv.org/abs/#1}{#1}}

\bibliographystyle{siamplain}
\bibliography{dets}

\begin{thebibliography}{10}

\bibitem{ALT}
{\sc J.~Alman, C.~Lian, and B.~Tran}, {\em Circular planar electrical networks:
  posets and positivity},
  \href{http://dx.doi.org/10.1016/j.jcta.2014.11.004}{J. Combin.\ Theory
  Ser.~A, 132 (2015), pp.~58--101}.

\bibitem{Borceaetal}
{\sc L.~Borcea, V.~Druskin, and F.~Guevara~Vasquez}, {\em Electrical impedance
  tomography with resistor networks},
  \href{http://dx.doi.org/10.1088/0266-5611/24/3/035013}{Inverse Problems, 24
  (2008), pp.~035013, 31}.

\bibitem{card-muranaka}
{\sc R.~K. Card and B.~I. Muranaka}, {\em Using network amalgamation and
  separation to solve the inverse problem}.
\newblock \url{http://www.math.washington.edu/~morrow/papers/ryanc.pdf}.

\bibitem{CS}
{\sc G.~D. Carroll and D.~Speyer}, {\em The cube recurrence},
  \href{http://www.combinatorics.org/Volume_11/Abstracts/v11i1r73.html}{Electron.\
  J. Combin., 11 (2004), pp.~Research Paper 73, 31 pp.}
\newblock \arxiv{math/0403417}.

\bibitem{CdV}
{\sc Y.~Colin~de Verdi{\`e}re}, {\em R\'eseaux \'electriques planaires. {I}},
  \href{http://dx.doi.org/10.1007/BF01585564}{Comment.\ Math.\ Helv., 69
  (1994), pp.~351--374}.

\bibitem{CGV}
{\sc Y.~Colin~de Verdi{\`e}re, I.~Gitler, and D.~Vertigan}, {\em R\'eseaux
  \'electriques planaires. {II}},
  \href{http://dx.doi.org/10.1007/BF02566413}{Comment.\ Math.\ Helv., 71
  (1996), pp.~144--167}.

\bibitem{cmm}
{\sc E.~Curtis, E.~Mooers, and J.~Morrow}, {\em Finding the conductors in
  circular networks from boundary measurements}, RAIRO Mod\'el.\ Math.\ Anal.\
  Num\'er., 28 (1994), pp.~781--814.

\bibitem{CIM}
{\sc E.~B. Curtis, D.~Ingerman, and J.~A. Morrow}, {\em Circular planar graphs
  and resistor networks},
  \href{http://dx.doi.org/10.1016/S0024-3795(98)10087-3}{Linear Algebra Appl.,
  283 (1998), pp.~115--150}.

\bibitem{curtis-morrow}
{\sc E.~B. Curtis and J.~A. Morrow}, {\em
  \href{http://dx.doi.org/10.1142/4306}{Inverse Problems for Electrical
  Networks}}, Series on Applied Mathematics, World Scientific, 2000.

\bibitem{EKLP1}
{\sc N.~Elkies, G.~Kuperberg, M.~Larsen, and J.~Propp}, {\em Alternating-sign
  matrices and domino tilings. {I}},
  \href{http://dx.doi.org/10.1023/A:1022420103267}{J. Algebraic Combin., 1
  (1992), pp.~111--132}.

\bibitem{fayers}
{\sc M.~Fayers}, {\em Dyck tilings and the homogeneous {G}arnir relations for
  graded {S}pecht modules}.
\newblock \arXiv{1309.6467}.

\bibitem{fisher-nadeau}
{\sc I.~Fischer and P.~Nadeau}, {\em Fully packed loops in a triangle:
  matchings, paths and puzzles},
  \href{http://dx.doi.org/10.1016/j.jcta.2014.10.008}{J. Combin. Theory Ser. A,
  130 (2015), pp.~64--118}.
\newblock \arXiv{1209.1262}.

\bibitem{MR1745560}
{\sc S.~Fomin and A.~Zelevinsky}, {\em Total positivity: tests and
  parametrizations}, \href{http://dx.doi.org/10.1007/BF03024444}{Math.\
  Intelligencer, 22 (2000), pp.~23--33}.

\bibitem{GK}
{\sc A.~B. Goncharov and R.~Kenyon}, {\em Dimers and cluster integrable
  systems}, Ann.\ Sci.\ \'Ec.\ Norm.\ Sup\'er.\ (4), 46 (2013), pp.~747--813.
\newblock \arXiv{1107.5588}.

\bibitem{huang-wen}
{\sc Y.~Huang and C.~Wen}, {\em {ABJM} amplitudes and the positive orthogonal
  {G}rassmannian}, \href{http://dx.doi.org/10.1007/jhep02(2014)104}{J. High
  Energy Physics, 2014 (2014), pp.~1--51}.
\newblock \arXiv{1309.3252}.

\bibitem{huang-wen-xie}
{\sc Y.-T. Huang, C.~Wen, and D.~Xie}, {\em The positive orthogonal
  {G}rassmannian and loop amplitudes of {ABJM}},
  \href{http://dx.doi.org/10.1088/1751-8113/47/47/474008}{J. Phys.~A, 47
  (2014), pp.~474008, 48}.
\newblock \arXiv{1402.1479}.

\bibitem{johnson}
{\sc W.~Johnson}, {\em Circular planar resistor networks with nonlinear and
  signed conductors}.
\newblock \arXiv{1203.4045}.

\bibitem{kenyon:surfaces}
{\sc R.~Kenyon}, {\em The {L}aplacian on planar graphs and graphs on surfaces},
  in \href{http://dx.doi.org/10.4310/cdm.2011.v2011.n1.a1}{Current developments
  in mathematics, 2011}, Int.\ Press, 2012, pp.~1--55.
\newblock \arXiv{1203.1256}.

\bibitem{KW2}
{\sc R.~W. Kenyon and D.~B. Wilson}, {\em Combinatorics of tripartite boundary
  connections for trees and dimers},
  \href{http://www.combinatorics.org/Volume_16/Abstracts/v16i1r112.html}{Electron.\
  J. Combin., 16 (2009), pp.~Research Paper 112, 28}.

\bibitem{KW1}
{\sc R.~W. Kenyon and D.~B. Wilson}, {\em Boundary partitions in trees and
  dimers}, \href{http://dx.doi.org/10.1090/S0002-9947-2010-04964-5}{Trans.\
  Amer.\ Math.\ Soc., 363 (2011), pp.~1325--1364}.

\bibitem{KW3}
{\sc R.~W. Kenyon and D.~B. Wilson}, {\em Double-dimer pairings and skew
  {Y}oung diagrams},
  \href{http://www.combinatorics.org/Volume_18/Abstracts/v18i1p130.html}{Electron.\
  J. Combin., 18 (2011), pp.~Paper 130, 22}.

\bibitem{kw:annular}
{\sc R.~W. Kenyon and D.~B. Wilson}, {\em Spanning trees of graphs on surfaces
  and the intensity of loop-erased random walk on planar graphs},
  \href{http://dx.doi.org/10.1090/S0894-0347-2014-00819-5}{J. Amer.\ Math.\
  Soc., 28 (2015), pp.~985--1030}.
\newblock \arXiv{1107.3377}.

\bibitem{kim-lee}
{\sc J.~Kim and S.~Lee}, {\em Positroid stratification of orthogonal
  {G}rassmannian and {ABJM} amplitudes},
  \href{http://dx.doi.org/10.1007/jhep09(2014)085}{J. High Energy Physics, 2014
  (2014), pp.~1--31}.
\newblock \arXiv{1402.1119}.

\bibitem{kim}
{\sc J.~S. Kim}, {\em Proofs of two conjectures of {K}enyon and {W}ilson on
  {D}yck tilings}, \href{http://dx.doi.org/10.1016/j.jcta.2012.05.008}{J.
  Combin.\ Theory Ser.~A, 119 (2012), pp.~1692--1710}.
\newblock \arXiv{1108.5558}.

\bibitem{KMPW}
{\sc J.~S. Kim, K.~M{\'e}sz{\'a}ros, G.~Panova, and D.~B. Wilson}, {\em Dyck
  tilings, increasing trees, descents, and inversions},
  \href{http://dx.doi.org/10.1016/j.jcta.2013.09.008}{J. Combin.\ Theory
  Ser.~A, 122 (2014), pp.~9--27}.
\newblock \arXiv{1205.6578}.

\bibitem{Kuo}
{\sc E.~H. Kuo}, {\em Applications of graphical condensation for enumerating
  matchings and tilings},
  \href{http://dx.doi.org/10.1016/j.tcs.2004.02.022}{Theoret.\ Comput.\ Sci.,
  319 (2004), pp.~29--57}.

\bibitem{lai:semicontiguous}
{\sc T.~Lai}, {\em Proof of a conjecture of {K}enyon and {W}ilson on
  semicontiguous minors}.
\newblock \arXiv{1507.02611}.

\bibitem{lam}
{\sc T.~Lam}, {\em Electroid varieties and a compactification of the space of
  electrical networks}.
\newblock \arXiv{1402.6261}.

\bibitem{lam:eulerian}
{\sc T.~Lam}, {\em The uncrossing partial order on matchings is {E}ulerian},
  \href{http://dx.doi.org/10.1016/j.jcta.2015.04.004}{J. Combin.\ Theory
  Ser.~A, 135 (2015), pp.~105--111}.
\newblock \arXiv{1406.5671}.

\bibitem{LP}
{\sc T.~Lam and P.~Pylyavskyy}, {\em Electrical networks and {L}ie theory},
  \href{http://dx.doi.org/10.2140/ant.2015.9.1401}{Algebra Number Theory, 9
  (2015), pp.~1401--1418}.
\newblock \arXiv{1103.3475}.

\bibitem{lam-williams}
{\sc T.~Lam and L.~Williams}, {\em Total positivity for cominuscule
  {G}rassmannians}, \href{http://nyjm.albany.edu:8000/j/2008/14_53.html}{New
  York J. Math., 14 (2008), pp.~53--99}.
\newblock \arXiv{0710.2932}.

\bibitem{russell}
{\sc J.~T. Russell}, {\em {$\star$} and {$\mathcal K$} solve the inverse
  problem}.
\newblock
  \url{http://www.math.washington.edu/~reu/papers/2003/russell/recovery.pdf}.

\bibitem{MR2927185}
{\sc K.~Shigechi and P.~Zinn-Justin}, {\em Path representation of maximal
  parabolic {K}azhdan-{L}usztig polynomials},
  \href{http://dx.doi.org/10.1016/j.jpaa.2012.03.027}{J. Pure Appl.\ Algebra,
  216 (2012), pp.~2533--2548}.
\newblock \arXiv{1001.1080}.

\bibitem{zhang}
{\sc F.~Zhang}, ed., {\em \href{http://dx.doi.org/10.1007/b105056}{The {S}chur
  complement and its applications}}, Numerical Methods and Algorithms~\#4,
  Springer-Verlag, 2005.

\end{thebibliography}

\end{document}